\title{Icosahedral invariants and Shimura curves}
\author{Atsuhira Nagano}
\def\bigzerou{\smash{\lower1.7ex\hbox{\b 0}}}
\newtheorem{thm}{Theorem}[section]
\newtheorem{df}{Definition}[section]
\newtheorem{lem}{Lemma}[section]
\newtheorem{prop}{Proposition}[section]
\newtheorem{rem}{Remark}[section]
\newtheorem{exap}{Example}[section]
\def\comment#1{{ }}
\begin{document}

\maketitle

\begin{abstract}

Shimura curves are  moduli spaces of abelian surfaces with quaternion multiplication.
Models of Shimura curves are very important in number theory.
Klein's icosahedral invariants $\mathfrak{A},\mathfrak{B}$ and $\mathfrak{C}$ give the Hilbert modular forms for $\sqrt{5}$ via the period mapping for a family of $K3$ surfaces.
Using the period mappings for several families of $K3$ surfaces, we obtain explicit models of Shimura curves with small discriminant in the weighted projective space ${\rm Proj} (\mathbb{C}[\mathfrak{A},\mathfrak{B},\mathfrak{C}])$.
 
\end{abstract}

\footnote[0]{Keywords:  $K3$ surfaces ; Abelian surfaces ; Shimura curves ;  Hilbert modular functions ; quaternion algebra.  }
\footnote[0]{Mathematics Subject Classification 2010:  Primary 11F46; Secondary 14J28, 14G35, 11R52.}
\footnote[0]{Running head: Icosahedral invariants and Shimura curves}
\setlength{\baselineskip}{14 pt}

\section*{Introduction}
This paper gives an application of the moduli theory of $K3$ surfaces to the number theory.
We obtain an explicit relation among   abelian surfaces with quaternion multiplication,   Hilbert modular functions and periods of $K3$ surfaces.

The moduli spaces for principally polarized abelian surfaces  determined by the structure of  the ring of endomorphisms are very important in number theory (\cite{Geer} Chapter IX Proposition (1.2), see also Table 1).
In this paper, we study the moduli spaces of principally polarized abelian surfaces with quaternion multiplication (For the detailed definition, see Section 2.2).
They are called Shimura curves.
\begin{table}[h]
\center
\begin{tabular}{cccc}
\toprule
Abelian surface $A$ & ${\rm End}_0 (A)$ & Moduli Space   \\
 \midrule
 Generic  & Rational field & Igusa $3$-fold $\mathcal{A}_2$ \\ 
Real multiplication & Real quadratic field & Humbert surface $\mathcal{H}$    \\
Quaternion multiplication & Quaternion algebra& Shimura curve $\mathcal{S}$  \\
Complex multiplication & CM field &  CM points  \\
 \bottomrule
\end{tabular}
\caption{ The moduli spaces of abelian surfaces }
\end{table}

To the best of the author's knowledge, to obtain explicit models of Shimura curves  is a non trivial problem because Shimura curves have no cusps.
In this paper, we shall obtain new models of  Shimura curves for quaternion algebras with small discriminant.
We  consider the weighted projective space $\mathbb{P}(1:3:5)={\rm Proj} (\mathbb{C}[\mathfrak{A},\mathfrak{B},\mathfrak{C}]),$ where $\mathfrak{A},\mathfrak{B} $ and $\mathfrak{C}$ are Klein's icosahedral invariants of weight $1,3$ and $5$ respectively.
We shall give the explicit defining equations of the Shimura curves for small discriminant  in $ {\rm Proj}(\mathbb{C}[\mathfrak{A},\mathfrak{B}, \mathfrak{C}]) $.

Here, let us see  the reason why we consider the icosahedral invariants.
The moduli space $\mathcal{H}_\Delta $ of principally polarized abelian surfaces with real multiplication by $\mathcal{O}_\Delta$ is called the Humbert surface (for detail, see Section 1.1).
The Humbert surface $\mathcal{H}_\Delta$ is uniformized by Hilbert modular functions for $\Delta$.
Among Humbert surfaces, the case for $\mathbb{Q}(\sqrt{5})$ is the simplest,  since its discriminant is the smallest.
In \cite{NaganoTheta},  we studied the family  $\mathcal{F}=\{ S(\mathfrak{A}:\mathfrak{B}:\mathfrak{C})\}$  of elliptic $K3$ surfaces. 
We can regard $\mathcal{F}$ as a family parametrized over $\mathcal{H}_5$.

By the way, the Igusa 3-fold $\mathcal{A}_2$ is the moduli space of principally polarized abelian surfaces.
The family  $\mathcal{F}_{CD}=\{S_{CD}(\alpha:\beta:\gamma:\delta)\}$ of $K3 $ surfaces for $(\alpha:\beta:\gamma:\delta)\in \mathbb{P}(2:3:5:6)$  is studied by  Kumar \cite{Kumar}, Clingher and Doran \cite{CD} and \cite{NaganoShiga}.
 This family $\mathcal{F}_{CD}$ is parametrized over $\mathcal{A}_2.$ 
Our family $\mathcal{F}$ can be regarded as  a subfamily of $\mathcal{F}_{CD}$. 
However,  it is not apparent to  describe the embedding $\mathcal{F} \hookrightarrow \mathcal{F}_{CD}$ explicitly.
Our first result of this paper is to obtain the embedding $\Psi_5 : \mathbb{P}(1:3:5) \hookrightarrow \mathbb{P}(2:3:5:6)$ of the parameter spaces (see Theorem \ref{PsiThm}).

A Shimura curve $\mathcal{S}$ is a $1$-dimensional subvariety of  $\mathcal{A}_2.$
In several cases,  $\mathcal{S}$ is contained in the image  $\Psi_5(\mathbb{P}(1:3:5))$ and the pull-back $\Psi_5^*(\mathcal{S})$  is a curve in $\mathbb{P}(1:3:5)$.
In this paper, we obtain models of $\Psi_5^*(\mathcal{S})$ for such cases.
We note that $\mathcal{S}$ and $\Psi_5^*(\mathcal{S})$ are isomorphic as varieties.
Good modular properties of  ${\rm Proj}(\mathbb{C}[\mathfrak{A},\mathfrak{B}, \mathfrak{C}])$ enable us to study $\Psi_5^*(\mathcal{S})$ effectively.
Also, our study is  based on the results  of quaternion algebras due to Hashimoto \cite{Hashimoto} 
and  elliptic $K3$ surfaces due to  Elkies and Kumar \cite{EK}.

As a result, we  obtain the following explicit defining equations for the Shimura curves $\Psi_5^*(\mathcal{S})$ for  discriminant $6,10,14$ and $15$. 
Setting $\displaystyle X=\frac{\mathfrak{B}}{\mathfrak{A}^3},Y=\frac{\mathfrak{C}}{\mathfrak{A}^5},$ $(X,Y)$ are affine coordinates of $\mathcal{H}_5$.
We have the models (see Theorem \ref{ShimuraModular}, \ref{ThmS6S10},  \ref{Thm5,12} and \ref{ThmS14S15}):
\begin{align*}
\begin{cases}
&\Psi_5^*(\mathcal{S}_6): 3125  - 3375 X + 243 Y =0,\\
&\Psi_5^*(\mathcal{S}_{10}): 1 - 5  X + Y =0,\\
&\Psi_5^*(\mathcal{S}_{14}):30517578125  + 911865234375  X + 42529296875  X^2 -  97897974609375  X^3 \notag \\
  & \quad \quad\quad \quad + 424490000000000  X^4 - 
  345600000000000 X^5 + 2383486328125  Y \notag \\
&\quad \quad\quad \quad + 
  32875975781250  X Y - 147816767984375  X^2 Y + 
  228155760000000  X^3 Y \notag \\
  &\quad \quad\quad \quad + 19189204671875  Y^2 - 
  29675018141125  X Y^2 + 344730881243 Y^3=0,
\\
&\Psi_5^*(\mathcal{S}_{15}):30517578125  + 911865234375  X + 42529296875  X^2 - 
  97897974609375  X^3 \notag \\
  & \quad \quad \quad \quad+ 424490000000000 X^4 - 
  345600000000000 X^5 + 2383486328125 Y=0.
\end{cases}
\end{align*}

Some researchers obtained models of Shimura curves (for example, Kurihara \cite{Kurihara}, Hashimoto and Murabayashi \cite{HM}, 
Besser \cite{Besser}, Elkies \cite{Elkies1}, \cite{Elkies2}, Kohel and Verrill \cite{KohelVerrill}, Voight \cite{Voight}, Bonfanti and van Geemen \cite{BG}).
In comparison with already known models, our new models have the following features.

\begin{itemize}

\item
They are closely related to the classical invariant theory.
Namely, our coordinates $\mathfrak{A},\mathfrak{B},\mathfrak{C}$  of the common ambient space of Shimura curves are coming from Klein's icosahedral invariants.
Especially,  the Shimura curves for  discriminant $6$ and $10$ have very simple forms.
These two curves are just lines touching the locus of Klein's icosahedral equation (see Figure 5).

\item
The moduli  of our family $\mathcal{F}$ of $K3$ surfaces were studied in detail.
We have an explicit  expression of the period mapping (\cite{NaganoTheta},\cite{NaganoKummer})  and the Gauss-Manin connection (\cite{NaganoPDE}) for $\mathcal{F}$.
 These properties are very useful to study  Shimura curves effectively (for example, see the proof of Theorem   \ref{ThmS6S10}).

\item
Shimura (\cite{S97}) studied unramified class fields over CM fields of certain types.
In \cite{NaganoCM}, an explicit construction of such class fields over quartic CM fields using the special values of $X$ and $Y$ is given.

\item
In fact, our $K3$ surface $S(\mathfrak{A}:\mathfrak{B}:\mathfrak{C})$ is a toric hypersurface.
To study the mirror symmetry for toric $K3$ hypersurfaces is an interesting problem in recent geometry and physics.
In \cite{HNU}, our $S(\mathfrak{A}:\mathfrak{B}:\mathfrak{C})$ are studied from the viewpoint of mirror symmetry.
Especially, our parameters $X $ and $Y$ are directly related  to the secondary stack for the  toric $K3$ hypersurface.

\end{itemize}
Thus,  our new models of Shimura curves  are naturally related to various topics.

\section{Moduli of principally polarized abelian surfaces}

\subsection{ Principally polarized abelian surfaces with real multiplication}

Let $\mathfrak{S}_2$ be the Siegel upper half plane of rank $2$.
Let us consider a principally polarized abelian variety $(A,\Theta)$ with the theta divisor $\Theta$ and the period matrix $(\Omega, I_2)$, where $\Omega \in \mathfrak{S}_2$.
The symplectic group $Sp(4,\mathbb{Z})$ acts on $\mathfrak{S}_2.$
The quotient space $Sp(4,\mathbb{Z}) \backslash \mathfrak{S}_2$ gives the moduli space $\mathcal{A}_2$ of principally polarized abelian varieties.
This is called the Igusa $3$-fold.

The ring of endomorphisms is given by
$
{\rm End}(A)=\{a \in M_2(\mathbb{C}) |   a (\Omega,I_2)=(\Omega,I_2)M  \text{ for some } M\in M(4,\mathbb{Z}) \}.
$
The principal polarization given by $\Theta$ induces the alternating Riemann form $E(z,w)$.
Set ${\rm End}_0 (A)={\rm End}(A) \otimes _\mathbb{Z} \mathbb{Q}$.
If $A $ is a simple abelian variety, then ${\rm End}_0(A)$ is a division algebra. 
The Rosati involution $a \mapsto a^\circ $ is an involution on  ${\rm End}_0(A)$   
and gives an adjoint of the alternating Riemann form: 
$
E(a z ,w)=E(z,a^\circ w).
$
Note that the Rosati involution satisfies ${\rm Tr}(a a^\circ)>0$.

 A point $\Omega=\begin{pmatrix} \tau_1 & \tau_2 \\ \tau_2  & \tau_3 \end{pmatrix}\in \mathfrak{S}_2$ is said to have the singular relation with the invariant $\Delta$ 
if there exist relatively prime integers $a,b,c,d$ and $e$ such that the following equations hold:
\begin{align}\label{singularR}
 a \tau_1 + b \tau_2 + c \tau_3 + d (\tau_2^2 -\tau_1 \tau_3) +e =0, \quad
 \Delta = b^2 - 4 a c - 4 d e
\end{align}

\begin{df}\label{HumbertDf}
Set $N_{\Delta} =\{  \tau \in \mathfrak{S}_2 |  \tau \text{ has a  singular relation with } \Delta \}.$
The image of $N_\Delta$ under the canonical mapping $ \mathfrak{S}_2 \rightarrow  Sp(4,\mathbb{Z})\backslash \mathfrak{S}_2$ is called the Humbert surface of invariant $\Delta$
\end{df}

Let $\mathcal{O}_\Delta$ be the ring of integers of the field $\mathbb{Q}(\sqrt{\Delta})$. 
The Humbert surface of invariant $\Delta$ gives the moduli space of principally polarized abelian surfaces $(A,\Theta)$ with $\mathcal{O}_\Delta \subset {\rm End} (A)$ and $\mathbb{Q}(\sqrt{\Delta})\cap {\rm End}(A)=\mathcal{O}_\Delta$. 
Such an abelian surface is said to have  real multiplication by $\mathcal{O}_\Delta$ (see \cite{Geer} or \cite{Hashimoto}).

\subsection{Quaternion multiplication and Shimura curves}

In this subsection, we recall the properties of Shimura curves. For detail, see    \cite{Rotger}, \cite{Rotger2} or   \cite{Vigneras}.

Let $B$  be an indefinite quaternion algebra over $\mathbb{Q}$ with $B\not \simeq M_2(\mathbb{Q})$.
We have an isomorphism $B\otimes _\mathbb{Q} \mathbb{R} \simeq M_2(\mathbb{R})$.
Let $p_1,\cdots ,p_t$ be the distinct primes at which $B$ ramifies.
We can show  that $t\in 2\mathbb{Z}$.
The number 
$D=p_1 \cdots p_t$ is called the discriminant of $B$.
Two quaternion algebras $B$ and $B'$ are isomorphic as $\mathbb{Q}$-algebras 
if and only if the discriminant of $B$ coincides with that of $B'$.

For $\alpha\in B$,
let $\alpha \mapsto \alpha'$ be the canonical involution  defined by $\alpha'={\rm Tr}_{B/\mathbb{Q}}(\alpha)-\alpha$.
An element $\alpha\in B $ is called integral if both ${\rm Tr_{B/\mathbb{Q}} (\alpha)}$ and ${\rm Nr_{B/\mathbb{Q}} (\alpha)}$ are in $\mathbb{Z}$.
If a subring $\mathfrak{O} (\subset B)$ of integral elements is a finitely generated  $\mathbb{Z}$-module  of $B$  and satisfies $\mathbb{Q} \mathfrak{O}=B$,
we call $\mathfrak{O}$ an order of $B$. 
A maximal order is an order that is maximal under inclusion.
We note that a maximal order in $B$  is unique up to conjugation.

For a maximal order $\mathfrak{O}$ in $B$ with  discriminant $D$,
we put
$
\Gamma^{(1)} =\{\gamma\in \mathfrak{O}| {\rm Nr}_{B/\mathbb{Q}}(\gamma)=1\}.
$
The group $\Gamma^{(1)}$ gives a discrete subgroup of $SL(2,\mathbb{R})$.
For  $\rho\in \mathfrak{O}$ satisfying $\rho^2<0$, 
we have an involution  given by
$
i_\rho: \alpha \mapsto \rho^{-1} \alpha' \rho.
$
For $\xi,\eta \in B$, put $E_\rho(\xi,\eta)={\rm Tr}(\rho \xi \eta').$
The pairing $E_\rho$ gives a skew symmetric form on $B$.
Moreover, we can show that
for $\gamma \in \Gamma^{(1)}$ we have
\begin{align}\label{norm1E}
E_\rho(\xi \gamma,\eta\gamma)= E_\rho(\xi,\eta).
\end{align}

\begin{df}\label{QMDef}
Take $\rho\in \mathfrak{O}$ such that $\rho^2 =-D  $ and $\rho \mathfrak{O}=\mathfrak{O} \rho$.
We call $(A,\Theta,\iota)$ 
a principally polarized abelian surface
with quaternion multiplication by
 $(\mathfrak{O},
 i_\rho)$ 
if
 $\iota:\mathfrak{O}\hookrightarrow {\rm End}(A) $ and 
 $\iota(\alpha)\mapsto \iota \circ i_\rho(\alpha)$ coincides with 
 the Rosati involution $\iota(\alpha) \mapsto \iota(\alpha)^\circ$.
\end{df}

The quotient space $\Gamma^{(1)} \backslash \mathbb{H}$  for $B$ of discriminant $D$  
is already a compact Riemann surface.
This is called the Shimura curve $\mathbb{S}_D$ for $B_D$.
Shimura 
proved that $\mathbb{S}_D$ is isomorphic to the moduli space of $(A,\Theta,\iota)$ with quaternion multiplication by $(\mathfrak{O},i_\rho)$.
Such Shimura curves were firstly studied in \cite{S67}, \cite{S70} and \cite{S75}.
There exists a quaternion modular embedding (see the following diagram):
\begin{align}\label{Phi1}
\Omega
:\quad \mathbb{H} \rightarrow \mathfrak{S}_2; \quad w \mapsto \Omega(w).
\end{align}

\vspace{-5mm}
\begin{align*}
\begin{CD}
\mathbb{H} @> \Omega >>\mathfrak{S}_2\\
@V  \Gamma^{(1)} VV @VV  Sp(4,\mathbb{Z})V\\
\mathbb{H} @>> \Omega > \mathfrak{S}_2   
 \end{CD}
\end{align*}

We note that the Shimura curve $\mathbb{S}_D$ does not be embedded in the Igusa $3$-fold $\mathcal{A}_2$.
There exist a projection $\varphi_D:\mathbb{S}_D\rightarrow \mathcal{S}_D \subset \mathcal{A}_2$ and
a group $W_D \subset {\rm Aut}(\mathbb{S}_D)$ such that  $\mathcal{S}_D$ is birationally equivalent to $\mathbb{S}_D/W_D$.
We note that
$\varphi_D$ is  either generically $2$ to $1$ or generically  $4$ to $1$. 
In this paper, we also call the curve $\mathcal{S}_D$ the Shimura curve.
The curve $\mathcal{S}_D$ gives isomorphism classes of $(A,\Theta)$.
The above projection $\varphi_D$ is coming from  the forgetful mapping $(A,\Theta,\iota)\mapsto (A,\Theta)$.
For detail, see \cite{Rotger} and \cite{Rotger2}.

\begin{rem}
The moduli spaces $\mathcal{S}_D$
are also called  Shimura curves in several works (for example, see Hashimoto-Murabayashi \cite{HM} or Yang \cite{Yang}). 
\end{rem}

\begin{rem}\label{RemRot}
The choice of the mapping 
$
 \varphi_D :  \mathbb{S}_D  \rightarrow  \mathcal{A}_2
$
is not unique 
and 
depends on  the principal polarizations on the corresponding abelian surfaces.
The number of the choice of $\varphi$ is calculated  by Rotger \cite{Rotger}.
Especially, if $D=6,10,15$,  then the choice of $\varphi_D$ is unique. 
Moreover, if $D=6,10,15$,  then $\varphi_D$ is a $4$ to $1$ mapping (\cite{Rotger2}). 
\end{rem}

\subsection{The result of Hashimoto}

In this subsection, we review the result of Hashimoto \cite{Hashimoto}.

Letting $B$ be an indefinite quaternion algebra with  discriminant $D=p_1\cdots p_t$
and $\mathfrak{O}$ be a maximal order of $B$,
take a prime number $p$ such that $p \equiv 5 $ $({\rm mod} \hspace{1mm} 8)$ and $\displaystyle \Big(\frac{p}{p_j} \Big)=-1$ for $p_j\not=2$.
Here, $\displaystyle \Big(\frac{a}{b}\Big)$ denotes the Legendre symbol.
We can assume that $B$ is expressed as
$
B=\mathbb{Q} + \mathbb{Q} i + \mathbb{Q} j + \mathbb{Q} ij
$
with
$
i^2=-D, j^2= p,  ij=-ji.
$
In this subsection, 
we consider the quaternion multiplication by $(\mathfrak{O},i_\rho)$ 
for  $\rho=i^{-1}=\displaystyle \frac{-i}{D}$.

\begin{rem}
We note  that $\rho$ is not always an element of $\mathfrak{O}$.
Nevertheless,
the notation in Definition \ref{QMDef} is available for $\rho=i^{-1}$.
In fact, 
$i\in \mathfrak{O}$ satisfies $i^2=-D$, $i\mathfrak{O} = \mathfrak{O} i$
and
$
\iota_i (\alpha )= i^{-1} \alpha'  i=  i \alpha' i^{-1}=\iota_{i^{-1}}(\alpha) $ $ (\alpha  \in B).
$
\end{rem}

Take $a,b\in\mathbb{Z}$ such that
$
a^2 D + 1 = p b.
$
We have 
a basis $\eta=\{\eta_1,\eta_2,\eta_3,\eta_4\}$ of $\mathfrak{O}$ 
is given by
$
 \eta_1= \frac{i +ij}{2} -\frac{p-1}{2} \frac{a D j + ij}{p}, \eta_2= -aD -\frac{a D j + ij}{p}, \eta_3=1, \eta_4=\frac{1+j}{2}.
$
We can see that
\begin{align}\label{symplecticB}
(E_\rho (\eta_j,\eta_k))=J=\begin{pmatrix} 0 & I_2 \\ -I_2 & 0 \end{pmatrix}.
\end{align}
For $\gamma\in \Gamma^{(1)}$, due to (\ref{norm1E}) and (\ref{symplecticB}),
$\eta_1 \gamma,\cdots,\eta_4\gamma$ gives another symplectic basis of $\mathfrak{O}$ with respect to $E_\rho$.
Hence,   there exists $M_\gamma \in Sp(4,\mathbb{Z})$ such that
$
(\eta_1\gamma,\cdots, \eta_4\gamma)=(\eta_1,\cdots,\eta_4) {}^tM_\gamma.
$
for any $\gamma\in\Gamma^{(1)}$.
For $w\in \mathbb{H}$, we set an $\mathbb{R}$-linear isomorphism
$f_w:B\otimes_\mathbb{Q} \mathbb{R}\simeq M_2(\mathbb{R}) \rightarrow \mathbb{C}^2$ given by
$
\alpha \mapsto \alpha \begin{pmatrix} w \\1 \end{pmatrix}.
$
Put $\omega_j =f_w(\eta_j) \in \mathbb{C}^2$ $(j=1,\cdots,4)$.
Then, $\Lambda_w=f_w(\mathfrak{O})=\langle  \omega_1,\cdots, \omega_4\rangle_\mathbb{Z}$ gives a lattice in $\mathbb{C}^2$
and $\mathbb{C}^2/\Lambda_w$ gives a complex torus with the period matrix $(\omega_1, \omega_2, \omega_3, \omega_4)=(\Omega_1(w)\Omega_2(w))$.
Then, $E_x:\mathbb{C}\times\mathbb{C} \rightarrow \mathbb{R}$ given by
$
E_w (f_w(\xi),f_w(\eta))=-E_\rho (\xi,\eta)
$
 induces a non-degenerate  skew symmetric pairing $E_w:\Lambda_w\times\Lambda_w \rightarrow \mathbb{Z}$. This gives an alternating Riemann form on the complex tours $\mathbb{C}^2/\Lambda_w$.
Therefore, we have the holomorphic embedding $\Omega$ in (\ref{Phi1}) given by
\begin{align}\label{HashimotoPhi}
w \mapsto \Omega(w)=\Omega_2^{-1}(w)\Omega_1(w) 
=\frac{1}{pw}
\begin{pmatrix}
\vspace{2mm} \displaystyle \overline{\varepsilon}^2+\frac{(p-1) a D }{2}w +D \varepsilon^2 w^2  & \displaystyle  \overline{\varepsilon} -(p-1)a D w - D\varepsilon w^2 \\
\displaystyle   \displaystyle  \overline{\varepsilon} -(p-1)a D w - D\varepsilon w^2  & \displaystyle -1 -2a D w + D w^2 
\end{pmatrix},
\end{align}
where $\varepsilon =  \frac{1+\sqrt{p}}{2}$
(see \cite{Hashimoto} Theorem 3.5).

Letting $pr$ be the canonical projection $\mathfrak{S}_2 \rightarrow \mathcal{A}_2$,
the image of $\mathbb{H}$ under the mapping $pr \circ \Omega$ corresponds to the Shimura curve $\mathcal{S}_D \subset \mathcal{A}_2$.

The matrix
$ \Omega (w)=\begin{pmatrix} \tau_1 & \tau_2 \\ \tau_2 & \tau_3 \end{pmatrix} \in \mathfrak{S}_2$ in (\ref{HashimotoPhi}) satisfies
the singular relations  in (\ref{singularR}). 
This is explicitly given by 
\begin{align}\label{HashimotoSing}
m \tau_1 +(m+2aD n) \tau_2 -\frac{p-1}{4} m \tau_3 + n(\tau_2^2 - \tau_1 \tau_3 ) + (a^2 D -b) D n=0
\end{align}
with two parameters $m,n \in \mathbb{Z}$ (\cite{Hashimoto} Theorem 5.1).
The invariant $\Delta$ in (\ref{singularR}) for the singular relation (\ref{HashimotoSing}) is given by the quadratic form
\begin{align}\label{quadraticD}
\Delta(m,n) = p m^2 + 4 a D  m n + 4 b  D n^2,
\end{align}
where $m,n\in \mathbb{Z}$.
Moreover, he showed the following theorem.

\begin{thm} \label{HashimotoC}
(1) (\cite{Hashimoto}, Theorem 5.2) For a positive non-square integer $\Delta$ such that $\Delta\equiv 1,0$ $({\rm mod} 4)$, the following conditions are equivalent:

(i) The number $\Delta$ is represented by the quadratic form $\Delta(m,n)$ in (\ref{quadraticD}) with relatively prime integers $m,n \in \mathbb{Z}$.

(ii) The image $\mathcal{S}_D=\varphi_D (\mathbb{S}_D) \subset \mathcal{A}_2$ of the Shimura curve is contained in the Humbert surface $\mathcal{H}_\Delta.$

(2) (\cite{Hashimoto}, Corollary 5.3) The Shimura curve  $\mathcal{S}_D$ is contained in the intersection $\mathcal{H}_{\Delta_1} \cap \mathcal{H}_{\Delta_2}$ of two Humbert surfaces if and only if $\Delta_1$ and $\Delta_2$ are given by $\Delta(m,n)$ with relatively prime integers $m,n$.  
\end{thm}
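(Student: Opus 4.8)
The plan is to deduce both parts from the explicit singular relation (\ref{HashimotoSing}) and its invariant (\ref{quadraticD}), which I take as given, together with one structural fact: as $(m,n)$ runs over $\mathbb{Z}^2$, the relations (\ref{HashimotoSing}) already exhaust the lattice $L$ of all singular relations satisfied simultaneously at every point $\Omega(w)$ of the Shimura curve. Once this is known, the theorem is a statement about primitive representations by the quadratic form $\Delta(m,n)$.

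For the implication (i)$\Rightarrow$(ii), suppose $\Delta=\Delta(m,n)$ with $\gcd(m,n)=1$. Then (\ref{HashimotoSing}) for this $(m,n)$ holds at every $\Omega(w)$ and has the shape (\ref{singularR}) with integer coefficients $(m,\,m+2aDn,\,-\tfrac{p-1}{4}m,\,n,\,(a^{2}D-b)Dn)$; here $\tfrac{p-1}{4}\in\mathbb{Z}$ since $p\equiv 5\ (\mathrm{mod}\ 8)$. Because $m$ and $n$ themselves occur among these five integers, the gcd of all five equals $\gcd(m,n)=1$, so the relation is primitive with invariant $\Delta$, and hence $pr(\Omega(\mathbb{H}))=\mathcal{S}_D\subset\mathcal{H}_\Delta$. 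This records the dictionary I use below: a member of (\ref{HashimotoSing}) is a primitive singular relation exactly when $\gcd(m,n)=1$.

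The substance is (ii)$\Rightarrow$(i), where the structural fact enters. From the entries in (\ref{HashimotoPhi}) each of $\tau_1,\tau_2,\tau_3$ lies in the $\mathbb{Q}(\sqrt p)$-span of $\{w^{-1},1,w\}$, and a short expansion shows that the coefficient of $w^{2}$ in $\tau_2^{2}-\tau_1\tau_3$ cancels, so all five functions $1,\tau_1,\tau_2,\tau_3,\tau_2^{2}-\tau_1\tau_3$ lie in the span of $\{w^{-2},w^{-1},1,w\}$. Now $1,\tau_3$ and $\tau_2^{2}-\tau_1\tau_3$ are visibly $\mathbb{Q}$-independent---only the last has a nonzero $w^{-2}$-coefficient, and $\tau_3$ is nonconstant---so their span is at least three-dimensional and the space of $\mathbb{Q}$-linear singular relations has rank at most two; as (\ref{HashimotoSing}) already furnishes two independent relations, this rank is exactly two and (\ref{HashimotoSing}) generates all of $L$. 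Consequently, if $\mathcal{S}_D\subset\mathcal{H}_\Delta$ then every $\Omega(w)$ carries a primitive singular relation of invariant $\Delta$; since these relations form the discrete lattice $L$ while $w$ varies continuously, one such relation holds identically and therefore lies in $L$, hence equals (\ref{HashimotoSing}) for some $(m,n)$, primitive by the dictionary. Thus $\Delta=\Delta(m,n)$ with $\gcd(m,n)=1$.

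Part (2) is then immediate: $\mathcal{S}_D\subset\mathcal{H}_{\Delta_1}\cap\mathcal{H}_{\Delta_2}$ holds iff $\mathcal{S}_D$ lies in each factor, which by part (1) happens iff each $\Delta_i$ equals $\Delta(m_i,n_i)$ for relatively prime $m_i,n_i$. The main obstacle is the exhaustion step above---that (\ref{HashimotoSing}) generates all of $L$. Its conceptual source, which also guides the rank computation, is quaternionic: at a point with quaternion multiplication a singular relation of invariant $\Delta$ corresponds to a symmetric element of the maximal order $\mathfrak{O}$ generating a real quadratic order of discriminant $\Delta$; these symmetric elements form a rank-two lattice on which the discriminant is precisely the norm form $\Delta(m,n)$, and the primitivity condition $\mathbb{Q}(\sqrt\Delta)\cap\mathrm{End}(A)=\mathcal{O}_\Delta$ of Definition \ref{HumbertDf} translates into primitivity of $(m,n)$. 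Matching this intrinsic lattice with the function-theoretic count drawn from (\ref{HashimotoPhi}) is the only delicate point; the remainder is the formal bookkeeping above.
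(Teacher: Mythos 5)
First, a remark on what you are being compared against: the paper contains no proof of Theorem \ref{HashimotoC} at all. It is quoted from \cite{Hashimoto} (Theorem 5.2 and Corollary 5.3), with the singular relation (\ref{HashimotoSing}) and the form (\ref{quadraticD}) likewise imported as Theorem 5.1 there. So your argument is an independent reconstruction, and its architecture is reasonable: (i)$\Rightarrow$(ii) by exhibiting the explicit primitive relation; (ii)$\Rightarrow$(i) by pinning down the lattice $L$ of relations that hold identically along $\Omega(\mathbb{H})$ and then using a countability/identity-theorem argument; part (2) as a formal consequence of part (1). Your computations also check out: from (\ref{HashimotoPhi}) the $w^2$-coefficients of $\tau_2^2$ and of $\tau_1\tau_3$ both equal $D^2\varepsilon^2/p^2$ and cancel; $1$, $\tau_3$, $\tau_2^2-\tau_1\tau_3$ are independent; and the gcd of the five coefficients $(m,\,m+2aDn,\,-\tfrac{p-1}{4}m,\,n,\,(a^2D-b)Dn)$ equals $\gcd(m,n)$ because $m$ and $n$ occur among them.

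The one genuine gap is the sentence ``this rank is exactly two and (\ref{HashimotoSing}) generates all of $L$.'' Equality of ranks does not give equality of lattices: a full-rank sublattice such as $2\mathbb{Z}^2\subset\mathbb{Z}^2$ has finite index $>1$, and if the image of (\ref{HashimotoSing}) had index $>1$ in $L$, the implication (ii)$\Rightarrow$(i) would break (a primitive relation in $L$ could fail to be $v_{(m,n)}$ with $\gcd(m,n)=1$). You acknowledge this exhaustion as the delicate point, but what you offer in its place is only an unworked quaternionic sketch. The repair, however, is one line and is already contained in your own ``dictionary'': writing $v_{(m,n)}\in\mathbb{Z}^5$ for the coefficient vector of (\ref{HashimotoSing}), its first and fourth coordinates are exactly $m$ and $n$; so any $(a,b,c,d,e)\in L$, being by your rank bound a rational combination $\lambda v_{(1,0)}+\mu v_{(0,1)}$, has $\lambda=a\in\mathbb{Z}$ and $\mu=d\in\mathbb{Z}$, hence equals $v_{(a,d)}$. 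Thus $L$ is exhausted and saturation is automatic. Two smaller points should also be made explicit. First, the step ``if $\mathcal{S}_D\subset\mathcal{H}_\Delta$ then every $\Omega(w)$ carries a primitive singular relation of invariant $\Delta$'' silently uses the classical fact (Humbert; \cite{Geer}, Ch.\ IX) that the set of $\tau\in\mathfrak{S}_2$ admitting such a relation is $Sp(4,\mathbb{Z})$-stable, since Definition \ref{HumbertDf} only defines $\mathcal{H}_\Delta$ as the image of $N_\Delta$. Second, in the continuity step, the relations holding at individual points are not yet known to lie in $L$, so the argument should be phrased: there are countably many candidate primitive $5$-tuples; for a fixed one, the locus in $w$ where it holds is the zero set of a polynomial of degree $\le 4$ (after clearing $(pw)^2$), hence finite or all of $\mathbb{H}$; uncountability of $\mathbb{H}$ then forces one tuple to hold identically, i.e.\ to lie in $L$. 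With these patches your proof is correct.
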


\begin{rem}
As we noted in Remark
\ref{RemRot},
the mapping $\varphi_D$  is not always unique.
For a generic $D$,
the mapping $\varphi_D$ and the model $\mathcal{S}_D$ via the embedding $\Omega$ in (\ref{HashimotoPhi}) 
depend on the triple $(p,a,b)$.
\end{rem}

\begin{exap}
For the case $D=6,$ we can take  $(p,a,b)=(5,2,5)$.  The  quadratic form is given by
\begin{align}\label{D=6}
\Delta_6(m,n)= 5 m^2 + 48 m n + 20 n^2.
\end{align}

For the case $D=10,$ we can take  $(p,a,b)=(13,3,7)$. The  quadratic form is given by
\begin{align}\label{D=10}
\Delta_{10}(m,n)= 13 m^2 + 120 m n + 280 n^2.
\end{align}

For the case $D=14,$ we can take  $(p,a,b)=(5,1,3)$. The  quadratic form is given by
\begin{align}\label{D=14}
\Delta_{14}(m,n)= 5 m^2 + 56 m n + 168 n^2.
\end{align}

For the case $D=15,$ we can take  $(p,a,b)=(53,22,137)$. The  quadratic form is given by
\begin{align}\label{D=15}
\Delta_{15}(m,n)= 53 m^2 + 1320 m n + 8220 n^2.
\end{align}

\end{exap}

Due to Theorem \ref{HashimotoC} (2), we have the following results.

\begin{exap}\label{Exap5,8}
The  image of the Shimura curves $\mathcal{S}_6$ attached to $(p,a,b)=(5,2,5)$ and $\mathcal{S}_{10}$  attached to $(p,a,b)=(13,3,7)$ 
are contained in the intersection $\mathcal{H}_5 \cap \mathcal{H}_8$ of the Humbert surfaces
because
$5=\Delta_6(5,-1)=\Delta_{10}(5,-1)$ and $8=\Delta_{6}(4,-1)=\Delta_{10}(4,-1)$.
\end{exap}

\begin{exap}\label{Exap5,12}
The  image of the Shimura curves $\mathcal{S}_6$ attached to $(p,a,b)=(5,2,5)$, 
 $\mathcal{S}_{14}$ attached to $(p,a,b)=(5,2,21)$ and  $\mathcal{S}_{15}$ attached to $(p,a,b)=(53,22,137)$.
are contained in the intersection $\mathcal{H}_5 \cap \mathcal{H}_{12}$ of the Humbert surfaces
because
$5=\Delta_6(5,-1)=\Delta_{15}(25,-2)=\Delta_{14}(1,0)$ and $12=\Delta_{6}(6,-1)=\Delta_{15}(12,-1)=\Delta_{14}(6,-1)$.
\end{exap}

\begin{exap}\label{Exap5,21}
The  image of the Shimura curves $\mathcal{S}_6$ attached to $(p,a,b)=(5,2,5)$ and $\mathcal{S}_{14}$ attached to $(p,a,b)=(5,1,3)$ 
are contained in the intersection $\mathcal{H}_5 \cap \mathcal{H}_{21}$ of the Humbert surfaces
because
$5=\Delta_6(5,-1)=\Delta_{15}(25,-2)=\Delta_{14}(1,0)$ and $21=\Delta_{6}(6,-1)=\Delta_{14}(7,-1)$.
However, the Shimura curve $\mathcal{S}_{15}$ is not contained in $\mathcal{H}_{5} \cap \mathcal{H}_{21}$ because $21$ is not represented by $\Delta_{15} (m,n)$ with $m,n\in \mathbb{Z}$.
\end{exap}

\section{The embedding $\mathcal{F} \hookrightarrow \mathcal{F}_{CD}$}

\subsection{The lattice polarized $K3$ surfaces}

A $K3$ surface $S$ is  a compact complex surface such that the canonical bundle $K_S=0$ and $H^1(S,\mathcal{O}_S)=0.$

By the canonical cup product and the Poincar\'e duality, 
the homology group $H_2(S,\mathbb{Z})$ has a lattice structure.
It is well known that the lattice $H_2(S,\mathbb{Z})$ is isometric to the even unimodular lattice $E_8(-1)\oplus E_8(-1) \oplus U \oplus U \oplus U$,
where $E_8(-1)$ is the negative definite even unimodular lattice of type $E_8$ and $U$ is the parabolic lattice of rank $2$.
Let ${\rm NS}(S)$ be the N\'eron-Severi lattice of a $K3$ surface $S$.
This is a sublattice of $H_2(S,\mathbb{Z}) $ generated by the divisors on $S$.
The orthogonal complement ${\rm Tr}(S)={\rm NS}(S)^\bot$ in $H_2(S,\mathbb{Z})$ is called the transcendental lattice of $S$.

Let $S$ be a $K3$ surface  and $M$ be  a lattice.
If we have a primitive lattice embedding $\iota : M \rightarrow {\rm NS}(S)$, the pair $(S,\iota)$ is called an $M$-polarized $K3$ surface.
Let $(S_1,\iota_1)$ and $(S_2,\iota_2)$ be $M$-polarized $K3$ surfaces.
If there exist an isomorphism $S_1 \rightarrow S_2$ of $K3$ surfaces such that $\iota_1 = f^* \circ \iota_2$,  $(S_1,\iota_1)$ and $(S_2,\iota_2)$ are isomorphic as $M$-polarized $K3 $ surfaces.
From now on, we often omit the primitive lattice embedding $\iota$.

\begin{rem}
When we consider  the moduli space and the period mapping of lattice polarized $K3$ surfaces,
we should pay attention to the ampleness of lattice polarized $K3$ surfaces (see \cite{Dolgachev}).
However, 
it is safe to apply the Torelli theorem to our cases.
See Remark \ref{RemM5Ample} and \ref{RemM0Ample}.
\end{rem}

\subsection{The icosahedral invariants}

Klein \cite{Klein} studied the action of the icosahedral group on $\mathbb{P}^2(\mathbb{C})={\rm Proj}(\mathbb{C}[\zeta_0,\zeta_1,\zeta_2])$.
He obtained a system of generators
$\mathfrak{A},\mathfrak{B},\mathfrak{C},\mathfrak{D}\in \mathbb{C}[\zeta_0,\zeta_1,\zeta_2]$
 of ring of icosahedral invariants.
The $\mathfrak{A}$ ($\mathfrak{B},\mathfrak{C},\mathfrak{D}$, resp.) is a homogeneous polynomial of weight $2$ ($6,10,15$, resp.)
and the ring is given by $
\mathbb{C}[\mathfrak{A}:\mathfrak{B}:\mathfrak{C}:\mathfrak{D}]/R (\mathfrak{A},\mathfrak{B},\mathfrak{C},\mathfrak{D}),
$
where $R (\mathfrak{A},\mathfrak{B},\mathfrak{C},\mathfrak{D})$ is the Klein's icosahedral relation
\begin{align}\label{KleinRel}
R (\mathfrak{A},\mathfrak{B},\mathfrak{C},\mathfrak{D}) =144\mathfrak{D}^2-(-1728\mathfrak{B}^5 +720\mathfrak{A}\mathfrak{C}\mathfrak{B}^3 -80 \mathfrak{A}^2 \mathfrak{C}^2 \mathfrak{B} +64\mathfrak{A}^3(5\mathfrak{B}^2-\mathfrak{A}\mathfrak{C})^2+\mathfrak{C}^3).
\end{align}

The Hilbert modular group $PSL(2,\mathcal{O}_5)$ acts on the product $\mathbb{H} \times \mathbb{H}$ of upper half planes.
We consider the symmetric Hilbert modular surface
$
\langle PSL(2,\mathcal{O}_5 ) , \tau \rangle \backslash (\mathbb{H}\times \mathbb{H})$,
 where $\tau$ is the involution given by $(z_1,z_2 )\mapsto (z_2 ,z_1)$. 

Hirzebruch obtained the following result.

\begin{prop} \label{KleinH} {\rm (\cite{Hirzebruch})}
(1)
The ring of  symmetric modular forms for  $PSL(2,\mathcal{O}_5)$ is isomorphic to the ring
$
\mathbb{C}[\mathfrak{A},\mathfrak{B},\mathfrak{C},\mathfrak{D}]/R(\mathfrak{A},\mathfrak{B},\mathfrak{C},\mathfrak{D}).
$
Here, $\mathfrak{A} $ {\rm(}$\mathfrak{B},\mathfrak{C},\mathfrak{D}$, resp.{\rm )} corresponds to a
 symmetric modular form for $PSL(2,\mathcal{O}_5)$ of weight $2$ {\rm (}$6,10,15$, resp.{\rm )}.

(2) 
The Hilbert modular surface $\langle PSL(2,\mathcal{O}_5 ) , \tau \rangle \backslash (\mathbb{H}\times \mathbb{H})$ has a compactification by adding one cusp 
$
(\mathfrak{A}:\mathfrak{B}:\mathfrak{C})=(1:0:0).
$
This compactification is the weighted projective plane
$\mathbb{P}(1:3:5)={\rm Proj} (\mathbb{C}[\mathfrak{A},\mathfrak{B},\mathfrak{C}])$.
\end{prop}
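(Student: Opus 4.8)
The plan is to compute the graded ring $M_\ast$ of symmetric Hilbert modular forms for $PSL(2,\mathcal{O}_5)$ explicitly and then to recognise it as Klein's icosahedral ring. The structural observation driving the proof is that both $M_\ast$ and $\mathbb{C}[\mathfrak{A},\mathfrak{B},\mathfrak{C},\mathfrak{D}]/R$ are finitely generated graded $\mathbb{C}$-algebras with generators in weights $2,6,10,15$ subject to a single relation that writes the square of the weight-$15$ generator as a weight-$30$ polynomial in the other three. So it suffices to (a) produce forms of these four weights inside $M_\ast$, (b) show by a dimension count that they generate and satisfy exactly one relation, and (c) check that this relation is, after normalisation, the Klein relation (\ref{KleinRel}). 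The ambient framework is Baily--Borel theory, which ensures that $M_\ast$ is finitely generated and that its $\mathrm{Proj}$ is the Satake compactification of the modular surface; this delivers part (2) once the ring is known.

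First I would construct the generators. Since $\mathbb{Q}(\sqrt{5})$ has class number one there is a unique cusp, and the normalised symmetric Eisenstein series of weight $2$ provides $\mathfrak{A}$, non-vanishing at that cusp. The higher even weights are obtained from products of Eisenstein series and from restriction to the diagonal $z_1 = z_2$, where a symmetric Hilbert modular form becomes an elliptic modular form for $SL(2,\mathbb{Z})$ of doubled weight; comparing with the classical generators $E_4,E_6,\Delta$ lets me fix $\mathfrak{B}$ and $\mathfrak{C}$. The weight-$15$ form $\mathfrak{D}$ is the distinguished odd-weight form whose divisor is the union of the reflection (Hirzebruch--Zagier) curves; it is characterised up to scalar by this vanishing, is $\tau$-anti-invariant, and its square lies in the even subring. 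Producing $\mathfrak{D}$ and verifying that it is genuinely new, i.e. not already a polynomial in $\mathfrak{A},\mathfrak{B},\mathfrak{C}$, is the most delicate step of the construction.

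Next I would pin down the ring by a Hilbert-series computation. Using the Hirzebruch proportionality principle together with the elliptic-fixed-point contributions and the resolved cusp, one computes $\dim M_k$ for all $k$ and forms the generating series $\sum_k (\dim M_k)\, t^k$. Matching it against the Hilbert series $(1+t^{15})/((1-t^{2})(1-t^{6})(1-t^{10}))$ of $\mathbb{C}[\mathfrak{A},\mathfrak{B},\mathfrak{C},\mathfrak{D}]/R$ forces the algebra homomorphism from the abstract polynomial ring to $M_\ast$ to be surjective with kernel a single weight-$30$ relation (here I use that $M_\ast$ is an integral domain, so the kernel is prime and the series match is exact). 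It remains to identify the relation with $R$: this I would do by computing enough Fourier coefficients of $\mathfrak{D}^2$ and of the weight-$30$ monomials in $\mathfrak{A},\mathfrak{B},\mathfrak{C}$ to determine the polynomial, the conceptual reason for the coincidence being that the reflection configuration on the modular surface realises the icosahedral configuration, so that $\mathrm{Proj}(M_\ast)$ is the icosahedral quotient and $M_\ast$ is the icosahedral invariant ring. This establishes part (1).

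Part (2) then follows formally. Because the odd generator $\mathfrak{D}$ has odd weight, the even Veronese subring of $M_\ast$ is exactly the polynomial ring $\mathbb{C}[\mathfrak{A},\mathfrak{B},\mathfrak{C}]$, and the standard isomorphism $\mathrm{Proj}(S)\cong\mathrm{Proj}(S^{(2)})$ identifies $\mathrm{Proj}(M_\ast)$ with $\mathrm{Proj}(\mathbb{C}[\mathfrak{A},\mathfrak{B},\mathfrak{C}])=\mathbb{P}(1:3:5)$. Baily--Borel identifies this with the Satake compactification of $\langle PSL(2,\mathcal{O}_5),\tau\rangle\backslash(\mathbb{H}\times\mathbb{H})$, which adds a single boundary point since there is one cusp; and because $\mathfrak{A}$ is the Eisenstein series with non-zero constant term while $\mathfrak{B}$ and $\mathfrak{C}$ vanish at the cusp, that point is $(\mathfrak{A}:\mathfrak{B}:\mathfrak{C})=(1:0:0)$. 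The hard part of the whole argument is the exact dimension count: one must control the elliptic-point and cusp corrections precisely enough both to certify that the four forms generate and to exclude any relation beyond $R$, since the qualitative shape of the ring alone does not pin it down.
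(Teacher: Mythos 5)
The paper does not actually prove this proposition: it is quoted as Hirzebruch's theorem \cite{Hirzebruch}, whose argument is geometric. Hirzebruch identifies the compactified \emph{symmetric} Hilbert modular surface of level $\sqrt{5}$ with $\mathbb{P}^2(\mathbb{C})$, equivariantly for the action of $PSL(2,\mathcal{O}_5/\sqrt{5}\,\mathcal{O}_5)\cong A_5$, shows that this action is exactly Klein's icosahedral action, and then reads off the ring of symmetric forms for the full group as the ring of $A_5$-invariants $\mathbb{C}[\mathfrak{A},\mathfrak{B},\mathfrak{C},\mathfrak{D}]/R$, weight-$k$ forms matching degree-$k$ invariants; part (2) falls out of the same identification. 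Your route --- construct four forms, compute the Hilbert series of the symmetric ring by a dimension formula, use integrality/UFD arguments to force a single weight-$30$ relation, and identify it via Fourier coefficients --- is the older, Gundlach-style computational approach. It is a legitimate alternative in principle, and your part (2) (even Veronese subring, $\mathrm{Proj}(S)\cong\mathrm{Proj}(S^{(2)})$, Baily--Borel with one cusp located at $(1:0:0)$ because $\mathfrak{A}$ is Eisenstein while $\mathfrak{B},\mathfrak{C}$ are cusp forms) is correctly argued once part (1) is in hand.

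There are, however, two genuine gaps in part (1). First, an internal inconsistency: you describe $\mathfrak{D}$ as ``$\tau$-anti-invariant,'' yet the ring you are computing is by definition the ring of $\tau$-\emph{invariant} (symmetric) forms, so an anti-invariant form cannot be one of its generators. This is not a cosmetic slip: for odd weight the decomposition of $M_{15}$ into $\tau$-eigenspaces is precisely the delicate point, and if the weight-$15$ generator were skew in your convention, your ``symmetric ring'' would collapse to $\mathbb{C}[\mathfrak{A},\mathfrak{B},\mathfrak{C}]$ and the statement you are proving would be false. You must fix a $\tau$-linearization of the line bundle of odd-weight forms and verify that the form with divisor supported on the reflection curves lies in the invariant eigenspace. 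Second, the dimension count you invoke (proportionality plus elliptic-point and cusp corrections) computes $\dim M_k$ for $PSL(2,\mathcal{O}_5)$ itself; to get the dimensions of the symmetric subspaces you additionally need the trace of $\tau$ on each $M_k$, i.e.\ a holomorphic Lefschetz fixed-point computation along the fixed curve of $\tau$ (the diagonal) on the resolved surface. Without that equivariant refinement, the series you propose to match against $(1+t^{15})/\bigl((1-t^{2})(1-t^{6})(1-t^{10})\bigr)$ is not the one your formula produces. A smaller repair: the series match alone does not ``force'' surjectivity; you first need $\mathfrak{A},\mathfrak{B},\mathfrak{C}$ algebraically independent, whence the kernel of $\mathbb{C}[x_2,x_6,x_{10},x_{15}]\to M_*$ is a height-one prime in a UFD, hence principal of weighted degree $30$ (this also yields $\mathfrak{D}^2\in\mathbb{C}[\mathfrak{A},\mathfrak{B},\mathfrak{C}]$ by normality); only then does equality of the resulting Hilbert series with the computed one give generation.
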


Set
\begin{align}\label{XY}
X=\frac{\mathfrak{B}}{\mathfrak{A}^3}, \quad \quad Y=\frac{\mathfrak{C}}{\mathfrak{A}^5}.
\end{align}
Then, the pair $(X,Y)$ gives an affine coordinate system of $\{\mathfrak{A}\not=0\}\subset \mathbb{P}(1:3:5)$.

\begin{rem}\label{HumbertHilbertRem}
The symmetric Hilbert modular surface $ \langle PSL(2,\mathcal{O}_5 ), \tau \rangle \backslash (\mathbb{H}\times \mathbb{H})$
coincides with the Humbert surface $\mathcal{H}_5$.
\end{rem}

\subsection{The family $\mathcal{F}=\{S(\mathfrak{A}:\mathfrak{B}:\mathfrak{C})\}$ of $K3$ surfaces}

In this subsection, we survey the results of \cite{NaganoTheta}.

For $(\mathfrak{A}:\mathfrak{B}:\mathfrak{C})  \in  \mathbb{P}(1:3:5)-\{ (1:0:0) \} $, 
we have the elliptic $K3$ surface 
\begin{align}\label{S(ABC)}
S(\mathfrak{A}:\mathfrak{B}:\mathfrak{C}):  z^2=x^3-4(4y^3 -5 \mathfrak{A} y^2) x^2 + 20 \mathfrak{B} y^3 x +\mathfrak{C} y^4.
\end{align}

The family $\mathcal{F}=\{S(\mathfrak{A}:\mathfrak{B}:\mathfrak{C}) | (\mathfrak{A}:\mathfrak{B}:\mathfrak{C})\in \mathbb{P}(1:3:5)-\{(1:0:0)\}\}$ is studied in \cite{NaganoTheta}.
By a detailed observation, we can prove  the following theorem.

\begin{prop}\label{K3Kiso} (\cite{NaganoTheta}, Section 2)
(1) For generic $(\mathfrak{A}:\mathfrak{B}:\mathfrak{C})  \in  \mathbb{P}(1:3:5)-\{ (1:0:0) \} $, the N\'eron-Severi lattice ${\rm NS}(S(\mathfrak{A}:\mathfrak{B}:\mathfrak{C}) )$ is given by the intersection matrix $E_8(-1) \oplus E_8(-1) \oplus \begin{pmatrix} 2 & 1 \\ 1 & -2 \end{pmatrix}=M_5$ and 
the transcendental lattice ${\rm Tr}(S(\mathfrak{A}:\mathfrak{B}:\mathfrak{C}) )$ is given by the intersection matrix $U \oplus \begin{pmatrix} 2 & 1 \\ 1 & -2 \end{pmatrix}=A_5.$

(2) The family $\mathcal{F}=\{S(\mathfrak{A}:\mathfrak{B}:\mathfrak{C}) \}$ gives the isomorphy classes of $M_5$-polarized $K3 $ surfaces.
Especially, $S(\mathfrak{A}_1:\mathfrak{B}_1:\mathfrak{C}_1)$ and $S(\mathfrak{A}_2:\mathfrak{B}_2:\mathfrak{C}_2)$ are isomorphic as $M_5$-polarized $K3$ surfaces if and only if $(\mathfrak{A}_1:\mathfrak{B}_1:\mathfrak{C}_1)=(\mathfrak{A}_2:\mathfrak{B}_2:\mathfrak{C}_2)$
in $\mathbb{P}(1:3:5)$.
\end{prop}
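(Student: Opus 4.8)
The plan is to realize $S(\mathfrak{A}:\mathfrak{B}:\mathfrak{C})$ as an elliptic surface over the $y$-line $\mathbb{P}^1$, with the cubic in $x$ as fiber, and to read off its N\'eron--Severi lattice from the configuration of singular fibers together with the Mordell--Weil group. First I would put the equation (\ref{S(ABC)}) into Weierstrass form by completing the cube $x\mapsto x-a_2/3$ with $a_2=-16y^3+20\mathfrak{A}y^2$, $a_4=20\mathfrak{B}y^3$, $a_6=\mathfrak{C}y^4$, and compute the discriminant $\Delta(y)$. A direct computation shows the top coefficients cancel, so that $\deg_y\Delta=13$ for generic $(\mathfrak{A},\mathfrak{B},\mathfrak{C})$; inspecting the orders of vanishing of $p,q,\Delta$ then yields a fiber of type $IV^\ast$ at $y=0$ (root lattice $E_6$), a fiber of type $I_5^\ast$ at $y=\infty$ (root lattice $D_9$), and five nodal $I_1$ fibers, the Euler numbers $8+11+5=24$ confirming the count. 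I would also record the section $x=0,\ z=\sqrt{\mathfrak{C}}\,y^2$, which will serve as a generator of the Mordell--Weil group.

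Next, Shioda--Tate gives $\operatorname{rank}\mathrm{NS}=2+6+9+\operatorname{rank}\mathrm{MW}$, so the claim $\operatorname{rank}\mathrm{NS}=18$ amounts to $\operatorname{rank}\mathrm{MW}=1$; I would check this by showing the section above has infinite order and that there are no further independent sections for generic parameters. Its height, computed from $h(P)=4-\sum_v\mathrm{contr}_v(P)$ with local contributions $4/3$ at $IV^\ast$ and $9/4$ at $I_5^\ast$, is $h(P)=4-\tfrac{4}{3}-\tfrac{9}{4}=\tfrac{5}{12}$, whence the Shioda--Tate--N\'eron formula gives $|\!\operatorname{disc}\mathrm{NS}|=\det(E_6)\det(D_9)\,h(P)=3\cdot4\cdot\tfrac{5}{12}=5$. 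Thus $\mathrm{NS}$ is an even lattice of signature $(1,17)$ and discriminant $-5$, and since such a lattice is unique in its genus by Nikulin's criterion it is isometric to $M_5=E_8(-1)^{\oplus2}\oplus\left(\begin{smallmatrix}2&1\\1&-2\end{smallmatrix}\right)$. Finally, the embedding $M_5\hookrightarrow\Lambda_{K3}=U^{\oplus3}\oplus E_8(-1)^{\oplus2}$ is primitive and (again by Nikulin's uniqueness for this signature and discriminant) essentially unique, so the transcendental lattice is the orthogonal complement, which one computes to be $U\oplus\left(\begin{smallmatrix}2&1\\1&-2\end{smallmatrix}\right)=A_5$; this proves (1).

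For (2) I would invoke the Torelli theorem for lattice-polarized $K3$ surfaces (\cite{Dolgachev}): two $M_5$-polarized surfaces are isomorphic exactly when their periods agree in the quotient $\Gamma\backslash\mathcal{D}$ of the period domain $\mathcal{D}=\{[\omega]\in\mathbb{P}(A_5\otimes\mathbb{C}):\omega^2=0,\ \omega\overline{\omega}>0\}$ by the arithmetic group attached to $A_5$. Because $A_5$ has signature $(2,2)$ and discriminant $5$, the domain $\mathcal{D}$ is a product $\mathbb{H}\times\mathbb{H}$ and $\Gamma\backslash\mathcal{D}$ is the symmetric Hilbert modular surface for $\mathbb{Q}(\sqrt{5})$, i.e.\ the Humbert surface $\mathcal{H}_5$ (Remark \ref{HumbertHilbertRem}), which by Hirzebruch's Proposition \ref{KleinH} is $\mathbb{P}(1:3:5)$ with the cusp $(1:0:0)$ removed. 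Since the explicit period map of \cite{NaganoTheta} sends $(\mathfrak{A}:\mathfrak{B}:\mathfrak{C})$ to the corresponding point of this quotient, the parametrization is injective, giving the stated equivalence, while surjectivity of the period map onto the non-cuspidal locus shows that $\mathcal{F}$ realizes every isomorphy class.

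The main obstacle will be the rank and lattice identification in (1): one must exclude, for generic parameters, any jump in the Mordell--Weil rank or coincidences among the $I_1$ fibers, and then pin down the exact isometry class $M_5$ rather than merely an overlattice of the trivial lattice --- this is precisely where the height computation $h(P)=\tfrac{5}{12}$ and Nikulin's uniqueness do the essential work. By contrast, once $\mathrm{NS}\cong M_5$ and $\mathrm{Tr}\cong A_5$ are in hand, part (2) is a formal consequence of Torelli combined with the already quoted identification of $\mathcal{H}_5$ with $\mathbb{P}(1:3:5)$.
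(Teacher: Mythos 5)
The paper itself offers no proof of Proposition \ref{K3Kiso}: it is imported from \cite{NaganoTheta}, Section 2, where (as Remark \ref{RemM5Ample} summarizes) the lattice $M_5$ is embedded into $\mathrm{NS}$ by exhibiting explicit effective divisors coming from the elliptic fibration, after which the moduli statement follows from the Torelli theorem for lattice-polarized $K3$ surfaces. Your part (1) is a sound repackaging of the same kind of fibration analysis, using Shioda--Tate numerics and Nikulin uniqueness instead of exhibiting lattice generators, and your computations check out: for the fibration over the $y$-line one finds $(v(c_4),v(c_6),v(\Delta))=(3,4,8)$ at $y=0$, hence a fibre of type $IV^*$ (lattice $E_6$), and $(2,3,11)$ at $y=\infty$, hence $I_5^*$ (lattice $D_9$); the remaining five roots of the degree-$13$ discriminant give $I_1$'s; the section $P\colon (x,z)=(0,\sqrt{\mathfrak{C}}\,y^2)$ is disjoint from the zero section and passes through the singular Weierstrass point of both reducible fibres, giving $h(P)=4-\tfrac{4}{3}-\tfrac{9}{4}=\tfrac{5}{12}$ and a rank-$18$ sublattice of $\mathrm{NS}$ of discriminant $\pm 5$. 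One point you should make explicit in the Nikulin step: signature $(1,17)$ and $|\mathrm{disc}|=5$ do not by themselves determine the discriminant form (there are two inequivalent quadratic forms on $\mathbb{Z}/5\mathbb{Z}$), but only one of them has signature $\equiv 0 \pmod 8$, so uniqueness in the genus does apply and $\mathrm{NS}\cong M_5$ follows \emph{once the Picard number is known to be exactly} $18$.

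That caveat is the first genuine gap: your argument yields $\rho\geq 18$, but the proposition requires $\rho=18$ generically (equivalently, Mordell--Weil rank exactly $1$), and neither the height computation nor Nikulin's theorem can supply this upper bound. You flag it as the main obstacle but give no mechanism to resolve it; one needs, for instance, a Noether--Lefschetz countability argument combined with the fact that the family has genuinely two-dimensional variation of Hodge structure (in Nagano's work this is visible from the Picard--Fuchs system of \cite{NaganoPDE}), or a single explicit member with $\rho=18$. The second, more serious gap is in part (2), where your argument is circular: the statement that distinct $(\mathfrak{A}:\mathfrak{B}:\mathfrak{C})$ give non-isomorphic $M_5$-polarized surfaces is precisely the injectivity of the period map on $\mathbb{P}(1:3:5)-\{(1:0:0)\}$, and you justify it by appeal to ``the explicit period map of \cite{NaganoTheta}'' --- but the explicit theta description of that map (Proposition \ref{ThetaRepn} here) is derived in \cite{NaganoTheta} \emph{from} the proposition you are proving. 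The Torelli theorem only converts (2) into the assertion that the period map is a bijection onto $\langle PSL(2,\mathcal{O}_5),\tau\rangle\backslash(\mathbb{H}\times\mathbb{H})$; that bijectivity still has to be established (it is the substance of \cite{NaganoTheta}, Sections 2--3, and of the identification in Remark \ref{HumbertHilbertRem}), and in addition applying the lattice-polarized Torelli theorem of \cite{Dolgachev} requires verifying the ampleness condition on the polarization --- exactly the point the paper stresses in Remark \ref{RemM5Ample} --- which your sketch never addresses.
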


The period domain for the family $\mathcal{F}$ is given by the Hermitian symmetric space $\mathcal{D}=\{\xi \in \mathbb{P}^3(\mathbb{C}) | {}^t \xi A_5 \xi =0, {}^t \xi A_5 \overline{\xi}>0\}$ of type $IV.$ 
The space $\mathcal{D}$ has  two connected components $\mathcal{D}_+$ and $\mathcal{D}_-$.
We have the multivalued period mapping $\Phi:\mathbb{P}(1:3:5)-\{(1:0:0)\} \rightarrow \mathcal{D}_+$.
There exists a biholomorphic mapping $j: \mathbb{H} \times \mathbb{H} \rightarrow \mathcal{D}_+$.
Then, we have the multivalued mapping
$
j^{-1} \circ \Phi:\mathbb{P}(1:3:5)-\{(1:0:0)\} \rightarrow \mathbb{H} \times \mathbb{H},
$
that is given by
\begin{align}\label{ourperiod}
(\mathfrak{A}:\mathfrak{B}:\mathfrak{C}) \mapsto (z_1,z_2)=\Big(-\frac{ \int_{\Gamma_3}\omega + \frac{1-\sqrt{5}}{2}\int_{\Gamma_4}\omega}{ \int_{\Gamma_2} \omega},-\frac{ \int_{\Gamma_3}\omega +\frac{1+\sqrt{5}}{2}\int_{\Gamma_4} \omega}{ \int_{\Gamma_2}\omega}\Big),
\end{align}
where $\omega$ is the unique holomorphic $2$-form  up to a constant factor and $\Gamma_1,\cdots, \Gamma_4$ are certain $2$-cycles on  $S(\mathfrak{A}:\mathfrak{B}:\mathfrak{C}) $ (for detail, see \cite{NaganoTheta} and \cite{NaganoKummer}).

\begin{rem}\label{RemM5Ample}
In \cite{NaganoTheta},
the primitive lattice embedding $\iota:M_5\hookrightarrow {\rm NS}(S(\mathfrak{A}:\mathfrak{B}:\mathfrak{C}) )$
of the $M_5$-polarized $K3$ surfaces
is given  explicitly.
Especially, the image $\iota (M_5)$ is given by effective divisors of $S(\mathfrak{A}:\mathfrak{B}:\mathfrak{C}) $.
In fact, it assures an ampleness of lattice polarized $K3$ surfaces and  we can  apply the Torelli theorem  to our period mapping for $\mathcal{F}$ safely. 
For detailed argument, see \cite{NaganoTheta} Section 2.2.
\end{rem}

Let $\mathfrak{S}_2$ be the Siegel upper half plane consisting of $2\times 2$ complex matrices.
 The mapping $\mu_5:\mathbb{H}\times\mathbb{H} \rightarrow \mathfrak{S}_2$ given by
\begin{align}\label{Mullerembedding}
(z_1,z_2) \mapsto  \frac{1}{2\sqrt{5}}\begin{pmatrix}  (1+\sqrt{5})z_1 -(1-\sqrt{5})z_2 & 2(z_1-z_2) \\ 2(z_1 -z_2) & (-1+\sqrt{5}) z_1 +(1+\sqrt{5}) z_2 \end{pmatrix} 
\end{align}
gives 
 a modular embedding (see the following diagram).
\begin{align*}
\begin{CD}
\mathbb{H}\times \mathbb{H} @>\mu_5>> \mathfrak{S}_2\\
@V  \langle PSL(2,\mathcal{O}_5),\tau\rangle VV   @VV  Sp(4,\mathbb{Z})V \\
\mathbb{H}\times \mathbb{H}  @>>\mu_5> \mathfrak{S}_2\\
\end{CD}
\end{align*}
Moreover, $\mu_5 $ in (\ref{Mullerembedding}) gives a parametrization of the surface $N_5$ in Definition \ref{HumbertDf}:
\begin{align}\label{N5}
N_5=\{ \begin{pmatrix} \tau_1 & \tau_2 \\  \tau_2 &  \tau_3 \end{pmatrix} \in \mathfrak{S}_2 |  -\tau_1 + \tau_2 + \tau_3  =0\}.
\end{align}

For $\Omega\in\mathfrak{S}_2$ and $a,b \in \{0,1\}^2$ with ${}^t a b\equiv0 \hspace{1mm}({\rm mod}2)$,
set
$
\vartheta (\Omega;a,b)=\sum _{g\in \mathbb{Z}^2} {\rm exp}\Big(\pi \sqrt{-1} \big( {}^t\big( g+\frac{1}{2} a\big) \Omega \big( g+\frac{1}{2}a \big)+{}^tgb\big)\Big).
$
For $j\in\{0,1,\cdots,9\}$, we set
$
\theta_j (z_1,z_2)=\vartheta (\mu_5(z_1,z_2 ) ;a,b),
$
where the correspondence between $j$ and $(a,b)$ is given by Table 1.
\begin{table}
\center
{\footnotesize
\begin{tabular}{lcccccccccc}
\toprule
$j$&$0$&$ 1$ &$2$ &$3$&$4$&$5$&$6$&$7$&$8$&$9$  \\
\midrule
${}^t a$& $(0,0)$ &$(1,1)$ &$(0,0)$&$(1,1)$&$(0,1)$&$(1,0)$&$(0,0)$&$(1,0)$&$(0,0)$ &$(0,1)$ \\
${}^t b$ &$(0,0)$&$(0,0)$&$(1,1)$&$(1,1)$& $(0,0) $& $(0,0)$& $(0,1)$& $(0,1)$ & $(1,0)$&$(1,0)$\\
\bottomrule
\end{tabular}
}
\caption{The correspondence between $j$ and $(a,b)$.}
\end{table}
Let $a\in \mathbb{Z}$ and $j_1,\cdots,j_r \in\{0,\cdots,9\}$. 
We set  $\theta_{j_1,\cdots,j_r}^a  = \theta_{j_1}^a \cdots \theta_{j_r} ^a$.
The following $g_2$ ($s_6,s_{10}$, resp.) is   a symmetric Hilbert modular form of weight $2$ ($6,10,$ resp.) for  $\mathbb{Q}(\sqrt{5})$ (see M\"uller \cite{Muller}):
$
g_2= \theta_{0145}-\theta_{1279}-\theta_{3478}+\theta_{0268} +\theta_{3569},
s_6=2^{-8} (\theta_{012478}^2 +\theta_{012569}^2 +\theta_{034568}^2 + \theta_{236789}^2 +\theta_{134579}^2),
s_{10}=s_5^2 = 2^{-12} \theta_{0123456789}^2.$

\begin{prop}\label{ThetaRepn} (\cite{NaganoTheta} Theorem 4.1)
Using the  coordinates $(X,Y)$ of (\ref{XY}),
the inverse correspondence $(z_1,z_2)\mapsto (X(z_1,z_2),Y(z_1,z_2))$ of the period mapping (\ref{ourperiod}) for $\mathcal{F}$ has the following theta expression
\begin{align}\label{Ourtheta}
\displaystyle X(z_1,z_2)=2^5 \cdot 5^2 \cdot \frac{s_6(z_1,z_2)}{g_2^3(z_1,z_2)},
\quad \quad \quad 
\displaystyle Y(z_1,z_2)=2^{10} \cdot 5^5  \cdot\frac{s_{10}(z_1,z_2)}{g_2^5 (z_1,z_2)}.
\end{align}
Moreover, $X$ and $Y$ give a system of generators of the field of symmetric Hilbert modular functions for $\mathbb{Q}(\sqrt{5})$.
\end{prop}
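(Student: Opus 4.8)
The plan is to exhibit both sides of each identity in (\ref{Ourtheta}) as weight-zero symmetric Hilbert modular functions on $\mathcal{H}_5$, to identify them by comparing their divisors (equivalently, their leading behavior at the cusp), and finally to read off the generation statement from Hirzebruch's description of the ring.

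First I would upgrade the period correspondence to a genuine isomorphism. By Proposition \ref{K3Kiso} the family $\mathcal{F}$ realizes exactly the isomorphy classes of $M_5$-polarized $K3$ surfaces, with $(\mathfrak{A}:\mathfrak{B}:\mathfrak{C})$ a fine modular parameter, and by Remark \ref{RemM5Ample} the polarization is ample, so the Torelli theorem applies. Composing with the biholomorphism $j:\mathbb{H}\times\mathbb{H}\to\mathcal{D}_+$ and using that $\langle PSL(2,\mathcal{O}_5),\tau\rangle\backslash(\mathbb{H}\times\mathbb{H})$ is the Humbert surface $\mathcal{H}_5$ (Remark \ref{HumbertHilbertRem}), the map $j^{-1}\circ\Phi$ of (\ref{ourperiod}) becomes an isomorphism of the parameter space onto $\mathcal{H}_5$ whose inverse $(z_1,z_2)\mapsto(X(z_1,z_2),Y(z_1,z_2))$ is single valued. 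Since the period mapping (\ref{ourperiod}) is equivariant for $\langle PSL(2,\mathcal{O}_5),\tau\rangle$ (its multivaluedness being governed by this monodromy group), the pulled-back functions $X$ and $Y$ are invariant under $\langle PSL(2,\mathcal{O}_5),\tau\rangle$, i.e.\ symmetric Hilbert modular functions.

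Next I would note that both sides of each equation in (\ref{Ourtheta}) are of weight zero. Since Proposition \ref{KleinH}(1) assigns weights $2,6,10$ to $\mathfrak{A},\mathfrak{B},\mathfrak{C}$, the ratios $X=\mathfrak{B}/\mathfrak{A}^3$ and $Y=\mathfrak{C}/\mathfrak{A}^5$ of (\ref{XY}) have weight $0$; and as $g_2,s_6,s_{10}$ are M\"uller's symmetric Hilbert modular forms of weights $2,6,10$, the quotients $s_6/g_2^3$ and $s_{10}/g_2^5$ are of weight $0$ too. The one-dimensionality of the weight-$2$ space in Proposition \ref{KleinH}(1) forces $g_2=\lambda\mathfrak{A}$ for a nonzero constant $\lambda$, so the two functions share their polar divisor $\{\mathfrak{A}=0\}=\{g_2=0\}$. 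It remains to match the zero divisors, i.e.\ to prove $s_6=\mu\mathfrak{B}$ and $s_{10}=\nu\mathfrak{C}$ with $\mu,\nu$ constants. A priori $s_6$ lies in $\langle\mathfrak{A}^3,\mathfrak{B}\rangle$ and $s_{10}$ in $\langle\mathfrak{A}^5,\mathfrak{A}^2\mathfrak{B},\mathfrak{C}\rangle$; I would eliminate the unwanted monomials by the order of vanishing at the cusp $(\mathfrak{A}:\mathfrak{B}:\mathfrak{C})=(1:0:0)$ of Proposition \ref{KleinH}(2). There $\mathfrak{A}$ is a unit while $\mathfrak{B},\mathfrak{C}$ vanish; the theta products $s_6$ and $s_{10}=s_5^2$ vanish at this cusp (the latter to higher order, being a square), which removes the $\mathfrak{A}^3$ term from $s_6$ and both the $\mathfrak{A}^5$ and $\mathfrak{A}^2\mathfrak{B}$ terms from $s_{10}$, leaving $s_6\propto\mathfrak{B}$ and $s_{10}\propto\mathfrak{C}$. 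The precise constants $2^5\cdot5^2$ and $2^{10}\cdot5^5$ are then read off by comparing the leading Fourier coefficients of the two weight-zero functions in a local analytic chart at the cusp.

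The step I expect to be the main obstacle is exactly this last comparison. Ruling out the $\mathfrak{A}^2\mathfrak{B}$ contribution to $s_{10}$ (rather than merely the $\mathfrak{A}^5$ contribution) requires knowing that $s_5=2^{-6}\theta_{0123456789}$ vanishes at the cusp strictly faster than $\mathfrak{B}$, and the explicit normalizing constants demand an honest $q$-expansion of both M\"uller's theta products and the icosahedral invariants in a common coordinate — the computational heart of the argument, carried out in \cite{NaganoTheta}. Once the identities (\ref{Ourtheta}) are established, the final assertion follows at once: by Proposition \ref{KleinH}(2) the space $\mathbb{P}(1:3:5)={\rm Proj}(\mathbb{C}[\mathfrak{A},\mathfrak{B},\mathfrak{C}])$ is the compactified symmetric Hilbert modular surface, so its function field $\mathbb{C}(X,Y)$ coincides, through the isomorphism of the first step, with the field of symmetric Hilbert modular functions for $\mathbb{Q}(\sqrt{5})$, whence $X$ and $Y$ generate it.
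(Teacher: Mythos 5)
The paper itself contains no proof of this proposition: it is quoted verbatim from \cite{NaganoTheta} (Theorem 4.1), so your attempt has to be judged against what such a proof must actually contain. Your outer steps are sound: single-valuedness and $\langle PSL(2,\mathcal{O}_5),\tau\rangle$-invariance of $X,Y$ via Proposition \ref{K3Kiso}, Remark \ref{RemM5Ample} and the Torelli theorem, and the deduction of the final generation statement from Proposition \ref{KleinH}(2) once the identities (\ref{Ourtheta}) are available. The classical bookkeeping on the modular-forms side ($g_2$ spans the weight-$2$ piece, $s_6$ is a cusp form, so it misses the Eisenstein direction) is also correct as far as it goes.

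The middle of your argument has two genuine gaps. First, you invoke Proposition \ref{KleinH}(1) to treat $\mathfrak{A},\mathfrak{B},\mathfrak{C}$ as modular forms of weight $2,6,10$ and then write $s_6\in\langle\mathfrak{A}^3,\mathfrak{B}\rangle$, $s_{10}\in\langle\mathfrak{A}^5,\mathfrak{A}^2\mathfrak{B},\mathfrak{C}\rangle$. But Proposition \ref{KleinH}(1) concerns Klein's invariants as abstract generators of Hirzebruch's ring, whereas the $\mathfrak{A},\mathfrak{B},\mathfrak{C}$ appearing in (\ref{Ourtheta}) are the Weierstrass coefficients of the family (\ref{S(ABC)}); that these two objects correspond under the period mapping (even up to the triangular ambiguity $\mathfrak{B}\mapsto\lambda\mathfrak{B}+\mu\mathfrak{A}^3$, etc.) is precisely the qualitative content of the statement being proved. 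It can be repaired --- one must show the period isomorphism extends over the cusp to an isomorphism of the two copies of $\mathbb{P}(1:3:5)$, hence is induced by a graded ring automorphism --- but you never do this. Second, and more seriously, your computational anchor, ``comparing the leading Fourier coefficients of the two weight-zero functions at the cusp,'' cannot be executed: $X(z_1,z_2)$ and $Y(z_1,z_2)$ are defined as the inverse of a transcendental period mapping and have no directly computable cusp expansion, and deferring this ``computational heart'' to \cite{NaganoTheta} is circular, since the statement \emph{is} Theorem 4.1 of that paper. What is missing is an independent anchor relating the period map to classical objects; the natural one is the restriction to the diagonal $z_1=z_2$, where the $K3$ surfaces of $\mathcal{F}$ and their periods reduce to elliptic-curve data, giving $X(z,z)=\tfrac{25}{27}J(z)^{-1}$, $Y(z,z)=0$ as in (\ref{XYDiag}) independently of (\ref{Ourtheta}). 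Combined with the known diagonal behavior of M\"uller's forms (\cite{Muller}: $g_2|_\square\propto E_4$, $s_6|_\square\propto\Delta$, and $s_5$ anti-symmetric, whence $s_{10}|_\square=0$), this simultaneously kills the unwanted $\mathfrak{A}^5$ and $\mathfrak{A}^2\mathfrak{B}$ terms --- your ``vanishes to higher order, being a square'' step does not achieve this without the very cusp expansions of $\mathfrak{B}$ and $\mathfrak{C}$ you lack --- and fixes the constants $2^5\cdot 5^2$ and $2^{10}\cdot 5^5$.
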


We call the divisor  
$
\square=\{(z_1,z_2)\in\mathbb{H}\times \mathbb{H}| z_1=z_2\}
$
the diagonal.
On the diagonal $\square$, it holds
\begin{align}\label{XYDiag}
X(z,z)=\frac{25}{27} \frac{1}{J(z)}, \quad \quad \quad   Y(z,z)=0  .
\end{align}

\subsection{The family $\mathcal{F}_{CD}=\{S_{CD} (\alpha,\beta,\gamma,\delta)\}$ of $K3$ surfaces}

In \cite{CD} and \cite{NaganoShiga}, the family $\mathcal{F}_{CD}=\{S_{CD} (\alpha,\beta,\gamma,\delta)|(\alpha:\beta:\gamma:\delta)  \in  \mathbb{P}(2:3:5:6)-\{ \gamma=\delta=0 \} \}$ of $K3$ surfaces is studied in detail, 
where
\begin{align}\label{Mother}
S_{CD}(\alpha:\beta:\gamma:\delta): y^2 =x^3 +(-3 \alpha t^4 - \gamma t^5)x +(t^5 -2\beta t^6 + \delta t^7).
\end{align}
The defining equation (\ref{Mother}) gives the structure of an elliptic surface $(x,y,t) \mapsto t$ with
the  singular fibres  $II^*+5 I_1 + III^*$. 
From this elliptic fibration,
we can obtain a marked $K3$ surface and prove the following theorem.

\begin{rem}
In this paper, we use  the notation $\mathcal{F}_{CD}$ and $S_{CD}$,
since the surface (\ref{Mother}) appears in the paper of Clingher and Doran \cite{CD}. 
On the other hand,
Kumar \cite{Kumar} independently studied elliptic $K3$ surfaces with the same singular fibres.
So, we need to recall  Kumar's contribution.
\end{rem}

\begin{prop}(\cite{NaganoShiga}, Section 2 and 3) \label{CDNS}
(1) If an elliptic $K3$ surface $S$ with the elliptic fibration $(x,y,t)\mapsto t$  has the singular fibres  of type $II^* $ at $t=0$,   $III^*$ at $t=\infty$ and other five fibres  of type $I_1$,
then $S$ is given by the Weierstrass equation  in  (\ref{Mother}).

(2) For generic $(\alpha:\beta:\gamma:\delta)  \in  \mathbb{P}(2:3:5:6)-\{ \gamma=\delta=0 \} $, the N\'eron-Severi lattice ${\rm NS}(S_{CD}(\alpha:\beta:\gamma:\delta)$ is given by the intersection matrix $E_8(-1) \oplus E_7(-1) \oplus U=M_0$ and 
the transcendental lattice ${\rm Tr}(S_{CD}(\alpha:\beta:\gamma:\delta))$ is given by the intersection matrix $U \oplus U \oplus \langle -2 \rangle=A_0.$

(3) The family $\mathcal{F}_{CD}=\{S_{CD}(\alpha:\beta:\gamma:\delta)\}$ gives the  isomorphism classes of $M_0$-polarized $K3 $ surfaces.
Especially, $S_{CD}(\alpha_1:\beta_1:\gamma_1:\delta_1)$ and $S_{CD}(\alpha_2:\beta_2:\gamma_2:\delta_2)$ are isomorphic as $M_0$-polarized $K3$ surfaces if and only if $(\alpha_1:\beta_1:\gamma_1:\delta_1)=(\alpha_2:\beta_2:\gamma_2:\delta_2)$
in $\mathbb{P}(2:3:5:6)$.

\end{prop}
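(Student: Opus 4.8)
The plan is to treat the three parts in turn, using the Kodaira--N\'eron theory of elliptic surfaces together with the Shioda--Tate formula and the Torelli theorem for lattice-polarized $K3$ surfaces.

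\textbf{Part (1).} First I would start from the general Weierstrass model of an elliptic $K3$ surface with section, $y^2 = x^3 + A(t) x + B(t)$ with $\deg A \le 8$, $\deg B \le 12$ and discriminant $\Delta = 4A^3 + 27 B^2$ of degree $24$ (the Euler number). Each prescribed fibre is then converted into vanishing conditions through the Kodaira/Tate classification, read off from the triple $(v(A), v(B), v(\Delta))$: a fibre of type $II^*$ at $t=0$ forces $v_0(A) \ge 4$, $v_0(B) = 5$, $v_0(\Delta)=10$; a fibre of type $III^*$ at $t=\infty$ forces, after the chart change $t = 1/s$, $x = X/s^4$, $y = Y/s^6$, the orders $v_\infty(A) = 3$, $v_\infty(B) \ge 5$, $v_\infty(\Delta)=9$, i.e. $\deg A = 5$ and $\deg B \le 7$. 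Combining the conditions at $0$ and $\infty$ yields $A(t) = a_4 t^4 + a_5 t^5$ and $B(t) = b_5 t^5 + b_6 t^6 + b_7 t^7$ with $b_5 \ne 0$, and the five remaining fibres are forced to be of type $I_1$ once $\deg\Delta = 24 = 10 + 9 + 5$ and the parameters are generic. The final step is to normalize $b_5 = 1$ using the coordinate changes $t \mapsto \lambda t$, $(x,y) \mapsto (u^2 x, u^3 y)$ (the only ones fixing the two special fibres): the constraint $u^6 = \lambda^5$ leaves a one-parameter residual scaling, and writing $a_4 = -3\alpha$, $a_5 = -\gamma$, $b_6 = -2\beta$, $b_7 = \delta$ one checks this residual $\mu = \lambda^{1/3}$ acts by $(\alpha,\beta,\gamma,\delta) \mapsto (\mu^2\alpha, \mu^3\beta, \mu^5\gamma, \mu^6\delta)$, producing precisely the weighted projective structure $\mathbb{P}(2:3:5:6)$ and the equation (\ref{Mother}).

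\textbf{Part (2).} Here I would read off the trivial (frame) lattice of the fibration: the zero section and a general fibre span a copy of $U$ with Gram matrix $\left(\begin{smallmatrix} -2 & 1 \\ 1 & 0 \end{smallmatrix}\right)$, the non-identity components of the $II^*$ and $III^*$ fibres span $E_8(-1)$ and $E_7(-1)$ respectively, and the irreducible $I_1$ fibres contribute nothing. By the Shioda--Tate formula $\mathrm{rank}\,\mathrm{NS} = 2 + 8 + 7 + \mathrm{rank}\,\mathrm{MW}$, so for a generic member (Mordell--Weil group trivial) one gets $\mathrm{rank}\,\mathrm{NS} = 17$ and $\mathrm{NS} = U \oplus E_8(-1) \oplus E_7(-1) = M_0$. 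The transcendental lattice is then the orthogonal complement of $M_0$ in $\Lambda_{K3} = U^{\oplus 3} \oplus E_8(-1)^{\oplus 2}$; since the orthogonal complement of $E_7(-1)$ inside a single $E_8(-1)$ is $\langle -2 \rangle$, the complement equals $U \oplus U \oplus \langle -2 \rangle = A_0$.

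\textbf{Part (3).} For the moduli statement I would invoke Dolgachev's Torelli theorem for $M_0$-polarized $K3$ surfaces. The moduli space has dimension $20 - \mathrm{rank}\,M_0 = 3 = \dim \mathbb{P}(2:3:5:6)$. For injectivity: an isomorphism of $M_0$-polarized surfaces must respect the marking by $M_0$, hence fix the distinguished isotropic fibre class $F$, hence carry the elliptic fibration with section of one surface to that of the other; it is therefore induced by a transformation of the Weierstrass data, which acts on $(\alpha:\beta:\gamma:\delta)$ exactly by the weighted $\mathbb{C}^\ast$ computed in Part (1). Thus distinct points of $\mathbb{P}(2:3:5:6)$ give non-isomorphic $M_0$-polarized surfaces, while the converse is immediate; surjectivity onto the $M_0$-polarized locus follows from the dimension count and surjectivity of the period map. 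The hard part will be the genericity and integrality claims underlying Part (2): namely establishing that the generic Picard number is \emph{exactly} $17$ (combining the lower bound from the trivial lattice with an upper bound via a Noether--Lefschetz/period argument or a single explicit rank-$17$ member), and, crucially, that the Mordell--Weil group is not merely of rank $0$ but \emph{trivial} for the generic member, so that the possible $\mathbb{Z}/2$ torsion permitted by the discriminant group of $E_7(-1)$ is excluded and the trivial lattice is saturated, giving $\mathrm{NS} = M_0$ on the nose. One must also confirm that the embedding $M_0 \hookrightarrow \mathrm{NS}$ is realized by effective/ample classes so that the Torelli theorem applies, as flagged in the ampleness remark.
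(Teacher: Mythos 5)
The paper gives no proof of this proposition at all: it is imported from \cite{NaganoShiga} (Sections 2 and 3), with only the preceding remark that the equation (\ref{Mother}) defines an elliptic fibration with singular fibres $II^*+5I_1+III^*$ from which a marked $K3$ surface is obtained, so there is nothing in this paper to compare against line by line. Your reconstruction --- Kodaira/Tate order-of-vanishing analysis and the weighted $(2{:}3{:}5{:}6)$ rescaling for (1), Shioda--Tate plus the orthogonal-complement computation $\big(U\oplus E_8(-1)\oplus E_7(-1)\big)^{\perp}=U\oplus U\oplus\langle -2\rangle$ for (2), and lattice-polarized Torelli for (3) --- is correct and is essentially the argument of the cited reference, and the issues you flag as the remaining work (generic Picard number exactly $17$, triviality of the Mordell--Weil group including torsion, which is excluded by the height formula since the $II^*$ fibre has trivial component group, and the ampleness of the marking noted in Remark \ref{RemM0Ample}) are precisely the points that reference settles.
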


Let $\mathcal{D}_{0} =\{\xi\in \mathbb{P}^4(\mathbb{C}) |   {}^t \xi A_0 \xi =0, {}^t \xi A_0 \overline{\xi} >0\} $.
The period domain for the family $\mathcal{F}_{CD}$ is given by the quotient space 
$ PO(A_0,\mathbb{Z}) \backslash \mathcal{D}_{0} $,
where $PO(A_0,\mathbb{Z})=\{\ M\in GL(4,\mathbb{Z}) | {}^t M A_0 M=A_0 \}$.
In fact, 
there exists a holomorphic  mapping 
$\mathcal{D}_0 \rightarrow \mathfrak{S}_2$
such that this mapping induces the isomorphism  
$ PO(A_0,\mathbb{Z}) \backslash \mathcal{D}_{0}  \simeq Sp(4,\mathbb{Z}) \backslash \mathfrak{S}_2 =\mathcal{A}_2$.
The transcendental lattice ${\rm Tr}(S_{CD}(\alpha:\beta:\gamma:\delta))$ is Hodge isometric to the transcendental lattice ${\rm Tr}(A)$ of  a generic principally polarized abelian surface 
and the family $\mathcal{F}_{CD}$ gives the same variations of Hodge structures of weight $2$  with  the family of  principally polarized abelian surfaces (see \cite{NaganoShiga} Section 3).

\begin{rem}\label{RemM0Ample}
In \cite{NaganoShiga} Section3,
the primitive lattice embedding $\iota:M_0 \hookrightarrow {\rm NS}(S_{CD}(\alpha:\beta:\gamma:\delta))$ of $M_0$-polarized $K3$ surfaces is attained by taking appropriate effective divisors of $S_{CD}(\alpha:\beta:\gamma:\delta)$.
This argument guarantees an ampleness of lattice polarized $K3$ surfaces and 
 it is safe to apply the Torelli theorem for lattice polarized $K3$ surface to our family $\mathcal{F}_{CD}$.
\end{rem}

Let $\mathcal{M}_2$ be the moduli space of genus $2$ curves.
Let $\mathbb{P}(1:2:3:5)=\{(\zeta_1:\zeta_2:\zeta_3:\zeta_5)\}$ be the weighted projective space.
It is well-known that 
$
\mathcal{M}_2=\mathbb{P}(1:2:3:5) -\{\zeta_5=0\}.
$
In fact,
by the Igusa-Clebsch invariants $I_2,I_4,I_6,I_{10}$ of degree $2,4,6,10$ for a genus $2$ curve,
$(I_2:I_4:I_6:I_{10})$ gives a well-defined point of the moduli space $\mathcal{M}_2$.
We note that the moduli space $\mathcal{M}_2$ is a Zariski open set of the moduli space $\mathcal{A}_2$ of principally polarized abelian surfaces ($\mathcal{M}_2$ is the complement of the divisor given by  the points corresponding to the product of elliptic curves).

By a study of elliptic $K3$ surfaces, we can prove the following proposition.

\begin{prop}
(\cite{Kumar}, \cite{CD} see also \cite{NaganoShiga})
The  point 
$(I_2:I_4:I_6:I_{10})\in \mathcal{M}_2= \mathbb{P}(1:2:3:5)-\{\zeta_5=0\}$ 
corresponds to the point 
$(\alpha:\beta:\gamma:\delta)\in \mathbb{P}(2:3:5:6)-\{\gamma=\delta=0\}$
of the moduli space of $M_0$-polarized $K3$ surfaces
by the following birational transformation:
\begin{align}\label{abcd}
 \alpha=\frac{1}{9} I_4 ,\quad
  \beta=\frac{1}{27} (-I_2 I_4 +3 I_6),\quad
 \gamma= 8 I_{10},  \quad
 \delta=\frac{2}{3} I_2 I_{10}.
\end{align}
\end{prop}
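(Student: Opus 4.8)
The plan is to realize the correspondence geometrically through the Shioda--Inose construction and then carry out an explicit comparison of Weierstrass equations. By Proposition \ref{CDNS} together with the isomorphism $PO(A_0,\mathbb{Z})\backslash\mathcal{D}_0 \simeq Sp(4,\mathbb{Z})\backslash\mathfrak{S}_2 = \mathcal{A}_2$, the family $\mathcal{F}_{CD}$ already carries the same weight-$2$ variation of Hodge structures as the family of principally polarized abelian surfaces; hence the assignment $A\mapsto S_{CD}$ is a well-defined morphism at the level of moduli. Since a generic principally polarized abelian surface is the Jacobian of a genus $2$ curve $C$, I would fix such a $C$ with affine model $Y^2 = f(X)$ for a sextic $f$, and produce the $M_0$-polarized $K3$ surface attached to $\mathrm{Jac}(C)$ concretely as an Inose-type elliptic fibration on the associated Shioda--Inose surface (this is exactly the route of Kumar and of Clingher--Doran, whose transcendental lattice $A_0 = U\oplus U \oplus \langle -2\rangle$ is indeed $\mathrm{Tr}(\mathrm{Jac}(C))$ for generic $C$). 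Computing the Weierstrass coefficients of this fibration yields polynomials in the coefficients of $f$, which by $SL_2$-invariance descend to polynomials in the Igusa--Clebsch invariants $I_2,I_4,I_6,I_{10}$.

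The second step is to normalize the fibration so that its singular fibres are exactly $II^* + 5I_1 + III^*$ as in \eqref{Mother}; by Proposition \ref{CDNS}(1) this configuration forces the Weierstrass form $S_{CD}(\alpha:\beta:\gamma:\delta)$, and the normalization pins down the elliptic parameter $t$ and an overall scaling. Reading off the coefficients of $t^4,t^5,t^6,t^7$ then produces the sought expressions for $\alpha,\beta,\gamma,\delta$.

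To organize and check the answer, I would exploit two structural constraints. First, the weights must match: in $\mathbb{P}(1:2:3:5)$ the invariants $I_2,I_4,I_6,I_{10}$ carry weights $1,2,3,5$, so each target coordinate must be a weighted-homogeneous polynomial in them of the appropriate weight $2,3,5,6$; this already confirms that \eqref{abcd} has the correct homogeneity to define a map of weighted projective spaces. Second, the boundary locus $\{\zeta_5=0\} = \{I_{10}=0\}$, where the abelian surface degenerates to a product of elliptic curves, must map into the excluded locus $\{\gamma=\delta=0\}$ of $\mathcal{F}_{CD}$; this forces $I_{10}$ to divide both $\gamma$ and $\delta$, so that $\gamma = c\,I_{10}$ and $\delta = c'\,I_2 I_{10}$ for constants $c,c'$. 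The finitely many remaining coefficients in $\alpha$ and $\beta$, together with $c$ and $c'$, are then pinned down by the explicit Weierstrass computation, which in particular shows that the monomials $I_2^2$ in $\alpha$ and $I_2^3$ in $\beta$ do not occur.

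Finally, birationality follows once the map is shown to be dominant: since both $\mathcal{M}_2 = \mathbb{P}(1:2:3:5)-\{\zeta_5=0\}$ and the moduli of $M_0$-polarized $K3$ surfaces are $3$-dimensional and generically parametrize the same principally polarized abelian surfaces, the morphism \eqref{abcd} is generically injective, hence birational; an explicit inverse recovering $(I_2:I_4:I_6:I_{10})$ from $(\alpha:\beta:\gamma:\delta)$ can be written down to make this effective. I expect the main obstacle to be the explicit Weierstrass computation and, above all, the correct normalization fixing the scaling and the parameter $t$, since it is precisely this bookkeeping that determines the numerical constants $\tfrac{1}{9},\tfrac{1}{27},8,\tfrac{2}{3}$: the weight and boundary constraints cut down the shape of the answer but do not by themselves produce these constants.
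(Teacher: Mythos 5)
Your proposal is correct and follows essentially the same route as the paper's source for this statement: the paper itself offers no proof (the proposition is quoted from \cite{Kumar}, \cite{CD} and \cite{NaganoShiga}, with only the remark ``by a study of elliptic $K3$ surfaces''), and those references argue exactly as you do, by putting the Inose-type elliptic fibration with singular fibres $II^*+5I_1+III^*$ on the Shioda--Inose surface of $\mathrm{Jac}(C)$, invoking the uniqueness of the Weierstrass form (Proposition \ref{CDNS}(1)) to land in the family (\ref{Mother}), and reading off $\alpha,\beta,\gamma,\delta$ as $SL_2$-invariant polynomials in the coefficients of the sextic, i.e.\ in $I_2,I_4,I_6,I_{10}$. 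Your supplementary checks are also sound, including birationality via the explicit inverse $I_4=9\alpha$, $I_{10}=\gamma/8$, $I_2=12\delta/\gamma$, $I_6=9\beta+36\alpha\delta/\gamma$.
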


The Humbert surface $\mathcal{H}_5$ 
is a subvariety of the moduli space $\mathcal{A}_2$.
Hence, the defining equation of  $\mathcal{H}_5$ can be described by the equation in $(\alpha:\beta:\gamma:\delta)\in\mathbb{P}(2:3:5:6).$
By an observation of 
the elliptic fibration given by (\ref{Mother}),
we can prove the following theorem.
Especially, the equation (\ref{modular5})  shall give the defining equation of $\mathcal{H}_5$.

\begin{prop}(\cite{NaganoShiga} Theorem 4.4 ) \label{NS4.4}

(1)
If and only if the equation
\begin{align}\label{modular5}
(-\alpha^3 -\beta^2 +\delta)^2- 4 \alpha (\alpha \beta -\gamma)^2=0
\end{align}
holds,
there exists a non trivial section $s_5$ of  $\{S_{CD}(\alpha:\beta:\gamma:\delta)\}$ as illustrated in Figure 1.

(2) 
If the modular equation (\ref{modular5}) holds, 
the N\'eron-Severi lattice of the $K3$ surface $S _{CD}(\alpha:\beta:\gamma:\delta)$ is generically given by the intersection matrix
$
M_5.$
\end{prop}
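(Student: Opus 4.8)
The plan is to prove (1) by exhibiting the section explicitly as a pair of polynomials, and (2) by combining the Shioda--Tate formula with a direct lattice computation.

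For (1), a nontrivial section $s_5$ of the elliptic fibration (\ref{Mother}) is the same datum as a point $(x(t),y(t))$ with $x(t),y(t)\in\mathbb{C}[t]$ satisfying the Weierstrass equation over $\mathbb{C}(t)$ and extending regularly over $t=0$ and $t=\infty$. The singular fibres $II^*$ at $t=0$ and $III^*$ at $t=\infty$, together with the requirement that $s_5$ be a section of a $K3$ surface, bound the degrees of $x(t)$ and $y(t)$; first I would read off these bounds from the local behaviour at $t=0$ and $t=\infty$, in particular from the orders of vanishing forced by passing through the prescribed fibre components drawn in Figure 1. With the degrees fixed, I set $x(t)$ to be a polynomial with undetermined coefficients and impose that $x(t)^3+(-3\alpha t^4-\gamma t^5)x(t)+(t^5-2\beta t^6+\delta t^7)$ be a perfect square in $\mathbb{C}[t]$, its square root being $y(t)$.

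Next I would expand this perfect-square condition and compare coefficients of the powers of $t$, obtaining a system of polynomial equations in the coefficients of $x(t)$ and the parameters $\alpha,\beta,\gamma,\delta$. Eliminating the coefficients of $x(t)$ should leave a single compatibility condition on $(\alpha:\beta:\gamma:\delta)$, which I expect to be precisely (\ref{modular5}); conversely, on the locus (\ref{modular5}) the system becomes consistent and one reconstructs $x(t),y(t)$ explicitly. This yields the equivalence claimed in (1).

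For (2) I would invoke the Shioda--Tate formula. By Proposition \ref{CDNS} a generic member has ${\rm NS}=M_0$ of rank $17$ with trivial Mordell--Weil group, the summands being $E_8(-1)$ from the $II^*$ fibre, $E_7(-1)$ from the $III^*$ fibre, and $U$ from the zero section and a general fibre. The section $s_5$ produced in (1) is non-torsion, hence of positive height, so it raises the Mordell--Weil rank by one and the Picard number to $18$; since (\ref{modular5}) cuts out a divisor in $\mathbb{P}(2:3:5:6)$, the rank is exactly $18$ at a generic point of it. It then remains to identify the rank-$18$ lattice: I would write down an explicit basis consisting of the zero section, a general fibre, the non-identity components of the $II^*$ and $III^*$ fibres, and $s_5$, compute the $18\times 18$ intersection matrix from the incidence data of Figure 1 together with the Shioda height pairing, and check that the resulting even lattice of signature $(1,17)$ and discriminant $5$ is isometric to $M_5$. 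The essential feature is that $s_5$ meets a non-identity component of the $III^*$ fibre, which is what raises the discriminant from the value $2$ of $M_0$ to the value $5$ of $M_5$.

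The main obstacle will be the elimination step in (1): choosing the degree ansatz correctly from the $II^*$ and $III^*$ data and pushing the resolvent computation far enough to see that the compatibility condition is exactly (\ref{modular5}), rather than a factor or a multiple of it. A secondary delicate point in (2) is verifying that the lattice generated by $s_5$ is all of $M_5$ and not a proper finite-index sublattice; this can be confirmed by matching discriminant groups and discriminant forms (both of order $5$) via Nikulin's theory, using the known transcendental lattice $A_5=U\oplus\begin{pmatrix}2&1\\1&-2\end{pmatrix}$ as the orthogonal complement.
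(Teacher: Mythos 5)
Your proposal takes essentially the same route as the proof the paper relies on (the paper cites \cite{NaganoShiga}, Theorem 4.4 rather than reproving the statement): the section in (1) is produced by precisely your degree-bounded ansatz and elimination --- the paper records the resulting section explicitly in (\ref{section555}), with $x(t)$ quadratic and $y(t)$ cubic in $t$ --- and (2) follows by adjoining that section, which meets a non-identity component of the $III^*$ fibre, to the trivial lattice $U\oplus E_8(-1)\oplus E_7(-1)$, raising the discriminant from $2$ to $2\cdot\tfrac{5}{2}=5$. Your Shioda--Tate and discriminant-form bookkeeping is the standard way to finish the identification with $M_5$, so the proposal is sound and essentially identical in strategy.
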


We call the equation (\ref{modular5}) the modular equation for $\Delta=5$.

\begin{figure}[h]
\center
\includegraphics[scale=0.6]{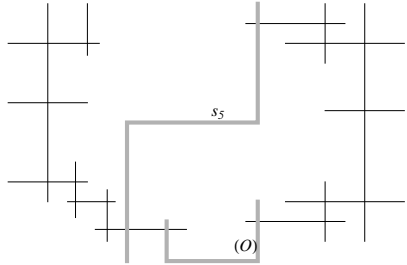}
\caption{The section $s_5$.}
\end{figure}

\begin{rem}\label{RemarkModular8}
The modular equation (\ref{modular5}) for  $\Delta=5$ is much simpler than  modular equations  for other discriminants. 
For example,
the modular equation for  $\Delta=8$ is given by
{\footnotesize
\begin{align}\label{Mod8}
\notag &1024 \alpha^{15} - 5120 \alpha^{12} \beta^2 + 10240 \alpha^9 \beta^4 - 10240 \alpha^6 \beta^6 + 
 5120 \alpha^3 \beta^8 - 1024 \beta^{10} - 941056 \alpha^{11} \beta \gamma\\
 \notag & + 
 1053696 \alpha^8 \beta^3 \gamma + 715776 \alpha^5 \beta^5 \gamma - 828416 \alpha^2 \beta^7 \gamma - 
 7556464 \alpha^10 \gamma^2 + 131492384 \alpha^7 \beta^2 \gamma^2 \\
\notag  &+ 
 39076880 \alpha^4 \beta^4 \gamma^2 + 13934400 \alpha \beta^6 \gamma^2 + 
 1491324088 \alpha^6 \beta \gamma^3 - 918848440 \alpha^3 \beta^3 \gamma^3 \\
 \notag &  - 
 36968000 \beta^5 \gamma^3 + 13611473901 \alpha^5 \gamma^4 - 
 718342500 \alpha^2 \beta^2 \gamma^4 + 9079601250 \alpha \beta \gamma^5 + 7737809375 \gamma^6 \\
\notag  &- 
 343808 \alpha^12 \delta - 647168 \alpha^9 \beta^2 \delta + 2234880 \alpha^6 \beta^4 \delta - 
 1153024 \alpha^3 \beta^6 \delta - 90880 \beta^8 \delta + 2442144 \alpha^8 \beta \gamma \delta \\
\notag &- 
 86206272 \alpha^5 \beta^3 \gamma \delta + 12985248 \alpha^2 \beta^5 \gamma \delta - 
 1669045416 \alpha^7 \gamma^2 \delta - 1449171160 \alpha^4 \beta^2 \gamma^2 \delta \\
\notag &+ 
 268484800 \alpha \beta^4 \gamma^2 \delta - 157452560 \alpha^3 \beta \gamma^3 \delta - 
 772939000 \beta^3 \gamma^3 \delta - 15745060125 \alpha^2 \gamma^4 \delta \\
\notag &+ 
 29370256 \alpha^9 \delta^2 - 56832480 \alpha^6 \beta^2 \delta^2 + 
 37166352 \alpha^3 \beta^4 \delta^2 - 2626240 \beta^6 \delta^2 + 
 1230170496 \alpha^5 \beta \gamma \delta^2 \\
\notag &
 - 155485248 \alpha^2 \beta^3 \gamma \delta^2 
- 
 27876720 \alpha^4 \gamma^2 \delta^2 + 2388102200 \alpha \beta^2 \gamma^2 \delta^2 - 
 2315093000 \beta \gamma^3 \delta^2 - 86058160 \alpha^6 \delta^3 \\
 &\notag
 + 
 3605888 \alpha^3 \beta^2 \delta^3 
  - 22815760 \beta^4 \delta^3 - 
 1231671584 \alpha^2 \beta \gamma \delta^3 + 1704478600 \alpha \gamma^2 \delta^3 + 
 85375664 \alpha^3 \delta^4\\
 & + 53878880 \beta^2 \delta^4 - 28344976 \delta^5=0.
\end{align}
}
We can check that for a generic $(\alpha,\beta,\gamma,\delta)$ satisfying (\ref{Mod8}),
the transcendental lattice is given by $U\oplus \begin{pmatrix} 2 & 2 \\ 2 & -2 \end{pmatrix}$. 
So, (\ref{Mod8}) gives  a counterpart  of the equation (\ref{modular5}).
\end{rem}

\subsection{The embedding $\Psi_5: \mathbb{P}(1:3:5) \hookrightarrow \mathbb{P}(2:3:5:6)$}

The family of $M_5$-polarized $K3$ surfaces is a subfamily of the family of $M_0$-polarized $K3$ surface.
Therefore, Proposition \ref{K3Kiso}, \ref{CDNS} and \ref{NS4.4} imply that the family $\mathcal{F}=\{S(\mathfrak{A}:\mathfrak{B}:\mathfrak{C}) \}$  is a subfamily of the family $\mathcal{F}_{CD}=\{S_{CD}(\alpha:\beta:\gamma:\delta)\}$. 
Moreover, together with Remark \ref{HumbertHilbertRem},
the modular equation (\ref{modular5}) gives the defining equation of the Humbert surface $\mathcal{H}_5$.
In this subsection, 
we realize the embedding $\mathcal{F}\hookrightarrow \mathcal{F}_{CD}$ explicitly.
This is given by the embedding $\Psi_5: \mathbb{P}(1:3:5) \hookrightarrow \mathbb{P}(2:3:5:6) $
of varieties.

Since the modular equation (\ref{modular5}) for $\Delta=5$ is very simple and the coordinates $\mathfrak{A},\mathfrak{B}$ and $\mathfrak{C}$
have the explicit theta expressions (\ref{Ourtheta}) via the period mapping  for the family $\mathcal{F}$,  
we can study the pull-back $\Psi_5^* (V)$ of a variety $V\subset \mathbb{P}(2:3:5:6)$ quite effectively.
Especially,
in Section 4 and 5,
we shall consider the pull-back $\Psi_5^*(\mathcal{S}_D)$ of the Shimura curve $\mathcal{S}_D $ for $D=6,10,14$ and $15$ in $\mathbb{P}(2:3:5:6)$.

\begin{lem}\label{LemmaEmb5}
The elliptic $K3$ surface $S(\mathfrak{A}:\mathfrak{B}:\mathfrak{C}) $ is birationally equivalent to the elliptic $K3$ surface 
\begin{align}\label{II^*III^*5}
y_0^2= s^3 + \Big( -\frac{25}{12} \mathfrak{A}^2 u^4 -\frac{1}{32} \mathfrak{C} u^5  \Big)s +\Big(u^5 +(\frac{125}{108} \mathfrak{A}^3 -\frac{5}{4} \mathfrak{B})u^6 + (\frac{25}{64} \mathfrak{B}^2 -\frac{5}{96} \mathfrak{A}\mathfrak{C})u^7  \Big).
\end{align}
The elliptic  fibration $\pi:(s,u,y_0) \mapsto u$ given by (\ref{II^*III^*5}) has singular fibres  
$\pi^{-1}(0)$ of type $II^*$, $\pi^{-1}(\infty)$ of type $III^*$ and other 
five singular fibres of type $ I_1$.
\end{lem}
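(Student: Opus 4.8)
The two Weierstrass equations (\ref{S(ABC)}) and (\ref{II^*III^*5}) present the \emph{same} $K3$ surface through \emph{different} elliptic pencils. A short Tate analysis of (\ref{S(ABC)}) in the base parameter $y$ gives additive reduction of type $IV^*$ over $y=0$, so its reducible-fibre configuration already differs from the pair $(II^*,III^*)$ required of $\pi$; hence (\ref{II^*III^*5}) cannot be reached from (\ref{S(ABC)}) by a M\"obius change of the base parameter together with a fibrewise Weierstrass transformation alone, since those operations preserve the multiset of Kodaira types. Accordingly the plan is in two parts: first produce an explicit birational map from $S(\mathfrak{A}:\mathfrak{B}:\mathfrak{C})$ onto the surface (\ref{II^*III^*5}) that effects the change of pencil, and then read off the Kodaira types of $\pi$ from the discriminant of (\ref{II^*III^*5}) by Tate's algorithm.

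To build the map I would perform a two-neighbor step in the sense of Elkies and Kumar \cite{EK}, starting from the $y$-fibration of (\ref{S(ABC)}). This step is dictated by lattice theory: Proposition \ref{K3Kiso} gives $\mathrm{NS}(S(\mathfrak{A}:\mathfrak{B}:\mathfrak{C}))=M_5$, which contains the lattice $M_0=E_8(-1)\oplus E_7(-1)\oplus U$ of Proposition \ref{CDNS}, and the summands $E_8(-1)$ and $E_7(-1)$ are exactly the root lattices supporting a $II^*$ and a $III^*$ fibre. Selecting the genus-one pencil in $M_5$ whose fibre class realises this $E_8+E_7$ configuration produces the new parameter $u$ as an explicit rational function of $(x,y,z)$; substituting it, completing the cube in the fibre coordinate, and clearing denominators carries (\ref{S(ABC)}) into the normal form (\ref{II^*III^*5}) with new fibre coordinates $s,y_0$. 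Since the construction is a composition of invertible transformations between relatively minimal elliptic $K3$ surfaces, the resulting rational map has degree one and is therefore birational.

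For the fibre types, write (\ref{II^*III^*5}) as $y_0^2=s^3+A(u)\,s+B(u)$ and form $\Delta(u)=-16\bigl(4A(u)^3+27B(u)^2\bigr)$. Over $u=0$ one has $A=-\tfrac{25}{12}\mathfrak{A}^2u^4+O(u^5)$ and $B=u^5+O(u^6)$, so $(\mathrm{ord}_0A,\mathrm{ord}_0B,\mathrm{ord}_0\Delta)=(4,5,10)$, the Tate signature of a $II^*$ fibre. Passing to $u=1/v$ with $s\mapsto s/v^4$ and $y_0\mapsto y_0/v^6$ turns the coefficients into $-\tfrac{1}{32}\mathfrak{C}v^3+O(v^4)$ and $c_7v^5+O(v^6)$, where $c_7=\tfrac{25}{64}\mathfrak{B}^2-\tfrac{5}{96}\mathfrak{A}\mathfrak{C}$, giving orders $(3,5,9)$ at $v=0$, the signature of a $III^*$ fibre. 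Finally $\deg_u\Delta=15$ with the factor $u^{10}$ removed, so the residual discriminant has degree $5$ and, for generic $(\mathfrak{A}:\mathfrak{B}:\mathfrak{C})$, five simple roots away from $0$ and $\infty$, producing five $I_1$ fibres. The count $e(II^*)+e(III^*)+5\,e(I_1)=10+9+5=24=e(\mathrm{K3})$ confirms there are no further singular fibres.

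The real obstacle is the first part: identifying the correct new parameter $u$ together with the accompanying Weierstrass transformation. Because the $IV^*$-pencil of (\ref{S(ABC)}) and the $(II^*,III^*)$-pencil of (\ref{II^*III^*5}) are inequivalent, $u$ genuinely has to be a new elliptic divisor class, and locating it is precisely where the explicit description of the $M_5$-polarization on $S(\mathfrak{A}:\mathfrak{B}:\mathfrak{C})$ from \cite{NaganoTheta} is needed. Once $u$ is in hand, verifying that the substitution reproduces (\ref{II^*III^*5}) and running Tate's algorithm on the result are routine, if lengthy, computations.
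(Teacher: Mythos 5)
Your overall strategy---replace the given pencil by a new elliptic pencil via a neighbor step, then read off the Kodaira types from Tate's algorithm---is the same as the paper's, and the parts you actually execute are correct: the $y$-fibration of (\ref{S(ABC)}) does have a $IV^*$ fibre at $y=0$, so a genuine change of fibration is required, and your order computations $(4,5,10)$ at $u=0$, $(3,5,9)$ at $u=\infty$, together with the residual degree-$5$ discriminant, do give the configuration $II^*+5I_1+III^*$ for (\ref{II^*III^*5}). The genuine gap is that the lemma is not an existence statement. Its content is the \emph{specific} coefficients $-\tfrac{25}{12}\mathfrak{A}^2$, $-\tfrac{1}{32}\mathfrak{C}$, $\tfrac{125}{108}\mathfrak{A}^3-\tfrac{5}{4}\mathfrak{B}$, $\tfrac{25}{64}\mathfrak{B}^2-\tfrac{5}{96}\mathfrak{A}\mathfrak{C}$, which are exactly what is compared with (\ref{Mother}) to define the embedding $\Psi_5$ in Theorem \ref{PsiThm}. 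Your lattice-theoretic argument (an $E_8+E_7$ configuration inside $M_5$), combined with Proposition \ref{CDNS}, can only show that $S(\mathfrak{A}:\mathfrak{B}:\mathfrak{C})$ is birational to $S_{CD}(\alpha:\beta:\gamma:\delta)$ for \emph{some} unspecified point $(\alpha:\beta:\gamma:\delta)$; it cannot determine that point as a function of $(\mathfrak{A}:\mathfrak{B}:\mathfrak{C})$. You yourself identify producing the explicit parameter $u$ as ``the real obstacle'' and then leave it undone, so the statement actually asserted is never proved.

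For comparison, the paper carries out the construction concretely, and in two stages rather than your proposed single step: first the substitution $x=\tfrac{x_1}{16t}$, $y=-\tfrac{x_1}{16t^2}$, $z=\tfrac{x_1y_1}{256t^4}$ turns (\ref{S(ABC)}) into $y_1^2=x_1\bigl(x_1^2+(20\mathfrak{A}t^2-20\mathfrak{B}t+\mathfrak{C})x_1+16t^5\bigr)$, an elliptic fibration with fibres $I_{10}+5I_1+III^*$; then the $2$-neighbor step (\ref{2neighbor}), with new base parameter $u_0=x/t^4$, produces a quartic double cover in $x_2$, which is converted to Weierstrass form by the method of Section 3.1 of \cite{AKMMMP} and rescaled to give exactly (\ref{II^*III^*5}). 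Note that the new parameter is extracted from the intermediate $A_9+E_7$ fibration, not from the $IV^*$ pencil directly: you never verify that the $E_8+E_7$ fibre class meets the fibre class of the $y$-fibration with intersection number $2$, so it is not clear that a single two-neighbor step from (\ref{S(ABC)}) can reach the target pencil at all; the intermediate fibration is precisely what bridges this. Filling your gap therefore amounts to reproducing these explicit transformations (or equivalent ones), which is the entire substance of the paper's proof.
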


\begin{proof}
First, by the correspondence 
$$
x=\frac{x_1}{16 t}, \quad  y=-\frac{x_1}{16 t^2}, \quad  z=\frac{x_1 y_1}{256 t^4},
$$
the surface $S(\mathfrak{A}:\mathfrak{B}:\mathfrak{C}) $ in (\ref{S(ABC)}) is transformed to
\begin{align}\label{DoubleCoverKummer}
y_1^2=x_1 (x_1^2 +(20  \mathfrak{A} t^2 -20 \mathfrak{B} t + \mathfrak{C})x + 16 t^5).
\end{align}
The elliptic surface given by (\ref{DoubleCoverKummer}) has the singular fibres of type $I_{10}+5 I_1 + III^*$.

Next, by the birational transformation
\begin{align}\label{2neighbor}
x_1=\frac{\mathfrak{C}^4 u_0^5}{x_2^4}, \quad y_1= \frac{\sqrt{\mathfrak{C}^9} u_0^5}{x_2^6}  y_2 , \quad t=\frac{\mathfrak{C} u_0}{x_2}, 
\end{align}
we have
\begin{align}\label{3.17}
y_2^2= x_2^4 + (16-20 \mathfrak{B} u_0) x_2^3 + 20 \mathfrak{A}\mathfrak{C} u_0^2 x_2 + \mathfrak{C} u_0^5. 
\end{align}
The equation (\ref{3.17}) gives a double covering of a polynomial of degree $4$ in $x_2$.
According to Section 3.1 of \cite{AKMMMP},
such a polynomial can be transformed to a Weierstrass equation.
In our case,
putting 
\begin{align*}
\begin{cases}
\vspace{2mm}
& \displaystyle x_2= \frac{6 s_1 (-4 + 5 \mathfrak{B} u_0) - 60 \mathfrak{A} \mathfrak{C} u_0^2 (-4 + 5 \mathfrak{B} u_0 ) + \sqrt{6} y_3}{6 (-96 + s_1 + 240 \mathfrak{B} u _0- 150 \mathfrak{B}^2 u_0^2 + 20 \mathfrak{A} \mathfrak{C} u_0^2)},\\
\vspace{2mm}
&\displaystyle y_2=  \frac{ 48 + s_1 - 120 \mathfrak{B} u_0 + 75 \mathfrak{B}^2 u_0^2 - 10 \mathfrak{A} \mathfrak{C} u_0^2}{3} \\
&\hspace{2cm}  \displaystyle - \frac{(27 (2 (-4 + 5 \mathfrak{B} u_0) (16 - 40 \mathfrak{B} u_0 + 25 \mathfrak{B}^2 u_0^2 - 5 \mathfrak{A} \mathfrak{C} u_0^2) +  \frac{y_3}{3 \sqrt{6}})^2)}{3 (-96 + s_1 + 240 \mathfrak{B} u_0 - 150 \mathfrak{B}^2 u_0^2 + 20 \mathfrak{A} \mathfrak{C} u_0^2)^2},
\end{cases}
\end{align*}
we have the Weierstrass equation 
\begin{align}\label{3.18}
y_3^2= \text{polynomial in } s_1 \text{ of degree } 3.
\end{align}
Put
$$
s_1=3 \mathfrak{C} s, \quad y_3=\sqrt{27 \mathfrak{C}^3}y_0, \quad u_0=\frac{u}{2},  
$$
to  (\ref{3.18}).
Then, 
we have (\ref{II^*III^*5}).
\end{proof}

\begin{rem}
The transformation in (\ref{2neighbor}) gives an example of 2-neighbor step,
that is a method  to find a new elliptic fibration. 
By (\ref{2neighbor}),
we have
$
\displaystyle  u_0=\frac{x}{t^4}.
$
The new parameter $u_0$ has a pole of order $4$ at $t=0$.
This implies that we have a singular fibre of type $III^*$ at $u_0=\infty$
(Figure 2).
\begin{figure}[h]
\center
\includegraphics[scale=0.6]{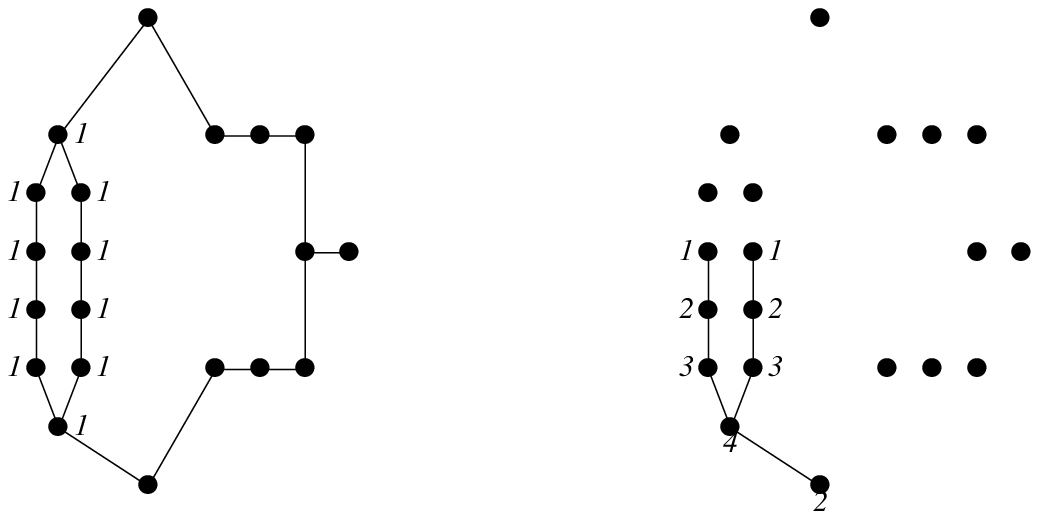}
\caption{2-neiborgh step in (\ref{2neighbor})}
\end{figure}
\end{rem}

\begin{rem}
The section $s_5$ in Proposition \ref{NS4.4} (1) has the explicit form
\begin{align}\label{section555}
t & \mapsto (x(t),y(t),t)  \notag \\
&=\Big( \frac{32}{Y} + \frac{40  X}{Y} t+ \frac{5  (15 X^2 - 2 Y)}{6 Y} t^2 , \frac{128 \sqrt{2}}{Y^{3/2}} + \frac{240 \sqrt{2}  X}{Y^{3/2}} t  - \frac{10 \sqrt{2}  (-15 X^2 + Y)}{Y^{3/2}} t^2 - \frac{25  (-5 X^3 + X Y)}{2 \sqrt{2} Y^{3/2}} t^3 ,t \Big)
\end{align}
\end{rem}

\begin{thm}\label{PsiThm}
The point $(\alpha:\beta:\gamma:\delta)\in \mathbb{P}(2:3:5:6)$ satisfies the modular equation (\ref{modular5})
if and only if the point $(\alpha:\beta:\gamma:\delta)$
is in the image of the 
embedding $\Psi_5 : \mathbb{P}(1:3:5) \rightarrow  \mathbb{P}(2:3:5:6) $ given by
\begin{align*}
(\mathfrak{A}:\mathfrak{B}:\mathfrak{C})  \mapsto  (\alpha:\beta:\gamma:\delta)   =(\alpha_5(\mathfrak{A}:\mathfrak{B}:\mathfrak{C}):\beta_5(\mathfrak{A}:\mathfrak{B}:\mathfrak{C}):\gamma_5(\mathfrak{A}:\mathfrak{B}:\mathfrak{C}):\delta_5(\mathfrak{A}:\mathfrak{B}:\mathfrak{C})),
\end{align*}
where
\begin{align}\label{(ABCDXY)}
\begin{cases}
\vspace{2mm}
&\alpha_5(\mathfrak{A}:\mathfrak{B}:\mathfrak{C})=\displaystyle \frac{25}{36} \mathfrak{A}^2,\\
\vspace{2mm}
&\beta_5 (\mathfrak{A}:\mathfrak{B}:\mathfrak{C})= \displaystyle  \frac{1}{2}\Big(- \frac{125}{108} \mathfrak{A}^3 + \frac{5}{4} \mathfrak{B} \Big),\\
\vspace{2mm}
&\gamma_5 (\mathfrak{A}:\mathfrak{B}:\mathfrak{C})= \displaystyle  \frac{1}{32} \mathfrak{C} , \\
&\delta_5 (\mathfrak{A}:\mathfrak{B}:\mathfrak{C})= \displaystyle \frac{25}{64} \mathfrak{B}^2 - \frac{5}{96} \mathfrak{A} \mathfrak{C}.
\end{cases}
\end{align}
\end{thm}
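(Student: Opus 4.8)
The plan is to recognize the Weierstrass model produced in Lemma~\ref{LemmaEmb5} as a member of the family $\mathcal{F}_{CD}$ and to read off the induced map on parameters. That model is an elliptic $K3$ surface whose singular fibres are of type $II^{*}$ at $u=0$, $III^{*}$ at $u=\infty$ and five $I_1$, so by Proposition~\ref{CDNS}(1) it is isomorphic to some $S_{CD}(\alpha:\beta:\gamma:\delta)$ in the normal form (\ref{Mother}). Comparing coefficients of the two Weierstrass equations under the identification $(x,y,t)=(s,y_0,u)$, the coefficient of the linear term gives $-3\alpha=-\tfrac{25}{12}\mathfrak{A}^2$ and $-\gamma=-\tfrac{1}{32}\mathfrak{C}$, while the constant term gives $-2\beta=\tfrac{125}{108}\mathfrak{A}^3-\tfrac{5}{4}\mathfrak{B}$ and $\delta=\tfrac{25}{64}\mathfrak{B}^2-\tfrac{5}{96}\mathfrak{A}\mathfrak{C}$. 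These are exactly the formulas (\ref{(ABCDXY)}). A quick check of weighted degrees ($\alpha_5$ of weight $2$, $\beta_5$ of weight $3$, $\gamma_5$ of weight $5$, $\delta_5$ of weight $6$) shows that $\Psi_5$ is a well-defined morphism $\mathbb{P}(1:3:5)\to\mathbb{P}(2:3:5:6)$, and by construction $S(\mathfrak{A}:\mathfrak{B}:\mathfrak{C})\cong S_{CD}(\Psi_5(\mathfrak{A}:\mathfrak{B}:\mathfrak{C}))$.

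Next I would prove the containment of the image in the modular locus (\ref{modular5}). By Proposition~\ref{K3Kiso}(1) the N\'eron--Severi lattice of $S(\mathfrak{A}:\mathfrak{B}:\mathfrak{C})$ is $M_5$; since the transformations of Lemma~\ref{LemmaEmb5} are isomorphisms of $K3$ surfaces, this lattice is carried to $S_{CD}(\Psi_5(\mathfrak{A}:\mathfrak{B}:\mathfrak{C}))$, so the latter is $M_5$-polarized. By Proposition~\ref{NS4.4} the equation (\ref{modular5}) is precisely the defining equation of the $M_5$-locus inside $\mathbb{P}(2:3:5:6)$, whence $\Psi_5(\mathfrak{A}:\mathfrak{B}:\mathfrak{C})$ satisfies it. This may equally be confirmed by the purely algebraic route of substituting (\ref{(ABCDXY)}) into (\ref{modular5}) and checking that the resulting weight $12$ polynomial in $\mathfrak{A},\mathfrak{B},\mathfrak{C}$ vanishes identically; the explicit section (\ref{section555}) exhibits the section $s_5$ of Proposition~\ref{NS4.4}(1) directly, giving a third confirmation.

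For the converse I would argue through moduli. Take a point $(\alpha:\beta:\gamma:\delta)$ satisfying (\ref{modular5}); on the locus where Proposition~\ref{NS4.4}(2) applies, $S_{CD}(\alpha:\beta:\gamma:\delta)$ is $M_5$-polarized, so by Proposition~\ref{K3Kiso}(2) it is isomorphic, as an $M_5$-polarized $K3$ surface, to a unique $S(\mathfrak{A}:\mathfrak{B}:\mathfrak{C})$. The first paragraph identifies this $S(\mathfrak{A}:\mathfrak{B}:\mathfrak{C})$ with $S_{CD}(\Psi_5(\mathfrak{A}:\mathfrak{B}:\mathfrak{C}))$, and the bijectivity of the $M_0$-moduli in Proposition~\ref{CDNS}(3) then forces $(\alpha:\beta:\gamma:\delta)=\Psi_5(\mathfrak{A}:\mathfrak{B}:\mathfrak{C})$. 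Thus a dense subset of the modular locus lies in $\Psi_5(\mathbb{P}(1:3:5))$; since the latter is closed (the image of a projective variety) and the modular hypersurface is irreducible (regarding (\ref{modular5}) as a monic quadratic in $\delta$, its discriminant $16\alpha(\alpha\beta-\gamma)^2$ is not a square), the two coincide. The same chain of identifications yields injectivity, hence that $\Psi_5$ is an embedding: two parameters with equal image give isomorphic $M_5$-polarized surfaces, which by Proposition~\ref{K3Kiso}(2) must agree in $\mathbb{P}(1:3:5)$.

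The step deserving the most care is the compatibility of the transformations of Lemma~\ref{LemmaEmb5} with the lattice polarizations: to invoke Proposition~\ref{CDNS}(3) I must know that $S(\mathfrak{A}:\mathfrak{B}:\mathfrak{C})$ and $S_{CD}(\Psi_5(\mathfrak{A}:\mathfrak{B}:\mathfrak{C}))$ agree not merely as abstract surfaces but as $M_0$-polarized $K3$ surfaces, so that they determine the same moduli point. Because each transformation in Lemma~\ref{LemmaEmb5} (the $2$-neighbour step and the reduction to Weierstrass form) preserves the elliptic fibration of type $II^{*}+III^{*}+5I_1$ and is an isomorphism away from fibre components, this compatibility holds, but it should be recorded explicitly rather than taken for granted. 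The passage from the generic, genuinely $M_5$-polarized points of (\ref{modular5}) to the whole hypersurface is handled by the closedness and irreducibility noted above, and the one genuinely computational ingredient, the vanishing of (\ref{modular5}) after substituting (\ref{(ABCDXY)}), is routine.
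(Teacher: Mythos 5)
Your proposal is correct and follows essentially the same route as the paper: Lemma \ref{LemmaEmb5} together with coefficient comparison against (\ref{Mother}) yields the formulas (\ref{(ABCDXY)}), and Propositions \ref{K3Kiso}, \ref{CDNS} and \ref{NS4.4} supply the moduli-theoretic identification of the image of $\Psi_5$ with the locus of the modular equation (\ref{modular5}). The paper's own proof is terser---it leaves the converse inclusion, the closedness/irreducibility argument, the injectivity of $\Psi_5$, and the compatibility of the $M_5$- and $M_0$-polarizations implicit---so the additional detail you supply (all of which is sound) merely makes explicit what the paper glosses over.
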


\begin{proof}
According to Proposition \ref{NS4.4},
if $(\alpha:\beta:\gamma:\delta)\in \mathbb{P}(2:3:5:6)$ satisfies the modular equation (\ref{modular5}),
then the N\'eron-Severi lattice ${\rm NS}(S(\alpha:\beta:\gamma:\delta))$ is generically given by the intersection matrix $ M_5$. 

On the other hand, a family of the isomorphism classes of $M_5$-marked $K3$ surfaces is given by $\mathcal{F}$.
By Lemma \ref {LemmaEmb5}, $S(\mathfrak{A}:\mathfrak{B}:\mathfrak{C}) $ is birationally equivalent to the surface given by (\ref{II^*III^*5}) with the section (\ref{section555}).
Therefore,
the Weierstrass  equation (\ref{II^*III^*5}) induces an embedding $\mathcal{F} \hookrightarrow \mathcal{F}_{CD}$ of the family of elliptic  $K3$ surfaces
with the singular fibers of type $II^* +  5 I_1 + III^*$.
Together with Proposition \ref{CDNS}, we have 
(\ref{(ABCDXY)}) by comparing the coefficients of  (\ref{Mother}) and (\ref{II^*III^*5}). 
\end{proof}

\begin{rem}
We can check that
any point of the image $\Psi_5(\mathbb{P}(1:3:5))$ 
 satisfies  the modular equation
(\ref{modular5}).
\end{rem}

\section{The family $\mathcal{F}_{EK}^\Delta$ of $K3$ surfaces}

Elkies and Kumar \cite{EK} obtained  rational models of Hilbert modular surfaces.
Especially,
in their argument,
they used  parametrizations of the Humbert surfaces $\mathcal{H}_\Delta$ for  fundamental discriminants $\Delta$ such that $1<\Delta<100$.
Their method was the  following. 
They consider a family,
that is called $\mathcal{F}_{EK}^\Delta$ in this paper,
 of elliptic $K3$ surfaces with two complex parameters.
A generic member of $\mathcal{F}_{EK}^\Delta$ has a suitable transcendental lattice and the moduli space of $\mathcal{F}_{EK}^\Delta$
is birationally equivalent to the Humbert surface $\mathcal{H}_\Delta$.
Moreover, the family $\mathcal{F}_{EK}^\Delta$ can be regarded as a subfamily of $\mathcal{F}_{CD}$. 
It follows that the two complex parameters of $\mathcal{F}_{CD}$ give a parametrization $\chi_\Delta$ of  $\mathcal{H}_\Delta$.

In this paper, 
we shall use the  parametrization $\chi_8$ ($\chi_{12},\chi_{21}$, resp.)  for the Humbert surface $\mathcal{H}_8$ ($\mathcal{H}_{12},\mathcal{H}_{21}$, resp.).
We survey their results in this section.

However, we remark that
the explicit forms of the parametrization $\chi_\Delta$ appeared  in the paper \cite{EK} only for the case $\Delta=5$ and $8$.
Then, we need to calculate the explicit forms of $\chi_{12}$ and $\chi_{21}$ from the families $\mathcal{F}_{EK}^{12}$ and $\mathcal{F}_{EK}^{21}$
(see Section 3.3 and 3.4).

\begin{rem}\label{Remchi}
The choice of the parametrization of the Humbert surface $\mathcal{H}_\Delta$ is not unique.
In fact, the parametrization $\chi_\Delta$ due to Elkies and Kumar
depends on the choice of 
an elliptic fibration of a generic member of $\mathcal{F}_{EK}^\Delta$.
To the best of the author's knowledge,
it is not easy to study modular properties of the parametrization $\chi_\Delta$.
For example, 
it seems highly non trivial problem 
to obtain an explicit expression of the parametrization of $\chi_\Delta$ via the Hilbert modular forms for $\mathbb{Q}(\sqrt{\Delta})$. 
See Remark \ref{RemPsi} also.
\end{rem}

\subsection{The case of discriminant $5$}

Before we consider the cases $\Delta=8,12,21$,
let us see the case $\Delta=5$.

In  Section 6 of \cite{EK},
the family $\mathcal{F}_{EK}^5$ of $K3 $ surfaces  is studied.
A generic member of $\mathcal{F}_{EK}^5$ is 
given by the defining equation
$$
y^2=x^3+\frac{1}{4} t^3 (-3g^2 t +4 ) x -\frac{1}{4} t^5 (4 h^2 t^2 +(4 h +g^3 ) t + (4 g+1)),
$$
where $g$ and $h$ are two complex parameters.

Using this family,
Elkies and Kumar obtained a parametrization of the Humbert  surface $\mathcal{H}_5$.
In \cite{EK}, $\mathcal{H}_5$ is realized  as a surface in
the moduli space $\mathcal{M}_2={\rm Proj }(\mathbb{C}[I_2, I_4, I_6, I_{10}]) $ by the parametrization
\begin{align*}
 I_2=6(4 g+1), \quad
  I_4=9g^2, \quad
I_6= 9( 4 h  + 9g^3 +2g^2 ),\quad
 I_{10}=4 h^2.
\end{align*}
Together with (\ref{abcd}),  
we obtain the mapping $\chi_5 : \mathbb{P}(1:2:5) \rightarrow \mathbb{P}(2:3:5:6)$ 
given by $(k:g:h) \mapsto ( \alpha'_5 (k:g:h):\beta'_5 (k:g:h):\gamma'_5 (k:g:h):\delta'_5 (k:g:h))$,
where
\begin{align*}
 \alpha'_5 (k:g:h) =g^2, \quad
 \beta_5' (k:g:h) = g^3 + 4h k, \quad
 \gamma_5' (k:g:h) = 32 h^2, \quad
 \delta_5' (k:g:h) = 16 h^2 (4 g + k^2).
\end{align*}
The mapping $\chi_5$ gives a parametrization of the Humbert surface $\mathcal{H}_5$.
Any point of the image of $\chi_5$ satisfies the modular equation (\ref{modular5}).

\begin{rem}\label{RemPsi}
The above $\chi_5$ is a parametrization different from $\Psi_5:\mathbb{P}(1:3:5) \hookrightarrow \mathbb{P}(2:3:5:6)$ in Theorem \ref{PsiThm}.
In Section 4 and 5,
we shall use only $\Psi_5$.
The parametrization  $\Psi_5$ 
has good modular properties 
and 
is more convenient than $\chi_5$ for our purpose.
For example,
the weighted projective space  $\mathbb{P}(1:3:5)={\rm Proj}(\mathbb{C}[\mathfrak{A},\mathfrak{B},\mathfrak{C}])$
is a canonical compactification of the Hilbert modular surface (see Section 3.2)
and the coordinates $\mathfrak{A},\mathfrak{B}$ and $\mathfrak{C}$ have an expression  by  Hilbert modular forms (see Proposition \ref{ThetaRepn}).
\end{rem}

\subsection{The case of discriminant  $8$}

In  Section 7 of \cite{EK},
the family $\mathcal{F}_{EK}^8$ of $K3 $ surfaces  is studied.
A generic member of $\mathcal{F}_{EK}^8$ is 
given by the defining equation
$$
y^2=x^3+ t ((2 r +1) t + r ) x^2 +2 r s  t^4 (t+1) x + r s t^7
$$
where $r$ and $s$ are two complex parameters.
This  Weierstrass equation
gives the elliptic fibration $(x,y,t) \mapsto t$ with the singular fibres of type $III^*$ and $I_5^*$.
A generic member of $\mathcal{F}_{EK}^8$ admits  another elliptic fibration  with  the singular fibres of type  $II^* + 5 I_1+III^*$ 
and we regard $\mathcal{F}_{EK}^8$ as a subfamily of $\mathcal{F}_{CD}$ (see Proposition \ref{CDNS}).
Thus, Elkies and Kumar gave the  correspondence
\begin{align*}
\begin{cases}
& I_2=-4(3s + 8r -2), \quad
  I_4=4(9r s + 4 r^2 + 4 r + 1), \\
& I_6= -4( 36 r s^2 + 94 r^2 s -35 r s + 4 s + 48 r^3 + 40 r^2 + 4 r -2 ), \quad
 I_{10}=-8 s^2 r^3.
\end{cases}
\end{align*}
Together with (\ref{abcd}),
we have the following correspondence
$
\chi_{8}:\mathbb{P}^2(\mathbb{C}) \rightarrow \mathbb{P}(2:3:5:6) 
$
given by
$(q:r:s) \mapsto (\alpha_8(r:s:q):\beta_8(r:s:q):\gamma_8(r:s:q):\delta_8(r:s:q))$
where
\begin{align}\label{parameter8}
\begin{cases}
&\alpha_8(r:s:q)=\frac{4}{9} (q^2 + 4 q r + 4 r^2 + 9 r s),\\
&\beta_8 (r:s:q) = \frac{1}{27} \Big(16 (-2 q + 8 r + 3 s) (q^2 + 4 q r + 4 r^2 + 9 r s)  \\
&\hspace{3cm}   -12 (-2 q^3 + 4 q^2 r + 40 q r^2 + 48 r^3 + 4 q^2 s - 35 q r s + 
      94 r^2 s + 36 r s^2)\Big),\\
     &\gamma_8(r:s:q)=-64 r^3 s^2,\quad
 \delta_8(r:s:q)=  \frac{64}{3} r^3 s^2 (-2 q + 8 r + 3 s).
\end{cases}
\end{align}

The correspondence $\chi_8$ gives a parametrization of the Humbert surface $\mathcal{H}_8$.
We shall use $\chi_8$ in Section 4.

\subsection{The case of discriminant $12$}

In \cite{EK} section 8,
Elkies and Kumar considered the elliptic $K3$ surfaces given by the following equation
\begin{align}\label{EK12}
y^2=x^3 + ((1 - f^2) (1 - t) + t) t x^2 + 2 e t^3 (t - 1) x + e^2 (t - 1)^2 t^5.
\end{align}
Here, $e$ and $f$ are two complex parameters.
The Weierstrass equation (\ref{EK12}) defines the elliptic fibration $(x,y,t) \mapsto t$.
For a generic point $(e,f)$, 
the equation (\ref{EK12}) gives an elliptic surface with singular fibres $I_2^*, I_3$ and $II^*$ at $t=0,1$ and $\infty $,  respectively.

\begin{prop}
The $K3$ surface given by (\ref{EK12}) is birationally equivalent to the elliptic $K3$ surface given by the Weierstrass equation
\begin{align}\label{K3.12}
y_1^2=& x_1^3 +\Big(\frac{1}{3} (-9 e + 15 e f - f^4) u^4+   (-e^3 (1 + f)) u^5\Big)x \notag\\ 
&+\Big( u^5 + \frac{1}{27} (-54 e^2 - 81 e f^2 + 63 e f^3 + 2 f^6) u^6 + \frac{1}{3} e^3 (3 + 3 e + 3 f - 2 f^2 - 2 f^3)u^7 \Big),
\end{align}
with singular fibres of type $II^* + 5 I_1 + III^*$.
\end{prop}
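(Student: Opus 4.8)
The plan is to reproduce, for discriminant $12$, the 2-neighbor step used in the proof of Lemma \ref{LemmaEmb5}. The surface (\ref{EK12}) carries the fibration $(x,y,t)\mapsto t$ with reducible fibres $II^*$ (an $E_8$-configuration) at $t=\infty$, $I_2^*$ (a $D_6$-configuration) at $t=0$ and $I_3$ (an $A_2$-configuration) at $t=1$; together with the three $I_1$ fibres forced by the Euler-number count $10+8+3+3=24$, the Shioda--Tate formula gives Picard number $18$ and Mordell--Weil rank $0$ for a generic member. Because $\mathcal{F}_{EK}^{12}$ is a subfamily of $\mathcal{F}_{CD}$, the same $K3$ surface also carries an elliptic fibration of type $II^*+5I_1+III^*$ (now of Mordell--Weil rank $1$), and the task is to exhibit this second fibration and compute its Weierstrass model.

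First I would locate, inside the N\'eron--Severi lattice, an effective isotropic class $F'$ that is a 2-neighbor of the original fibre class $F$ (so $F\cdot F'=2$) and whose associated genus-one pencil has reducible fibres of type $II^*$ and $III^*$. Since no fibre of two distinct pencils can coincide, both target fibres must be reassembled from the available negative curves, namely the components of the $E_8$, $D_6$ and $A_2$ configurations together with the sections; concretely one builds an extended $\widetilde{E}_8$ and an extended $\widetilde{E}_7$ diagram out of these curves. The pencil $|F'|$ then determines the new elliptic parameter $u$ as an explicit rational function of $x,y,t$, just as the substitution $u_0=x/t^4$ did in Lemma \ref{LemmaEmb5}.

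Rewriting (\ref{EK12}) in coordinates adapted to $|F'|$ produces an intermediate model of the same surface; applying the quartic-to-Weierstrass procedure of \cite{AKMMMP} as in Lemma \ref{LemmaEmb5}, followed by a final rescaling of $u$, $x_1$ and $y_1$, should bring it to the normalized $\mathcal{F}_{CD}$ shape (\ref{Mother}) and hence to (\ref{K3.12}). Once the equation is in this form the singular fibres are read off: its coefficients have exactly the degrees in $u$ prescribed by (\ref{Mother}), so Proposition \ref{CDNS}(1) already forces fibres $II^*$ at $u=0$, $III^*$ at $u=\infty$ and five $I_1$ elsewhere; alternatively one verifies this directly from the discriminant $\Delta(u)$ by Tate's algorithm, with $10+9+5=24$.

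I expect the crux to be the first step: pinning down the correct 2-neighbor class $F'$, i.e. writing the new elliptic parameter $u$ explicitly, and then carrying out the lengthy but routine algebra that brings the transformed equation precisely into (\ref{K3.12}). The identification of the fibre configuration afterwards is comparatively straightforward.
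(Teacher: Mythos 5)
Your plan is the same route the paper takes: a 2-neighbor step from the $I_2^*+I_3+II^*+3I_1$ fibration to a $II^*+5I_1+III^*$ fibration, followed by the quartic-to-Weierstrass procedure of Section 3.1 of \cite{AKMMMP} and a final rescaling into the normalized shape (\ref{Mother}). Your preliminary analysis is also correct: the fibre types of (\ref{EK12}), the Euler count $10+8+3+3=24$, Picard number $18$ with Mordell--Weil rank $0$, and rank $1$ for the target fibration all agree with the situation in the paper, and the fibre identification at the end (done honestly via the discriminant/Tate rather than by quoting Proposition \ref{CDNS}(1) in the wrong direction, as you note) is indeed the easy part.

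The gap is that you stop exactly where the proof has its content. Since the statement to be proved is an explicit equation, the proof \emph{is} the explicit transformation, and you leave its key ingredient --- the new elliptic parameter --- as something ``to be pinned down.'' The paper supplies it concretely: one substitutes
\begin{align*}
x = (t-1)\,t^3\Big(\frac{e}{(f-1)\,t} + u_1\Big), \qquad y = t^3(t-1)\,y_0,
\end{align*}
i.e.\ the new parameter is $u_1 = \frac{x}{(t-1)t^3} - \frac{e}{(f-1)t}$ (a rational function of $x$ and $t$ alone, so that the equation becomes $y_0^2 = $ a quartic in $t$ over the $u_1$-line), and after the \cite{AKMMMP} reduction finishes with the rescaling $x_0 = \frac{e^6 x_1}{4(f-1)^2}$, $u_1 = -\frac{e^3 u}{f-1}$, $y_0 = \frac{e^9 y_1}{4(f-1)^3}$ to land on (\ref{K3.12}). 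Without exhibiting this parameter (or an equivalent one, e.g.\ by actually assembling the $\widetilde{E}_7$ diagram you describe and computing the corresponding pencil) and carrying the algebra through, the specific coefficients in (\ref{K3.12}) cannot be verified; the correction term $\frac{e}{(f-1)t}$ in particular is not guessable from general principles. So as written your proposal is a correct strategy outline --- matching the paper's --- but not yet a proof.
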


\begin{proof}
Putting
\begin{align}\label{12:2neighbor}
x = ( t- 1) t^3 \Big(\frac{e}{( f-1) t} + u_1\Big), \quad y =  t^3 (t-1) y_0 
\end{align}
to (\ref{EK12}), we obtain 
an equation in the form
\begin{align}\label{4jishiki}
y_0^2 = \text{a polynomial in }  t \text{ of degree} 4.
 \end{align}
 Applying  the canonical method of Section 3.1
 in \cite{AKMMMP} to (\ref{4jishiki}),
we have an equation in the form
\begin{align}\label{preK31}
y_0^2=4 x_0^3 -I_0 (u_1) x_0 -J_0(u_1).
\end{align}
By the birational transformation
$
x_0 = \frac{e^6 x_1}{4 (-1 + f)^2},  u_1 = -\frac{e^3 u}{-1 + f},   y_0 = \frac{e^9 y_1}{4 (-1 + f)^3}
$
to (\ref{preK31}),
we have (\ref{K3.12}).
\end{proof}

\begin{rem}
The birational transformation (\ref{12:2neighbor}) gives an example of $2$-neighbor step
to obtain the singular fibre of type $III^*$. See Figure 3.
\end{rem}

\begin{figure}[h]
\center
\includegraphics[scale=0.6]{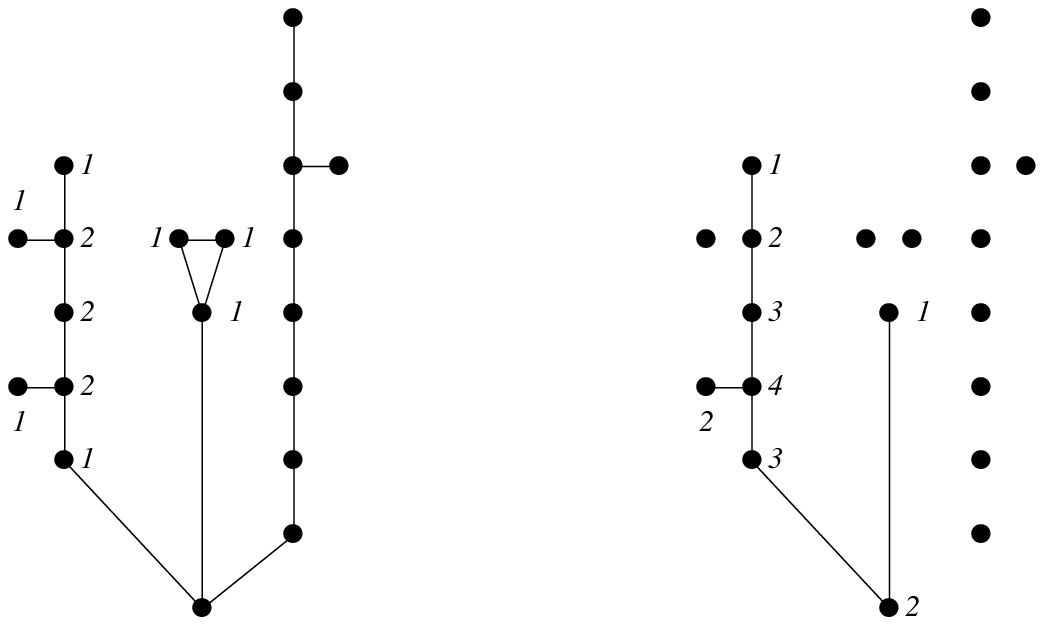}
\caption{2-neiborgh step in (\ref{12:2neighbor})}
\end{figure}

From (\ref{Mother}) and (\ref{K3.12}),
we obtain the following proposition.

\begin{prop}
The mapping $\chi_{12} : \mathbb{P}(2:1:1) \rightarrow \mathbb{P}(2:3:5:6)= \mathbb{P}(4:6:10:12)$ given by 
$(e:f:g) \mapsto (\alpha_{12}(e:f:g):\beta_{12}(e:f:g):\gamma_{12}(e:f:g):\delta_{12}(e:f:g))$ where
\begin{align}\label{parameter12}
\begin{cases}
&\displaystyle  \alpha_{12}(e:f:g)=\frac{1}{9} (f^4 - 15 e f g + 9 e g^2),
\displaystyle  \beta_{12}(e:f:g)=\frac{1}{54} f (-2 f^6 - 63 e f^3 g + 54 e^2 g^2 + 81 e f^2 g^2),\\
&\vspace{2mm} \gamma_{12}(e:f:g)= e^3 g^3 (f + g) ,
 \delta_{12}(e:f:g)=\frac{1}{3} e^3 g^3 (-2 f^3 + 3 e g - 2 f^2 g + 3 f g^2 + 3 g^3),
\end{cases}
\end{align}
gives  a parametrization of the Humbert surface $\mathcal{H}_{12}$.
\end{prop}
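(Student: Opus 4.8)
The plan is to realise $\mathcal{F}_{EK}^{12}$ as a subfamily of $\mathcal{F}_{CD}$ and to read off the induced map of parameter spaces, exactly as was done for discriminants $5$ and $8$. By the preceding proposition, a generic member of $\mathcal{F}_{EK}^{12}$, given by (\ref{EK12}), is birationally equivalent to the elliptic $K3$ surface (\ref{K3.12}), whose fibration $(x_1,y_1,u)\mapsto u$ has singular fibres of type $II^*$ at $u=0$, $III^*$ at $u=\infty$ and five fibres of type $I_1$. By Proposition \ref{CDNS} (1) any elliptic $K3$ surface with these singular fibres is the Clingher--Doran surface $S_{CD}(\alpha:\beta:\gamma:\delta)$ of (\ref{Mother}); hence (\ref{K3.12}) determines a point $(\alpha:\beta:\gamma:\delta)\in\mathbb{P}(2:3:5:6)$ depending only on $e$ and $f$, and this assignment is the sought map.

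To make the values explicit I would compare the Weierstrass coefficients of (\ref{K3.12}) with those of (\ref{Mother}) term by term in the fibre coordinate (with $u$ playing the role of $t$): the coefficient of $u^4$ in the linear part yields $-3\alpha$, that of $u^5$ yields $-\gamma$, the coefficient of $u^6$ in the constant part yields $-2\beta$, and that of $u^7$ yields $\delta$. Solving these four relations expresses $\alpha,\beta,\gamma,\delta$ as explicit polynomials in $e,f$. To upgrade this to a morphism of weighted projective spaces I would introduce the auxiliary weight-one variable $g$ and homogenise: with weights $(2,1,1)$ on $(e,f,g)$ the four expressions become weighted-homogeneous of degrees $4,6,10,12$, matching the target weights $(2:3:5:6)=(4:6:10:12)$, so that $\chi_{12}:\mathbb{P}(2:1:1)\to\mathbb{P}(2:3:5:6)$ is well defined and recovers (\ref{parameter12}); specialising $g=1$ returns the affine values just computed.

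It then remains to identify the image with the Humbert surface $\mathcal{H}_{12}$. For this I would invoke the defining property of the Elkies--Kumar family recalled at the beginning of this section: a generic member of $\mathcal{F}_{EK}^{12}$ carries the transcendental lattice for which the moduli space of $\mathcal{F}_{EK}^{12}$ is birationally equivalent to $\mathcal{H}_{12}$, while the family embeds into $\mathcal{F}_{CD}$. Under the identification $PO(A_0,\mathbb{Z})\backslash\mathcal{D}_0\simeq\mathcal{A}_2$, the two parameters $(e,f)$ therefore sweep out exactly $\mathcal{H}_{12}$ inside $\mathbb{P}(2:3:5:6)$, which is what the statement asserts.

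I expect the genuine difficulty to lie in this last identification rather than in the algebra: one must control the generic N\'eron--Severi and transcendental lattices along $\mathcal{F}_{EK}^{12}$ to be sure that its image is the full Humbert surface $\mathcal{H}_{12}$, of discriminant exactly $12$, rather than a proper subvariety or a surface attached to another discriminant. The coefficient matching and the homogenisation are routine once the birational reduction of (\ref{EK12}) to the $\mathcal{F}_{CD}$-form (\ref{K3.12}) --- the $2$-neighbour step (\ref{12:2neighbor}) of the preceding proposition --- is available.
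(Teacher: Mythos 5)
Your proposal is correct and follows essentially the same route as the paper: the paper obtains (\ref{parameter12}) precisely by comparing the Weierstrass coefficients of (\ref{K3.12}) with those of the Clingher--Doran form (\ref{Mother}) (after the $2$-neighbor step of the preceding proposition) and by invoking the Elkies--Kumar identification of the moduli space of $\mathcal{F}_{EK}^{12}$ with the Humbert surface $\mathcal{H}_{12}$ inside $\mathcal{A}_2$. One remark: carrying out your coefficient matching literally yields $\beta_{12}=\frac{1}{54}\bigl(-2f^6-63ef^3g+54e^2g^2+81ef^2g^2\bigr)$, so the leading factor $f$ in the printed formula --- which would make $\beta_{12}$ weighted-homogeneous of degree $7$ instead of the degree $6$ required for $\chi_{12}$ to be well defined on $\mathbb{P}(2:1:1)$ --- is a typo in the paper, and your computation in fact corrects (\ref{parameter12}) rather than reproducing it verbatim.
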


\subsection{The case of discriminant $21$}

In \cite{EK} Section 11, they studied the elliptic $K3$ surface given by the Weierstrass equation
\begin{align}\label{EK21}
y^2=x^3 + (a_0 + a_1 t + a_2 t^2) x^2 + 2 t^2 (t - 1) (b_0 + b_1 t) x + 
 t^4 (t - 1)^2 (c_0 + c_1 t),
\end{align}
where
\begin{align}
\begin{cases}
& a_0=1, \quad a_1= -r^2 + 2 r s - 1, \quad a_2=(r - s)^2,\\
& b_0 = (r^2 - 1) (s - r)^2, \quad b_1 =(r^2 - 1) (s - r)^2 (r s - 1), \\
& c_0= (r^2 - 1)^2 (s - r)^4, \quad c_1= (r^2 - 1)^3 (s - r)^4.
\end{cases}
\end{align}
Here, $r $ and $s$ are two complex parameters.
For generic $(r,s)$, an elliptic surface given by (\ref{EK21}) has singular fibers $I_7,I_3$ and $II^*$ at $t=0,1$ and $\infty$ respectively.

\begin{prop}
The $K3$ surface given by (\ref{EK21}) is birationally equivalent to the elliptic $K3$ surface given by the Weierstrass equation
\begin{align}\label{EK21s}
y_1^2=& x_1^3+\Big(\frac{1}{3} (-9 - 30 r - 7 r^2 + 30 r^3 + 15 r^4 + 30 s + 26 r s - 30 r^2 s - 
  24 r^3 s - s^2) u^4 \notag\\
  &\hspace{8cm} -(-1 + r)^6 (1 + r)^4 (r - s) u^5 \Big) x_1 \notag\\
& \quad +u^5- \frac{2}{27}(r - s) (189 + 252 r - 280 r^2 - 441 r^3 + 63 r^4 + 189 r^5 \notag \\
& \quad \quad \quad \quad  \quad \quad \quad   \quad \quad \quad \quad \quad 
  +  27 r^6 - 63 s - 70 r s + 63 r^2 s + 72 r^3 s - s^2)u^6 \notag\\
   &\quad + \frac{1}{3} (-1 + r)^6 (1 + r)^4 (3 - 5 r^2 + 3 r^4 + 6 s + 10 r s - 
   6 r^3 s - 2 s^2 + 3 r^2 s^2)u^7.
\end{align}
with singular fibres of type $II^* + 5 I_1  + III^*$.
\end{prop}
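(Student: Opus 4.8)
The plan is to follow the same \emph{2-neighbor step} strategy used for the discriminant $12$ case. The surface (\ref{EK21}) carries the elliptic fibration $(x,y,t)\mapsto t$ with reducible fibres $I_7$ at $t=0$, $I_3$ at $t=1$ and $II^*$ at $t=\infty$; counting Euler numbers forces four further $I_1$ fibres, and the Shioda--Tate formula then shows that a generic member has N\'eron--Severi rank $18$ and Mordell--Weil rank zero, reflecting that $\mathcal{F}_{EK}^{21}$ lies over the Humbert surface $\mathcal{H}_{21}$ inside $\mathcal{F}_{CD}$. To bring this surface into the $\mathcal{F}_{CD}$-shape (\ref{Mother}) I must produce a second genus-one fibration whose singular fibres are $II^* + 5 I_1 + III^*$. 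The $II^*$ (hence the $E_8$) should be inherited, while the $III^*$ (hence the $E_7$) has to be assembled from the components of the $I_7$ and $I_3$ fibres together with a section, the extra real-multiplication class reappearing as a rank-one Mordell--Weil section.

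Concretely, I would first search for a 2-neighbor substitution of the form
\[
x = (t-1)^{a} t^{b}\Big( \frac{C}{L(t)} + u_1 \Big), \qquad y = (t-1)^{c} t^{d}\, y_0,
\]
mirroring (\ref{12:2neighbor}); here $L(t)$ is linear in $t$ (possibly a monomial, as in the discriminant $12$ case), $C$ is a constant built from the data $(r^2-1)$ and $(s-r)$ entering (\ref{EK21}), and the exponents $a,b,c,d$ are fixed by the vanishing orders of the coefficients of (\ref{EK21}) along the $I_7$ fibre at $t=0$ and the $I_3$ fibre at $t=1$ (compare Figure 3). The term $C/L(t)$ records the section of the original fibration along which the new pencil is swept, and is chosen so that the new fibre coordinate $u_1$ acquires exactly the pole order needed to force a $III^*$ fibre at $u_1=\infty$. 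Substituting into (\ref{EK21}) and clearing denominators should produce, after the expected cancellations, an equation $y_0^2 = P(t)$ with $P$ a quartic in $t$ whose coefficients are polynomials in $u_1, r, s$.

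Next I would apply the canonical reduction of Section 3.1 of \cite{AKMMMP} to this quartic, obtaining a Weierstrass model $y_0^2 = 4 x_0^3 - I_0(u_1) x_0 - J_0(u_1)$ with $I_0, J_0$ polynomial in $u_1$. A final scaling birational transformation $x_0 = \lambda x_1,\ u_1 = \mu u,\ y_0 = \nu y_1$, with $\lambda, \mu, \nu$ chosen as suitable monomials in $(r^2-1)$ and $(r-s)$ so as to normalize the coefficient of $u^5$ in the constant term to $1$, should then match the stated equation (\ref{EK21s}). To confirm the fibre types I would compute the discriminant of (\ref{EK21s}) as a polynomial in $u$ and check that it vanishes to the orders corresponding to $II^*$ at $u=0$ and $III^*$ at $u=\infty$, the remaining five simple roots giving the $I_1$ fibres; the vanishing orders of $I_0$ and $J_0$ at $u=0,\infty$, read through Tate's algorithm, then pin down the two additive fibres precisely.

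I expect the genuine difficulty to be twofold. The conceptual obstacle is pinning down the correct 2-neighbor substitution --- the exponents $a,b,c,d$, the constant $C$ and the linear form $L(t)$ --- since this amounts to identifying exactly which components of the $I_7$ and $I_3$ fibres the chosen section meets; this is where the geometry, rather than the algebra, does the work. Once the substitution is fixed the remaining obstacle is purely computational: the coefficients in (\ref{EK21s}) are sextic in $r$, so the intermediate polynomials $I_0(u_1), J_0(u_1)$ are unwieldy, and the cancellations and the final fibre-type verification are realistically carried out with a computer algebra system rather than by hand.
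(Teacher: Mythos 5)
Your overall architecture --- a neighbor-step substitution, reduction to a Weierstrass model via \cite{AKMMMP}, and a final monomial rescaling --- is the same as the paper's, but the specific ansatz you fix at the outset is wrong, and the error is structural rather than computational. You model your substitution on the discriminant-$12$ case (\ref{12:2neighbor}), i.e.\ $x=(t-1)^{a}t^{b}\bigl(C/L(t)+u_1\bigr)$, which makes $u_1$ an affine-linear function of $x$ on each fibre of the old fibration; hence $u_1$ has degree $2$ on a general fibre, so the new fibre class $F'$ satisfies $F'\cdot F=2$ (a $2$-neighbor step). No such class can produce the desired $III^*$ fibre. Indeed, for (\ref{EK21}) the fibration $I_7+I_3+II^*+4I_1$ has Mordell--Weil rank $0$ (as you note), and the height formula excludes torsion sections: a torsion section $P\neq O$ would need $\sum_v\mathrm{contr}_v(P)=4+2(P\cdot O)\geq 4$, while the $II^*$ fibre contributes $0$ and the $I_7,I_3$ fibres contribute at most $12/7+2/3<4$. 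So the zero section $O$ is the only section, and the fibre of your new fibration at $u_1=\infty$ (the polar divisor of $u_1$) is supported on old fibre components and $O$. By Zariski's lemma it cannot be supported on old fibre components alone, so $O$ occurs in it with multiplicity $m\geq1$ and $F'\cdot F=m$. Now inspect the dual graph: a $7$-cycle $C_0,\dots,C_6$, a $3$-cycle $D_0,D_1,D_2$, with $O$ meeting $C_0$, $D_0$ and the end component of the $II^*$. In $\widetilde{E}_7$ the multiplicity-$2$ vertices are the short tail $b$ (valence $1$, attached to the valence-$3$ vertex) and $a_2,a_6$ (valence $2$, with neighbours of multiplicities $1$ and $3$). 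If $O=b$, the valence-$3$ vertex must be $C_0$ or $D_0$, and both fail: the two arms through $C_0$ close up the $7$-cycle, and the $3$-cycle is too short. If $O=a_2$ or $a_6$, then the valence-$3$ vertex $a_4$ would have to be a neighbour of $C_0$, $D_0$ or the end component of $II^*$ other than $O$, and all of these have valence $2$. So $m=2$ is impossible; the minimal admissible configuration is $D_1+2D_0+3O+4C_0+3C_1+2C_2+C_3+2C_6$ (this is Figure 4), with $m=3$.

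Consequently a $3$-neighbor step is forced, and this changes the algebra downstream: eliminating along a degree-$3$ function presents the general fibre as a plane cubic, not as a double cover of a quartic. This is exactly the paper's route: the transformation (\ref{3neighbor}) brings (\ref{EK21}) to the cubic equation (\ref{3DegDiv}), which is converted to the Weierstrass form (\ref{PreK32}) by the method of Section 3.2 of \cite{AKMMMP} (Jacobian of a plane cubic), not Section 3.1 (quartic double cover); a final rescaling by monomials in $(r-1)$, $(r+1)$, $(r-s)$ then yields (\ref{EK21s}). Your concluding check of the fibre types via the discriminant of (\ref{EK21s}) is fine as far as it goes, but the core of your plan --- the $2$-neighbor substitution followed by the quartic reduction --- would fail for every choice of the exponents $a,b,c,d$, the constant $C$ and the linear form $L(t)$, so the proof cannot be completed along the lines you propose.
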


\begin{proof}
By the birational transformation given by
\begin{align}\label{3neighbor}
\begin{cases}
 x=& (-1 + r^2) (r - s)^2 (-1 + t) t^2 x_0,\\
 y=& \frac{1}{2} (-1 + r^2) (r - s)^2 (-1 + t) t^2\\
 & \times 
 (2 t^3 u_0 + 2 (1 + x_0) 
 -  t (1 + r^2 (-1 + x_0) + x_0 - 2 r s x_0) \\
&\quad \quad -  t^2 (1 + 2 u_0 - r^2 (-1 + x_0) + x_0 - 2 s x_0 + 2 r (-1 + s x_0))),
\end{cases}
\end{align}
the equation (\ref{EK21}) is changed to an equation in  the form
\begin{align}\label{3DegDiv}
a_0+a_1 x_0 + a_2 t + a_3 x_0^2 + a_4 x_0 t + a_5 t^2 + a_6 x^3 + a_7 x^2 t + a_8 x t^2 + a_9 t^3 =0.
\end{align}
Using the method of  Section 3.2 in \cite{AKMMMP},  the equation  (\ref{3DegDiv}) is transformed to  the Weierstrass equation  in the form
\begin{align}\label{PreK32}
y_0^2= 4x_0^3 - 108 S(u_0) x_0 - 27 T(u_0).
\end{align}
Putting 
$
 x_0= \frac{1}{4} (-1 + r)^{10} (1 + r)^6 (r - s)^3 x_1,
y_0 =  \frac{1}{4} (-1 + r)^{15} (1 + r)^9 \sqrt{(r - s)^9} y_1,
 u_0=\frac{1}{2} (-1 + r)^5 (1 + r)^3 (r - s) u_1
$
to (\ref{PreK32}), we have (\ref{EK21s}).
\end{proof}

\begin{rem}
The birational transformation (\ref{3neighbor}) gives an example of $3$-neighbor step
to obtain the singular fibre of type $III^*$. See Figure 4.
\end{rem}

\begin{figure}[h]
\center
\includegraphics[scale=0.6]{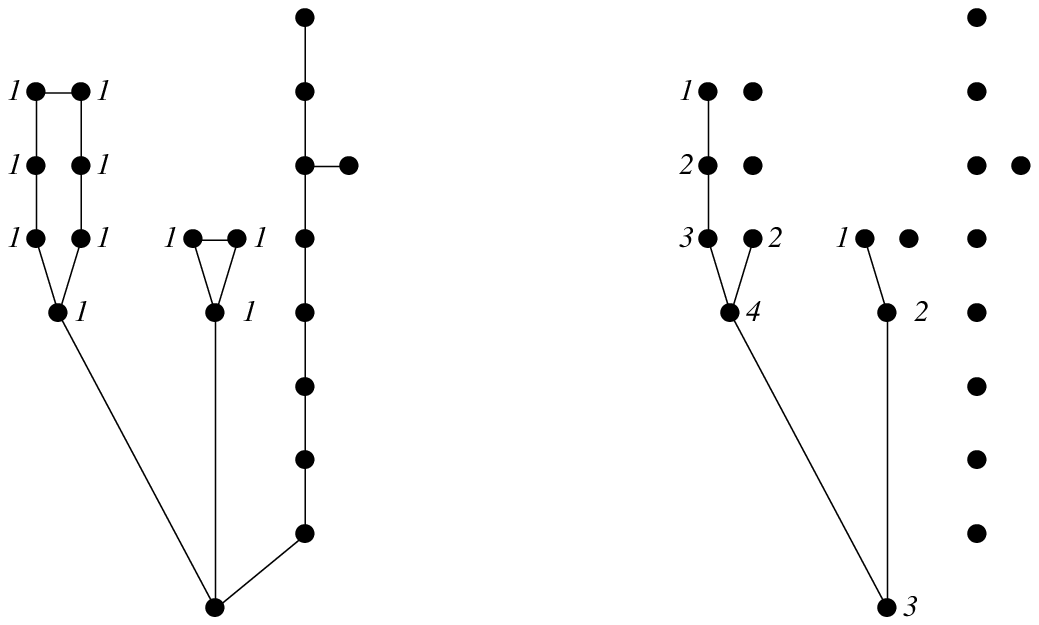}
\caption{3-neiborgh step for (\ref{3neighbor})}
\end{figure}

Setting
$
s = 1 + r_1 + \frac{r_1 (2 + r_1)}{s_1},  r = 1 + r_1
$
and comparing the coefficients of (\ref{Mother}) and (\ref{EK21s}), we can prove the following proposition.

\begin{prop}
The mapping $\chi_{21} : \mathbb{P}(1:1:2) \rightarrow \mathbb{P}(2:3:5:6) = \mathbb{P}(4:6:10:12) $ given by 
$(q_1:r_1:s_1) \mapsto (\alpha_{21}(q_1:r_1:s_1):\beta_{21}(q_1:r_1:s_1):\gamma_{21}(q_1:r_1:s_1):\delta_{21}(q_1:r_1:s_1))$ where
\begin{align}\label{ABCD21}
\begin{cases}
&\vspace{2mm}\displaystyle  \alpha_{21} (q_1:r_1:s_1)=\frac{1}{9} (q_1^4 + 54 q_1^2 s_1 + 24 q_1 r_1 s_1 + 9 s_1^2),\\
&\vspace{2mm}\displaystyle  \beta_{21} (q_1:r_1:s_1)=\frac{1}{27} (q_1^6 - 135 q_1^4 s_1 - 72 q_1^3 r_1 s_1 - 405 q_1^2 s_1^2 - 243 q_1 r_1 s_1^2 - 
  27 r_1^2 s_1^2),\\
&\vspace{2mm} \gamma_{21} (q_1:r_1:s_1)= -r_1^2 s_1^4 ,
\quad \delta_{21} (q_1:r_1:s_1)=\frac{1}{3} s_1^4 (q_1^2 r_1^2 + 6 q_1 r_1^3 + 3 r_1^4 + 6 q_1 r_1 s_1 + 3 s_1^2).
\end{cases}
\end{align}
gives a parametrization of the Humbert surface $\mathcal{H}_{21}$.
\end{prop}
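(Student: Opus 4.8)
The plan is to recognize the surface (\ref{EK21s}) as a member of the family $\mathcal{F}_{CD}$ and then read off its $\mathbb{P}(2:3:5:6)$-coordinates by a direct comparison of Weierstrass coefficients, exactly as was done for $\chi_{12}$ and in the proof of Theorem \ref{PsiThm}. The preceding proposition already establishes that (\ref{EK21}) is birationally equivalent to (\ref{EK21s}) and that the latter carries singular fibres of type $II^* + 5 I_1 + III^*$. Consequently, by Proposition \ref{CDNS}(1), the surface (\ref{EK21s}) must coincide with $S_{CD}(\alpha:\beta:\gamma:\delta)$ for a uniquely determined $(\alpha:\beta:\gamma:\delta)$, and the whole task reduces to making this point explicit.

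First I would match the two Weierstrass forms monomial by monomial. Denoting the coefficient of $x_1$ in (\ref{EK21s}) by $\frac{1}{3}P(r,s)\,u^4 - Q(r,s)\,u^5$ and its constant term by $u^5 - \frac{2}{27}R(r,s)\,u^6 + \frac{1}{3}S(r,s)\,u^7$, comparison with the shape $-3\alpha t^4 - \gamma t^5$ and $t^5 - 2\beta t^6 + \delta t^7$ of (\ref{Mother}) gives at once $\alpha = -\frac{1}{9}P$, $\gamma = Q$, $\beta = \frac{1}{27}R$ and $\delta = \frac{1}{3}S$ as explicit polynomials in $r,s$; this is mechanical, since (\ref{EK21s}) was normalized precisely so that it displays the monomials of (\ref{Mother}). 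I would then carry out the indicated substitution $r = 1 + r_1$, $s = 1 + r_1 + \frac{r_1(2+r_1)}{s_1}$, turning $\alpha,\beta,\gamma,\delta$ into rational functions of $r_1,s_1$. Since a point of a weighted projective space may be rescaled, I would multiply by the weighted factor $\lambda = \frac{s_1}{r_1(2+r_1)}$, replacing $(\alpha,\beta,\gamma,\delta)$ by $(\lambda^2\alpha, \lambda^3\beta, \lambda^5\gamma, \lambda^6\delta)$; one checks that this clears all denominators and common factors (for instance $\gamma = (r-1)^6(r+1)^4(r-s)$ becomes $-r_1^2 s_1^4$), and homogenizing with the weight-one variable $q_1$ produces (\ref{ABCD21}). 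A verification of the weighted degrees $4,6,10,12$ confirms that $\alpha_{21},\beta_{21},\gamma_{21},\delta_{21}$ are weighted-homogeneous, so $\chi_{21}:\mathbb{P}(1:1:2)\to\mathbb{P}(4:6:10:12)=\mathbb{P}(2:3:5:6)$ is well-defined.

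It then remains to argue that the image of $\chi_{21}$ is exactly the Humbert surface $\mathcal{H}_{21}$, not merely a proper subvariety or a covering of it. Here I would appeal to Elkies and Kumar: a generic member of the two-parameter family $\mathcal{F}_{EK}^{21}$ has transcendental lattice suited to discriminant $21$, so the moduli of $\mathcal{F}_{EK}^{21}$ is birationally equivalent to $\mathcal{H}_{21}$, and this equivalence factors through the embedding $\mathcal{F}_{EK}^{21}\hookrightarrow\mathcal{F}_{CD}$ supplied by (\ref{EK21s}). Under the identification $\mathcal{A}_2\simeq PO(A_0,\mathbb{Z})\backslash\mathcal{D}_0$ recalled earlier, the image of this family is $\mathcal{H}_{21}$, whence $\chi_{21}$ parametrizes $\mathcal{H}_{21}$. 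I expect the genuine obstacle to lie in this dominance step rather than in the coefficient bookkeeping: one must ensure that the particular affine slice singled out by the substitution $r=1+r_1$, $s=1+r_1+\frac{r_1(2+r_1)}{s_1}$ is generically dominant onto $\mathcal{H}_{21}$, which relies on the genericity of the discriminant-$21$ transcendental lattice computed in \cite{EK} rather than on any property of the substitution itself.
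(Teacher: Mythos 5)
Your proposal is correct and follows essentially the same route as the paper, whose entire proof is the one-line remark preceding the proposition: substitute $r=1+r_1$, $s=1+r_1+\frac{r_1(2+r_1)}{s_1}$ and compare the Weierstrass coefficients of (\ref{EK21s}) with (\ref{Mother}), the identification of the image with $\mathcal{H}_{21}$ resting on the Elkies--Kumar framework recalled at the start of Section 3. Your additional bookkeeping (the weighted rescaling by $\lambda=\frac{s_1}{r_1(2+r_1)}$, the $q_1$-homogenization, and the observation that the substitution is a birational change of parameters so dominance comes from \cite{EK}) simply makes explicit what the paper leaves implicit.
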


\section{The Shimura curves of discriminant $6$  and $10 $ in $\mathbb{P}(1:3:5)$}

\subsection{The Shimura curves $\Psi^*_5(\mathcal{S}_6)$ and $\Psi^*_5(\mathcal{S}_{10})$}

Let us recall that
the Humbert surface $\mathcal{H}_\Delta$ of discriminant $\Delta$ is a surface in the Igusa 3-fold
$\mathcal{A}_2$.
Moreover, $\mathcal{A}_2$ is a Zariski open set of the weighted projective space $\mathbb{P}(2:3:5:6)={\rm Proj}(\mathbb{C}[\alpha:\beta:\gamma:\delta ]).$
Hence, we can regard the Humbert surface $\mathcal{H}_\Delta$ as a divisor in $\mathbb{P}(2:3:5:6)$.
Especially, Theorem \ref{PsiThm} says that 
 the Humbert surface $\mathcal{H}_5 \subset \mathcal{A}_2$  is parametrized by $(\mathfrak{A}:\mathfrak{B}:\mathfrak{C}) \in \mathbb{P}(1:3:5) \simeq \overline{  \langle PSL(2,\mathcal{O}_5),\tau \rangle \backslash  (\mathbb{H}\times \mathbb{H}) }$
 via the embedding $\Psi_5:\mathbb{P}(1:3:5) \hookrightarrow \mathbb{P}(2:3:5:6)$.
 On the other hand,
 the Humbert surface $\mathcal{H}_8 \subset \mathcal{A}_2$ is parametrized by $\chi_8$ (see Section 3.2).
The intersection $\mathcal{H}_5 \cap \mathcal{H}_8$ of two Humbert surfaces is an analytic subset in $\mathcal{A}_2$.
Let us consider the pull-back $\Psi^{*}_5 (\mathcal{H}_5 \cap \mathcal{H}_8)$, that is a curve in $\mathbb{P}(1:3:5)$.

\begin{thm}
\label{ShimuraModular}
The divisor $\Psi_5^*(\mathcal{H}_5 \cap \mathcal{H}_8)$ in the weighted projective space $\mathbb{P}(1:3:5)={\rm Proj}(\mathbb{C}[\mathfrak{A}:\mathfrak{B}:\mathfrak{C}])$ is given by the defining equation
\begin{align}\label{MonoGene}
& (\mathfrak{A}^5 - 5 \mathfrak{A}^2 \mathfrak{B} + \mathfrak{C}) \notag \\
 \times & (3125 \mathfrak{A}^5 - 3375 \mathfrak{A}^2 \mathfrak{B} + 
   243 \mathfrak{C}) \notag \\
\times  &  (38400000000000 \mathfrak{A}^9 \mathfrak{B}^2 + 120528000000000 \mathfrak{A}^6 \mathfrak{B}^3\notag\\
& - 
   4100625000000 \mathfrak{A}^3 \mathfrak{B}^4 - 184528125000000 \mathfrak{B}^5 + 
   2560000000000 \mathfrak{A}^10 \mathfrak{C} \notag\\
   & + 6998400000000 \mathfrak{A}^7 \mathfrak{B} \mathfrak{C} + 
   34698942000000 \mathfrak{A}^4 \mathfrak{B}^2 \mathfrak{C} + 42539883750000 \mathfrak{A} \mathfrak{B}^3 \mathfrak{C} \notag\\
   & + 
   2576431800000 \mathfrak{A}^5 \mathfrak{C}^2 + 2714325066000 \mathfrak{A}^2 \mathfrak{B} \mathfrak{C}^2 + 
   146211169851 \mathfrak{C}^3)=0.
   \end{align}
\end{thm}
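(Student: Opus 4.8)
The plan is first to collapse the intersection to a single pullback. By Theorem \ref{PsiThm} the map $\Psi_5$ is an embedding whose image is precisely the modular locus $\mathcal{H}_5$, so $\Psi_5^{-1}(\mathcal{H}_5)$ is all of $\mathbb{P}(1:3:5)$ and therefore
$$\Psi_5^*(\mathcal{H}_5\cap\mathcal{H}_8)=\Psi_5^*(\mathcal{H}_8).$$
Thus it suffices to pull back the single Humbert surface $\mathcal{H}_8$ along $\Psi_5$ and read off its defining equation in $\mathbb{C}[\mathfrak{A},\mathfrak{B},\mathfrak{C}]$.

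The most direct route is a substitution. By Remark \ref{RemarkModular8} the surface $\mathcal{H}_8$ is cut out in $\mathbb{P}(2:3:5:6)$ by the modular equation (\ref{Mod8}). Inserting the explicit polynomial expressions (\ref{(ABCDXY)}) for $(\alpha,\beta,\gamma,\delta)$ in terms of $(\mathfrak{A},\mathfrak{B},\mathfrak{C})$ into (\ref{Mod8}) is a purely mechanical operation and yields a weighted-homogeneous polynomial $P(\mathfrak{A},\mathfrak{B},\mathfrak{C})$ of weight $30$. Because the degree and coefficients of (\ref{Mod8}) are large, I would carry this out in a computer algebra system and then factor $P$.

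The crux is the factorization together with the removal of spurious factors. The product (\ref{MonoGene}) has weight $25$, so $P$ equals a nonzero constant times (\ref{MonoGene}) times an extra factor of weight $5$. This extra factor does not belong to the genuine intersection: it is supported on a degenerate locus --- for instance the diagonal $\{\mathfrak{C}=0\}$ where, by (\ref{XYDiag}), the member of $\mathcal{F}$ degenerates and the transcendental lattice is not the generic $\Delta=8$ lattice singled out in Remark \ref{RemarkModular8} --- or it is a repeated copy of a genuine component. Discarding it and passing to the radical produces the three displayed factors. As an independent check, Example \ref{Exap5,8} forces both $\mathcal{S}_6$ and $\mathcal{S}_{10}$ to lie in $\mathcal{H}_5\cap\mathcal{H}_8$; writing $X=\mathfrak{B}/\mathfrak{A}^3$ and $Y=\mathfrak{C}/\mathfrak{A}^5$, the two linear factors $\mathfrak{A}^5-5\mathfrak{A}^2\mathfrak{B}+\mathfrak{C}=\mathfrak{A}^5(1-5X+Y)$ and $3125\mathfrak{A}^5-3375\mathfrak{A}^2\mathfrak{B}+243\mathfrak{C}=\mathfrak{A}^5(3125-3375X+243Y)$ must reproduce these two Shimura curves, while the third factor accounts for the remaining component of the intersection.

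An alternative, matching the emphasis on $\chi_8$, is to substitute the parametrization (\ref{parameter8}) of $\mathcal{H}_8$ into the much simpler modular equation (\ref{modular5}) for $\Delta=5$; this carves out the curve $\chi_8^{-1}(\mathcal{H}_5\cap\mathcal{H}_8)$ in the parameter plane $\mathbb{P}^2_{(q:r:s)}$. Transporting this curve to the coordinates $(\mathfrak{A}:\mathfrak{B}:\mathfrak{C})$ requires inverting (\ref{(ABCDXY)}) on $\mathrm{im}(\chi_8)$, and here one must handle the relation $\alpha=\frac{25}{36}\mathfrak{A}^2$, which forces a square root and hence a passage to a double cover of $\mathbb{P}^2_{(q:r:s)}$ before eliminating $(q:r:s)$. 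Either way the conceptual steps are routine; the real obstacle is computational: controlling the large elimination or factorization and, above all, correctly separating the genuine components of $\mathcal{H}_5\cap\mathcal{H}_8$ from the extraneous weight-$5$ factor that the calculation unavoidably produces.
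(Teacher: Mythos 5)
Your strategy is genuinely different from the paper's. The paper never substitutes into the $\Delta=8$ modular equation (\ref{Mod8}): its proof equates the two parametrizations, forming $F_i^{(5,8)}=$ ($i$-th component of $\Psi_5$ in (\ref{(ABCDXY)})) $-$ ($i$-th component of $\chi_8$ in (\ref{parameter8})) inside $\mathbb{C}[\mathfrak{A},\mathfrak{B},\mathfrak{C},r,s,q]$, and computes the elimination ideal $I_S=\langle F_1^{(5,8)},\dots,F_4^{(5,8)}\rangle\cap\mathbb{C}[\mathfrak{A},\mathfrak{B},\mathfrak{C}]$ by Gr\"obner bases; the theorem is precisely the statement that $I_S$ is principal with generator (\ref{MonoGene}). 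Your route --- substituting (\ref{(ABCDXY)}) into (\ref{Mod8}) and factoring --- is computationally lighter, but it does not compute the same object, and the discrepancy is exactly where your proof has a gap.

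The gap is the treatment of the extra weight-$5$ factor, which you correctly predict by the weight count ($30$ versus $25$) but neither identify nor justifiably remove. One can pin it down: the extra factor is $\mathfrak{C}$ itself. By (\ref{(ABCDXY)}), the diagonal $\{\mathfrak{C}=0\}$ is sent by $\Psi_5$ into $\{\gamma=0,\ \delta\neq0\}$, the locus of products of elliptic curves; indeed via (\ref{Mullerembedding}) the diagonal corresponds to period matrices ${\rm diag}(z,z)$, i.e.\ to $E_z\times E_z$, and such a point satisfies the singular relation (\ref{singularR}) with $(a,b,c,d,e)=(1,2,-1,0,0)$, giving $\Delta=8$. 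Hence the image of the diagonal lies in the closure of $\mathcal{H}_8$, so $\mathfrak{C}\mid P$ a priori, and the weight count then leaves room for nothing else: $P$ is a constant times $\mathfrak{C}$ times (\ref{MonoGene}). But taking $(a,b,c,d,e)=(1,1,-1,0,0)$ gives $\Delta=5$ as well, so under Definition \ref{HumbertDf} the diagonal \emph{does} lie in the set-theoretic pullback $\Psi_5^{-1}(\mathcal{H}_5\cap\mathcal{H}_8)$. Your assertion that the extra factor ``does not belong to the genuine intersection'' is therefore false as stated, and your hedge (``or it is a repeated copy of a genuine component'') confirms that you have no criterion for the discard; nothing you wrote distinguishes the factor $\mathfrak{C}$ (to be dropped) from the factor $L_1$ (to be kept), since both lie over loci of abelian surfaces with special structure.

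What is actually true, and what your proof needs, is that the theorem computes the closure of the Jacobian part $(\mathcal{H}_5\cap\mathcal{H}_8)\cap\mathcal{M}_2$, and the factor to be discarded is exactly the one whose $\Psi_5$-image lies in the product locus $\{\gamma=0\}=\mathcal{A}_2\setminus\mathcal{M}_2$. The paper's elimination enforces this silently: on ${\rm im}(\chi_8)$ one has $\gamma_8=-64r^3s^2$, so $\gamma_8=0$ forces $\delta_8=0$, hence no point of ${\rm im}(\chi_8)$ is a product of elliptic curves and the diagonal never enters the elimination at all. In your method the product locus unavoidably enters through the Zariski closure of $\mathcal{H}_8$, so the product-locus criterion must be stated and applied explicitly; with it, your argument closes and gives a valid alternative proof. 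Two minor corrections: your stated reason for discarding is off target (on $\mathfrak{C}=0$ the $K3$ surface $S(\mathfrak{A}:\mathfrak{B}:0)$ does not degenerate --- its Picard number jumps; it is the abelian surface that degenerates to a product), and your closing sanity check overreaches slightly, since Example \ref{Exap5,8} only gives that $\mathcal{S}_6$ and $\mathcal{S}_{10}$ are contained in the intersection; matching them to the two \emph{linear} factors is the content of the later Theorem \ref{ThmS6S10}, not something forced at this stage.
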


\begin{proof}
We have the parametrization $\Psi_5$ in (\ref{(ABCDXY)}) ($\chi_8$ in (\ref{parameter8}), resp.) of the Humbert surface $\mathcal{H}_5$ ($\mathcal{H}_8$, resp.).

For a generic $(\alpha:\beta:\gamma:\delta) \in \mathcal{H}_5 \cap \mathcal{H}_8 \subset \mathbb{P}(2:3:5:6)$,
there exist $(\mathfrak{A}:\mathfrak{B}:\mathfrak{C}) \in \mathbb{P}(1:3:5)$ and $(r:s:q)\in \mathbb{P}^2(\mathbb{C})$
such that
\begin{align}\label{ABCDAtarimae}
\begin{cases}
& \alpha=\alpha_5(\mathfrak{A}:\mathfrak{B}:\mathfrak{C})=\alpha_8(r,s,q),\\
& \beta=\beta_5(\mathfrak{A}:\mathfrak{B}:\mathfrak{C})=\beta_8(r,s,q),\\
& \gamma=\gamma_5(\mathfrak{A}:\mathfrak{B}:\mathfrak{C})=\gamma_8(r,s,q),\\
& \delta=\delta_5(\mathfrak{A}:\mathfrak{B}:\mathfrak{C})=\delta_8(r,s,q).\\
\end{cases}
\end{align}
We have the  polynomial $F_1^{(5,8)}$ ($F_2^{(5,8)},F_3^{(5,8)},F_4^{(5,8)}$, resp) in $\mathbb{C}[\mathfrak{A}:\mathfrak{B}:\mathfrak{C},r,s,q] $ of  weight $2$ ($3,5,6$, resp.) :
\begin{align}
\begin{cases}
& F_1^{(5,8)}(\mathfrak{A},\mathfrak{B},\mathfrak{C},r,s,q)=\alpha_5(\mathfrak{A}:\mathfrak{B}:\mathfrak{C})-\alpha_8(r,s,q),\\
& F_2^{(5,8)} (\mathfrak{A},\mathfrak{B},\mathfrak{C},r,s,q)=\beta_5(\mathfrak{A}:\mathfrak{B}:\mathfrak{C})-\beta_8(r,s,q),\\
&F_3^{(5,8)}(\mathfrak{A},\mathfrak{B},\mathfrak{C},r,s,q)=\gamma_5(\mathfrak{A}:\mathfrak{B}:\mathfrak{C})-\gamma_8(r,s,q),\\
&F_4^{(5,8)}(\mathfrak{A},\mathfrak{B},\mathfrak{C},r,s,q)=\delta_5(\mathfrak{A}:\mathfrak{B}:\mathfrak{C})-\delta_8(r,s,q).\\
\end{cases}
\end{align}

Then, we have the weighted homogeneous ideal
$$
I=\langle F_1^{(5,8)},F_2^{(5,8)},F_3^{(5,8)},F_4^{(5,8)} \rangle \subset \mathbb{C}[\mathfrak{A},\mathfrak{B},\mathfrak{C},r,s,q].
$$
Let $V(I)$ the zero set of the ideal $I$.
 This is an analytic subset of $\mathbb{P}(1:3:5:1:1:1)={\rm Proj}(\mathbb{C}[\mathfrak{A},\mathfrak{B},\mathfrak{C},r,s,q])$. 
From (\ref{ABCDAtarimae}), the point $(\mathfrak{A}:\mathfrak{B}:\mathfrak{C}:r:s:q) \in V(I)$ gives $(\alpha:\beta:\gamma:\delta)\in \mathcal{H}_5\cap \mathcal{H}_8$.
Let $v:\mathbb{P}(1:3:5:1:1:1) \rightarrow \mathbb{P}(1:3:5)$
 be the canonical projection
given by 
$(\mathfrak{A}:\mathfrak{B}:\mathfrak{C}:r:s:q) \mapsto (\mathfrak{A}:\mathfrak{B}:\mathfrak{C})$.
The Zariski closure  of the image $v(V(I))$ corresponds to the zero set $V(I_S)$ of the elimination ideal
$$
I_S=I \cap \mathbb{C}[\mathfrak{A},\mathfrak{B},\mathfrak{C}].
$$
From (\ref{ABCDAtarimae}) again,
it follows that $(\alpha_5(\mathfrak{A}:\mathfrak{B}:\mathfrak{C}):\beta_5(\mathfrak{A}:\mathfrak{B}:\mathfrak{C}):\gamma_5(\mathfrak{A}:\mathfrak{B}:\mathfrak{C}):\delta_5(\mathfrak{A}:\mathfrak{B}:\mathfrak{C})) \in \mathcal{H}_5 \cap \mathcal{H}_8$ 
if and only if $(\mathfrak{A}:\mathfrak{B}:\mathfrak{C}) \in V(I_S)$.
 
By  the theory of Gr\"obner basis (see \cite{CLO})
and a computer aided calculation powered by Mathematica, 
we can show that the ideal $I_S$ is a principal ideal generated by  the polynomial  in (\ref{MonoGene}).
Thus, the theorem is proved.
\end{proof}

Using the explicit  expression of the parameters of $\mathcal{F}$ in (\ref{Ourtheta}), 
let us study the divisor $\Psi_5^* (\mathcal{H}_5 \cap \mathcal{H}_8)  \subset \mathbb{P}(1:3:5)$ in detail.
According to Example \ref{Exap5,8}, 
$\mathcal{H}_5 \cap \mathcal{H}_8$ contains the Shimura curves $\mathcal{S}_6$ and $\mathcal{S}_{10}$
as irreducible components. 
We shall give explicit forms of the pull-backs of these two Shimura curves as divisors in $\mathbb{P}(1:3:5)$.
We note that the pull-back $\Psi_5^*(\mathcal{S}_6)$ ($\Psi_5^*(\mathcal{S}_{10})$, resp.) is isomorphic to $\mathcal{S}_6$ ($\mathcal{S}_{10}$, resp.)
as varieties because $\Psi_5$ is an embedding of varieties and $\mathcal{S}_6 $ and $\mathcal{S}_{10}$ is contained in the image ${\rm Im}(\Psi_5)$.

Set
\begin{align}\label{R1R2R3}
\begin{cases}
&R_1:\mathfrak{A}^5 - 5 \mathfrak{A}^2 \mathfrak{B} + \mathfrak{C}=0 ,\\
&R_2:3125 \mathfrak{A}^5 - 3375 \mathfrak{A}^2 \mathfrak{B} + 
   243 \mathfrak{C} =0,\\
 & L_1: 38400000000000 \mathfrak{A}^9 \mathfrak{B}^2 + 120528000000000 \mathfrak{A}^6 \mathfrak{B}^3\\
& \quad \quad   - 
   4100625000000 \mathfrak{A}^3 \mathfrak{B}^4 - 184528125000000 \mathfrak{B}^5 + 
   2560000000000 \mathfrak{A}^10 \mathfrak{C} \\
   & \quad \quad + 6998400000000 \mathfrak{A}^7 \mathfrak{B} \mathfrak{C} + 
   34698942000000 \mathfrak{A}^4 \mathfrak{B}^2 \mathfrak{C} + 42539883750000 \mathfrak{A} \mathfrak{B}^3 \mathfrak{C} \\
   & \quad \quad + 
   2576431800000 \mathfrak{A}^5 \mathfrak{C}^2 + 2714325066000 \mathfrak{A}^2 \mathfrak{B} \mathfrak{C}^2 + 
   146211169851 \mathfrak{C}^3=0.
\end{cases}
\end{align}

\begin{lem}\label{R3lem}
The curve $L_1$ in (\ref{R1R2R3}) is  neither $\Psi_5^*(\mathcal{S}_6)$ nor $\Psi_5^*(\mathcal{S}_{10}).$
\end{lem}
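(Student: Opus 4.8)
The plan is to separate $L_1$ from the two Shimura curves by testing all three components against the unique cusp of $\mathbb{P}(1:3:5)$. By Proposition \ref{KleinH} (2) this cusp is the point $(\mathfrak{A}:\mathfrak{B}:\mathfrak{C})=(1:0:0)$. Since a Shimura curve is a \emph{compact} curve with no cusps, I expect each pull-back $\Psi_5^*(\mathcal{S}_D)$ to avoid $(1:0:0)$, whereas I will check directly that $L_1$ passes through it; this single test then settles the lemma.

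First I would evaluate $\Psi_5$ at the cusp. Substituting $(\mathfrak{A}:\mathfrak{B}:\mathfrak{C})=(1:0:0)$ into (\ref{(ABCDXY)}) gives $\Psi_5(1:0:0)=(\tfrac{25}{36}:-\tfrac{125}{216}:0:0)$, a point with $\gamma=\delta=0$. Recalling $\gamma=8I_{10}$ and $\delta=\tfrac23 I_2 I_{10}$ from (\ref{abcd}), the locus $\{\gamma=\delta=0\}$ is exactly $\{I_{10}=0\}$, i.e. the locus of products of elliptic curves that is removed from $\mathcal{A}_2$. Because an abelian surface with quaternion multiplication is simple, the Shimura curves $\mathcal{S}_6$ and $\mathcal{S}_{10}$ lie entirely in $\mathcal{M}_2\subset\mathcal{A}_2$ and never meet $\{\gamma=\delta=0\}$. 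Since these curves are compact they are closed in $\mathbb{P}(2:3:5:6)$, so the preimage $\Psi_5^{-1}(\mathcal{S}_D)$ is closed in $\mathbb{P}(1:3:5)$ and every one of its points maps into $\mathcal{S}_D$; in particular $\Psi_5^*(\mathcal{S}_D)$ cannot contain $(1:0:0)$, whose image lies in the excluded locus.

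It then remains to observe that $(1:0:0)\in L_1$: every monomial appearing in the defining polynomial of $L_1$ in (\ref{R1R2R3}) is divisible by $\mathfrak{B}$ or by $\mathfrak{C}$, so that polynomial vanishes at $(1:0:0)$. As $L_1$ passes through the cusp while neither $\Psi_5^*(\mathcal{S}_6)$ nor $\Psi_5^*(\mathcal{S}_{10})$ does, $L_1$ equals neither of them. For consistency one also checks $R_1(1:0:0)=1\neq0$ and $R_2(1:0:0)=3125\neq0$, so $R_1$ and $R_2$ remain the admissible candidates for the Shimura curves (their precise identification being deferred to Theorem \ref{ThmS6S10}). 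The one delicate point is the compactness step: it is precisely the absence of cusps on $\mathcal{S}_D$ stressed in the Introduction that makes $\Psi_5^{-1}(\mathcal{S}_D)$ a closed curve disjoint from the cusp, rather than merely a curve whose Zariski closure might acquire the cusp point. This is where the geometry of Shimura curves, and not a purely computational comparison, does the work.
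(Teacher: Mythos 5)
Your strategy coincides with the paper's: both proofs isolate the cusp $(1:0:0)$ of $\mathbb{P}(1:3:5)$ (Proposition \ref{KleinH} (2)), check that $L_1$ passes through it while the pull-backs of the compact Shimura curves cannot, and both use compactness of $\mathcal{S}_6,\mathcal{S}_{10}$ to ensure that $\Psi_5^*(\mathcal{S}_D)$ is a closed curve rather than a curve whose closure might acquire the cusp. Your computations are also correct: $\Psi_5(1:0:0)=(\tfrac{25}{36}:-\tfrac{125}{216}:0:0)$, every monomial of $L_1$ in (\ref{R1R2R3}) is divisible by $\mathfrak{B}$ or $\mathfrak{C}$, and $R_1,R_2$ do not vanish at the cusp (the paper instead computes the three points of $L_1\cap\{\mathfrak{C}=0\}$).

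The genuine gap is in the step where you justify why $\Psi_5^*(\mathcal{S}_D)$ misses the cusp. First, the claim that ``an abelian surface with quaternion multiplication is simple'' is false: Definition \ref{QMDef} does not force simplicity, and at the infinitely many CM points of $\mathcal{S}_D$ the surface is isogenous to the square of a CM elliptic curve, so that ${\rm End}_0(A)\simeq M_2(K)$ is not a division algebra. Simplicity holds only at generic points of $\mathcal{S}_D$, so it cannot keep \emph{every} point of $\mathcal{S}_D$ out of any locus. Second, $\{\gamma=\delta=0\}$ is not the locus of products of elliptic curves: that locus, $\mathcal{H}_1=\mathcal{A}_2\setminus\mathcal{M}_2$, lies \emph{inside} $\mathcal{A}_2$ (it is removed only to form $\mathcal{M}_2$) and in the $\mathbb{P}(2:3:5:6)$-model it corresponds to $\{\gamma=0\}\cap\mathcal{A}_2$, whereas $\{\gamma=\delta=0\}$ is the boundary lying outside $\mathcal{A}_2$ altogether; the birational map (\ref{abcd}) collapses the surface $\{I_{10}=0\}$ onto this one-dimensional boundary, so one cannot transport the identification ``$I_{10}=0$ = products'' through it. Whether $\mathcal{S}_D$ can meet $\mathcal{H}_1$ (i.e.\ whether a principally polarized QM surface can be isomorphic to a polarized product) is a nontrivial question in its own right (cf.\ Rotger \cite{Rotger}), and your argument would need an answer to it if the cusp's image really sat in the product locus. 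The repair is the paper's own, much simpler observation: $\mathcal{S}_D\subset\mathcal{H}_5$, and by Remark \ref{HumbertHilbertRem}, Theorem \ref{PsiThm} and Proposition \ref{KleinH} (2) the Humbert surface $\mathcal{H}_5$ is the image under $\Psi_5$ of the \emph{open} Hilbert modular surface $\mathbb{P}(1:3:5)\setminus\{(1:0:0)\}$; hence the closed curve $\Psi_5^{-1}(\mathcal{S}_D)$ is automatically contained in the complement of the cusp, with no input from the endomorphism theory at all.
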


\begin{proof}
By a direct calculation, 
the divisor $L_1$ intersects the divisor $\{\mathfrak{C}=0\}$ at the three points 
\begin{align}\label{L1Diag}
(\mathfrak{A}:\mathfrak{B}:\mathfrak{C})=(1:0:0),\Big(1:\frac{25}{27}:0\Big), \Big(1:-\frac{64}{135}\Big).
\end{align}

On the other hand, the Shimura curves $\mathcal{S}_6, \mathcal{S}_{10}$ are already compact. Then, $\Psi_5^*(\mathcal{S}_6)$ and $\Psi_5^*(\mathcal{S}_{10})$
never touch the cusp of $ \langle PSL(2,\mathcal{O}_5), \tau \rangle \backslash (\mathbb{H}\times \mathbb{H})$.
According to Proposition \ref{KleinH} (2),
the cusp  is given by $(\mathfrak{A}:\mathfrak{B}:\mathfrak{C})=(1:0:0)$.
So, from (\ref{L1Diag}), the curve $L_1$ is neither $\Psi^*_5(\mathcal{S}_6)$ nor $\Psi^*_5(\mathcal{S}_{10})$.
\end{proof}

According to \cite{HM},
we have the quaternion modular embedding $\check{\Omega}_6:\mathbb{H} \rightarrow \mathfrak{S}_2$ for $D=6$ given by
\begin{align}\label{Omega6}
w \mapsto \check{\Omega}_6(w)=
\begin{pmatrix}
\vspace{2mm} \displaystyle \frac{3w}{2} - \frac{1}{4w} &  \displaystyle -\frac{3\sqrt{2} w}{4} -\frac{1}{2} - \frac{\sqrt{2}}{8w}\\
\displaystyle -\frac{3\sqrt{2} w}{4} -\frac{1}{2} - \frac{\sqrt{2}}{8w} & \displaystyle \frac{3w}{4} - \frac{1}{2} -\frac{1}{8w}
\end{pmatrix}
\end{align}
for $w\in \mathbb{H}$.

\begin{rem}
The modular embedding $\check{\Omega}_6$ is different from the embedding $\Omega$ in Section 1.3.
However,
as we noted in Remark \ref{RemRot},
we have the unique choice of the Shimura curve $\mathcal{S}_6$.
So, our argument for  discriminant $6$ is not dependent on the choice of a quaternion modular embeddings.
\end{rem}

For
$$
M=\begin{pmatrix}
-1 & 0 & 1 &0\\
0 &0 &0 &1 \\
 -1 &0 &0 &0 \\
0 &-1 &0 &0 \\
\end{pmatrix}
=
\begin{pmatrix}
A &  B \\
C & D
\end{pmatrix}
\in Sp(4,\mathbb{Z}).
$$
and $w \mapsto \check{\Omega}_6 (w)$ in (\ref{Omega6}),
we  have another modular embedding $ \mathbb{H} \rightarrow \mathfrak{S}_2$ given by $w \mapsto \tilde{\Omega}_6(w) $ where
\begin{align}
\tilde{\Omega}_6(w) & = (A \check{\Omega}_6 (w) + B) (C \check{\Omega}_6 (w) +D)^{-1}\notag \\
&\label{tilde6}
=
\frac{1}{-1+\sqrt{2} +8 w + 6(1+\sqrt{2})w^2}
\begin{pmatrix}
-2+\sqrt{2} +4 w +6(2+\sqrt{2})w^2 & \sqrt{2}+4w +6\sqrt{2} w^2 \\
\sqrt{2} + 4 w + 6 \sqrt{2} w^2 & -2 +12 w^2 
\end{pmatrix}.
\end{align}
Setting $\tilde{\Omega}_6(w)=\begin{pmatrix} \tilde{\tau}_1(w) & \tilde{\tau}_2(w) \\ \tilde{\tau}_2 (w) & \tilde{\tau}_3(w)  \end{pmatrix}$,
it holds
\begin{align}
-\tilde{\tau}_1 (w) + \tilde{\tau}_2 (w) + \tilde{\tau}_3 (w ) =0.
\end{align}
Especially, 
$\tilde{\Omega}_6$  embeds $\mathbb{H}$ to  $N_5$ in (\ref{N5}).
Recall that the surface $N_5 $ is parametrized by the Hilbert modular embedding $\mu_5$ in (\ref{Mullerembedding}).

\begin{lem}\label{lemR1}
Let $\square$ be the diagonal,
 $\mu_5$ be the Hilbert modular embedding  given by (\ref{Mullerembedding})
and
 $pr$ be the canonical projection $\mathfrak{S}_2\rightarrow  Sp(4,\mathbb{Z}) \backslash  \mathfrak{S}_2 =\mathcal{A}_2$.
Set $M_6=pr \circ \mu_5 (\square) (\subset \mathcal{A}_2)$.
Then, $\mathcal{S}_6$ intersects $M_6$ at only one point 
$pr \circ \mu_5 (1+\sqrt{-1},1+\sqrt{-1})$. 
\end{lem}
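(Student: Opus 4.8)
The plan is to realize both $\mathcal{S}_6$ and $M_6$ explicitly inside the Humbert surface $\mathcal{H}_5$ and to reduce the intersection to a single equation in the variable $w$. First I would record how the diagonal sits inside $\mathfrak{S}_2$: putting $z_1=z_2=z$ in (\ref{Mullerembedding}) collapses $\mu_5$ to the diagonal period matrix $\mu_5(z,z)=\begin{pmatrix} z & 0 \\ 0 & z\end{pmatrix}$, so that $\mu_5(\square)=\{\Omega\in N_5\mid \tau_2=0\}$. Via the period coordinates this is, by (\ref{XYDiag}), exactly the locus $\{Y=0\}$ of $\mathcal{H}_5$; more precisely $M_6$ is the zero set on $\mathcal{H}_5$ of the modular function $Y$, because $Y=0$ cuts out the image of the diagonal. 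On the other side, $\mathcal{S}_6=pr\circ\tilde\Omega_6(\mathbb{H})$ and, as established just before the statement, $\tilde\Omega_6(\mathbb{H})\subset N_5$.

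The concrete computation is then the following. Because $\tilde\Omega_6(w)$ already lies in $N_5$, it suffices to locate the $w$ for which $\tilde\Omega_6(w)$ is itself a diagonal matrix, i.e. for which its off-diagonal entry $\tilde\tau_2(w)$ vanishes; any such $w$ produces a point $pr\,\tilde\Omega_6(w)\in M_6$. Reading $\tilde\tau_2(w)$ off (\ref{tilde6}), this is the quadratic equation $\sqrt2+4w+6\sqrt2\,w^2=0$, whose discriminant is $-32$, so it has the unique root $w_0=-\tfrac{\sqrt2}{6}+\tfrac{\sqrt{-1}}{3}$ in $\mathbb{H}$. Substituting $w_0$ into (\ref{tilde6}) and simplifying the resulting quadratic irrationalities, I expect to find $\tilde\Omega_6(w_0)=\begin{pmatrix} 1+\sqrt{-1} & 0 \\ 0 & 1+\sqrt{-1}\end{pmatrix}=\mu_5(1+\sqrt{-1},1+\sqrt{-1})$, which exhibits $pr\circ\mu_5(1+\sqrt{-1},1+\sqrt{-1})$ as a point of $\mathcal{S}_6\cap M_6$.

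To see that this is the only intersection point, I would view $\mathcal{S}_6\cap M_6$ as the zero set on $\mathcal{S}_6$ of the ($\langle PSL(2,\mathcal{O}_5),\tau\rangle$-invariant) function $Y$. By Theorem \ref{ShimuraModular} together with Lemma \ref{R3lem}, $\Psi_5^*(\mathcal{S}_6)$ is one of the two lines $R_1,R_2$ of (\ref{R1R2R3}); on either line the defining equation is linear, so $Y$ is an affine-linear, hence degree-one, function of $X$ and has a single zero in the chart $\{\mathfrak{A}\neq0\}$. This forces $\mathcal{S}_6\cap M_6$ to consist of the one point found above. As a by-product it pins down $\Psi_5^*(\mathcal{S}_6)=R_2$, since its $Y$-zero sits at $X=\tfrac{25}{27}$, the $X$-value of the diagonal point $\mu_5(1+\sqrt{-1},1+\sqrt{-1})$ (where $J=1$), rather than at $X=\tfrac15$ as $R_1$ would require.

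I expect the main obstacle to be twofold. Analytically, the clean collapse $\tilde\Omega_6(w_0)=\mu_5(1+\sqrt{-1},1+\sqrt{-1})$ has to be verified by an exact manipulation with $\sqrt2$ and $\sqrt{-1}$, and this is where care is needed. Conceptually, the subtle point is that the intersection is formed in the quotient $\mathcal{A}_2$, so a priori a point of $\mathcal{S}_6$ could be $Sp(4,\mathbb{Z})$-equivalent to a diagonal matrix without being diagonal itself. This is precisely why I work inside $N_5$, where $pr$ is the Hilbert modular quotient and the whole diagonal orbit is cut out globally by $\{Y=0\}$; together with the degree-one count this captures all intersection points, not merely the literally diagonal one, and bounds their number by one. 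Finally I would note that the orbifold point $(0:1:0)$, where $Y$ is indeterminate and both $R_2$ and the closure of $M_6$ pass through, does not furnish a further intersection in the working chart, since $Y$ has a pole there along $\mathcal{S}_6$.
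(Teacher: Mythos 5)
Your proposal contains the paper's proof as its computational core: the paper likewise observes that $\tilde{\Omega}_6(\mathbb{H})\subset N_5$, reduces the intersection to the vanishing of the off-diagonal entry $\tilde{\tau}_2(w)$ of (\ref{tilde6}), i.e.\ to $6\sqrt{2}w^2+4w+\sqrt{2}=0$, notes that the unique root in $\mathbb{H}$ is $w_0=\frac{1}{3}\bigl(\sqrt{-1}-\frac{1}{\sqrt{2}}\bigr)$ (your $w_0$, written differently), and checks by direct calculation that $\tilde{\Omega}_6(w_0)=\mu_5(1+\sqrt{-1},1+\sqrt{-1})$; this identity does hold. Where you genuinely depart from the paper is the uniqueness step. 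The paper is content with computing the literal intersection $\mathrm{Im}(\tilde{\Omega}_6)\cap\mu_5(\square)$ upstairs in $\mathfrak{S}_2$ and says nothing about why a point of $\mathcal{S}_6\cap M_6$ in the quotient $\mathcal{A}_2$ must lift to such a literal intersection --- exactly the subtlety you isolate (a point $\tilde{\Omega}_6(w)$ could be $Sp(4,\mathbb{Z})$-equivalent to a diagonal period matrix without being diagonal). Your replacement --- $M_6\subset\{Y=0\}$ by (\ref{XYDiag}), $\Psi_5^*(\mathcal{S}_6)\in\{R_1,R_2\}$ by Theorem \ref{ShimuraModular}, Lemma \ref{R3lem} and Example \ref{Exap5,8} (all established before this lemma, so there is no circularity), and the observation that $Y$ restricted to either line is affine-linear in $X$ --- is sound in the chart $\{\mathfrak{A}\neq 0\}$ and is strictly more careful than the paper's argument; it also delivers the identification $\Psi_5^*(\mathcal{S}_6)=R_2$, i.e.\ the key step of Theorem \ref{ThmS6S10}, as a by-product.

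One caveat on your closing remark: the point $(0:1:0)$ is not merely in the closure of $M_6$; it is the image of $(z,z)$ with $J(z)=0$, hence lies in $M_6$ itself, and it lies on both $R_1$ and $R_2$. Saying that $Y$ has a pole there along $\mathcal{S}_6$ does not expel it from the set-theoretic intersection, because $Y=\mathfrak{C}/\mathfrak{A}^5$ is genuinely indeterminate at that point; ruling it out would need a separate moduli-theoretic argument, which neither you nor the paper supplies (the paper's proof is silent about this point altogether). This shared loose end is harmless for the intended application, since the proof of Theorem \ref{ThmS6S10} only uses the existence of the intersection point at $(X,Y)=\bigl(\frac{25}{27},0\bigr)$ together with the fact that $R_1$ misses it.
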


\begin{proof} 
The embedding $w \mapsto \tilde{\Omega}_6 (w)$ parametrizes a curve in the surface $N_5$.
The surface $N_5$ is parametrized by $(z_1,z_2)\in \mathbb{H}\times \mathbb{H}$
via $\mu_5$.
Hence,
the set $\mu_5(\square) \cap {\rm Im}(\tilde{\Omega}_6)$ is given by the condition $\tilde{\tau_2}(w)=6 \sqrt{2} w^2 +4 w +\sqrt{2} =0$. 
The solution in the upper half plane $\mathbb{H}$ of the equation $6 \sqrt{2} w^2 +4 w +\sqrt{2} =0 $ is only
$$
w=\frac{1}{3}\Big(\sqrt{-1}-\frac{1}{\sqrt{2}}\Big).
$$
By a direct calculation, we can check  that 
$$
\tilde{\Omega}_6 (\frac{1}{3}\Big(\sqrt{-1}-\frac{1}{\sqrt{2}}\Big)) = \mu_5 (1+\sqrt{-1},1+\sqrt{-1}).
$$
\end{proof}

We note that a point in $N_5={\rm Im}(\mu_5)$ 
has an expression by the period mapping for the family $\mathcal{F}$ of $M_5$-marked $K3$ surfaces  (see Section 2.3).
The explicit theta expression (\ref{Ourtheta}) of the inverse of the period mapping for $\mathcal{F}$
enables us to study  the quaternion embedding $\tilde{\Omega}_6$ given by (\ref{tilde6}) in detail.

\begin{thm}\label{ThmS6S10}
The Shimura curve $\Psi_5^*(\mathcal{S}_6)$ ($\Psi_5^*(\mathcal{S}_{10})$, resp.) is given by
the divisor $R_2$ ($R_1$, resp.) in (\ref{R1R2R3}).
\end{thm}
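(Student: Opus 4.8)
The plan is to identify $\Psi_5^*(\mathcal{S}_6)$ and $\Psi_5^*(\mathcal{S}_{10})$ among the irreducible factors of the polynomial (\ref{MonoGene}) defining $\Psi_5^*(\mathcal{H}_5\cap\mathcal{H}_8)$, which was already computed in Theorem \ref{ShimuraModular}. By Example \ref{Exap5,8} both Shimura curves lie in $\mathcal{H}_5\cap\mathcal{H}_8$; since they are irreducible curves and $\Psi_5$ is an embedding of varieties, the pull-backs $\Psi_5^*(\mathcal{S}_6)$ and $\Psi_5^*(\mathcal{S}_{10})$ are two distinct irreducible components of the curve cut out by (\ref{MonoGene}). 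That polynomial factors as $R_1\cdot R_2\cdot L_1$ in the notation of (\ref{R1R2R3}), where $R_1$ and $R_2$ are lines in the $(X,Y)$-coordinates, hence irreducible, and $L_1$ is a genuine higher-degree curve. Lemma \ref{R3lem} excludes $L_1$, so I would conclude that, as an unordered pair, $\{\Psi_5^*(\mathcal{S}_6),\Psi_5^*(\mathcal{S}_{10})\}=\{R_1,R_2\}$.

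It then remains only to decide which line is which, and the idea is to test a single explicitly computable point of $\mathcal{S}_6$. Lemma \ref{lemR1} supplies one: the intersection of $\mathcal{S}_6$ with the image $M_6=pr\circ\mu_5(\square)$ of the diagonal is the point $pr\circ\mu_5(1+\sqrt{-1},1+\sqrt{-1})$, which lies on the diagonal $\square$. On $\square$ the coordinates $(X,Y)$ are read off directly from (\ref{XYDiag}): there $Y=0$ and $X=\frac{25}{27}\frac{1}{J(z)}$. Since $1+\sqrt{-1}$ is $SL(2,\mathbb{Z})$-equivalent to $\sqrt{-1}$ and $J$ takes the value $1$ at $\sqrt{-1}$, this point of $\mathcal{S}_6$ has coordinates $(X,Y)=(\tfrac{25}{27},0)$.

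Finally I would substitute $(X,Y)=(\tfrac{25}{27},0)$ into the affine forms of $R_1$ and $R_2$, namely $1-5X+Y=0$ and $3125-3375X+243Y=0$, obtained by dividing the weight-$5$ generators in (\ref{R1R2R3}) by $\mathfrak{A}^5$. One computes $3125-3375\cdot\frac{25}{27}=0$, whereas $1-5\cdot\frac{25}{27}=-\frac{98}{27}\neq 0$. Thus the computed point of $\mathcal{S}_6$ lies on $R_2$ but not on $R_1$; as $\Psi_5^*(\mathcal{S}_6)$ is one of the two lines, it must equal $R_2$, and hence $\Psi_5^*(\mathcal{S}_{10})=R_1$ by elimination.

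I expect the real content to sit in the preparatory Lemmas \ref{R3lem} and \ref{lemR1} rather than in these closing substitutions. The main obstacle is to transport the quaternionic modular embedding $\tilde{\Omega}_6$ of (\ref{tilde6}) onto the Hilbert/$K3$ period picture so that a concrete point of $\mathcal{S}_6$ acquires computable $(\mathfrak{A}:\mathfrak{B}:\mathfrak{C})$-coordinates; this is exactly where the explicit theta expression (\ref{Ourtheta}) and the diagonal formula (\ref{XYDiag}) are doing the work. A subtler point to keep honest is the component-matching: one must know that $R_1$ and $R_2$ are the only non-$L_1$ irreducible factors and that $\mathcal{S}_6\neq\mathcal{S}_{10}$ (clear, as their discriminants differ), so that the two compact, cusp-avoiding Shimura curves occupy precisely the two line factors left after Lemma \ref{R3lem}.
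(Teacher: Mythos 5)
Your proposal is correct and takes essentially the same route as the paper's own proof: it reduces to the two lines $R_1,R_2$ via Theorem \ref{ShimuraModular} and Lemma \ref{R3lem}, then uses Lemma \ref{lemR1} together with the diagonal formula (\ref{XYDiag}) to see that $\mathcal{S}_6$ passes through $(X,Y)=\bigl(\tfrac{25}{27},0\bigr)$, which lies on $R_2$ but not on $R_1$. Your added justifications (the $SL(2,\mathbb{Z})$-equivalence of $1+\sqrt{-1}$ with $\sqrt{-1}$ giving $J=1$, and the distinctness of $\mathcal{S}_6$ and $\mathcal{S}_{10}$) only make explicit what the paper leaves implicit.
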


\begin{proof}
According to Theorem \ref{ShimuraModular},
the Shimura curves $\Psi_5^*(\mathcal{S}_6)$ and $\Psi_5^* (\mathcal{S}_{10})$
are irreducible components of the union of the curves $R_1\cup  R_2 \cup L_1$.
However, from Lemma \ref{R3lem},
 the  curve $L_1$ never give any Shimura curve.
According to (\ref{XY}),
the $(X,Y)$-plane gives an affine plane of $\mathbb{P}(1:3:5)$.
Due to Lemma \ref{lemR1},
the Shimura curve $\Psi_5^*(\mathcal{S}_6)$ passes the point
$$
P_0=(X(1+\sqrt{-1}, 1+\sqrt{-1}),Y(1+\sqrt{-1}, 1+\sqrt{-1})).
$$
Since we have the formula (\ref{XYDiag}), 
$$
P_0=\Big(\frac{25}{27} \frac{1}{J(1+\sqrt{-1})}, 0\Big)=\Big(\frac{25}{27},0\Big)\in (X,Y)-\text{plane}.
$$

On the other hand, by a direct observation,
the curve $R_1$ does not touch the point $P_0$ and the curve $R_2$ passes the point $P_0$.

Therefore,
the Shimura curve $\Psi_5^* (\mathcal{S}_6)$ is given by the curve $R_2$.
The other curve $R_1$ corresponds to the Shimura curve $\Psi_5^* (\mathcal{S}_{10})$.  
\end{proof}

In Figure 5,
the Shimura curves $R_2=\Psi_5^*(\mathcal{S}_6)$, $R_1=\Psi_5^*(\mathcal{S}_{10})$
and the curve coming from Klein's icosahedral equation
$$
-1728\mathfrak{B}^5 +720\mathfrak{A}\mathfrak{C}\mathfrak{B}^3 -80 \mathfrak{A}^2 \mathfrak{C}^2 \mathfrak{B} +64\mathfrak{A}^3(5\mathfrak{B}^2-\mathfrak{A}\mathfrak{C})^2+\mathfrak{C}^3=0
$$
(see (\ref{KleinRel})).

\begin{figure}[h]
\center
\includegraphics[scale=0.22,clip]{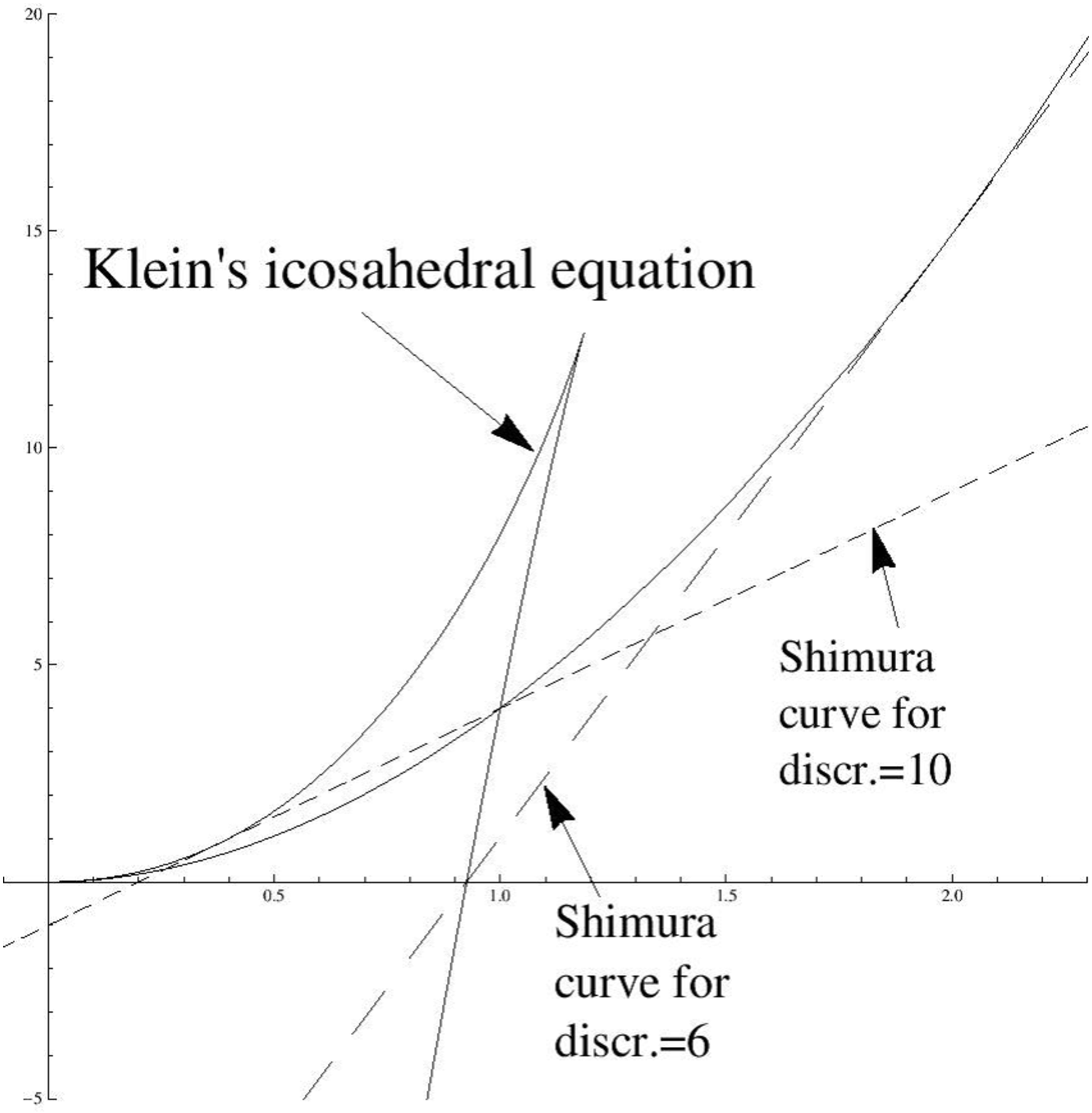}
\caption{The Shimura curves of discriminant $6$ and $10$ and the Klein's icosahedral equation}
\end{figure}

\subsection{The genus $2$ curves of Hashimoto and Murabayashi and the family $\mathcal{K}_j$  of Kummer surfaces }
Hashimoto and Murabayashi \cite{HM} 
studied  the moduli space of  genus $2$ curves  and the Shimura curves for discriminant $6$ and $10$.
In this subsection, let us see the relation between the results of \cite{HM} and Theorem \ref{ThmS6S10}.

Let $C$ be a Riemann surface of genus $2$.
The Jacobian variety ${\rm Jac}(C)$ is a principally polarized abelian surface.
Let $T$ be the involution on ${\rm Jac}(C)$ induced by $(z_1,z_2) \mapsto (-z_1, -z_2)$ on the universal covering $\mathbb{C}^2$.
The minimal resolution $\overline{{\rm Jac}(C)/\langle  id, T\rangle}$ is called the Kummer surface and denoted by ${\rm Kum}(C)$.

For a Riemann surface of  genus $2$  given by
 \begin{align} \label{genus2}
 C(\lambda_1,\lambda_2,\lambda_3) : Y^2 =X (X-1) (X-\lambda_1)(X-\lambda_2) (X-\lambda_3),
 \end{align}
 Humbert \cite{Humbert} obtained explicit conditions when the corresponding Jacobian variety ${\rm Jac}( C(\lambda_1,\lambda_2,\lambda_3))$ has 
 real multiplication for $\Delta=5$ and $\Delta=8$.
These conditions are given by equations, 
called Humbert's modular equations, 
in $\lambda_1,\lambda_2$  and $\lambda_3$ (see \cite{HM} Theorem 2.9 and 2.11).
For example,  Humbert's module equation for $\Delta=5$ is given by
\begin{align}\label{HumbertModular}
& 4(\lambda_1^2 \lambda_3 - \lambda_2^2 +\lambda_3^2 (1-\lambda_1) + \lambda_2 ^2\lambda_3)(\lambda_1^2\lambda_2\lambda_3 -\lambda_1 \lambda_2^2 \lambda_3) \notag \\
&=(\lambda_1^2(\lambda_2+1)\lambda_3 -\lambda_2^2 (\lambda_1 +\lambda_3) +(1-\lambda_1)\lambda_2\lambda_3^2 +\lambda_1(\lambda_2 -\lambda_3)  )^2.
\end{align}
It is well known that ${\rm Kum}(C(\lambda_1,\lambda_2,\lambda_3))$ is given by the  double cover of the projective plane
$\mathbb{P}^2(\mathbb{C})={\rm Proj} (\mathbb{C}[\zeta_0,\zeta_1,\zeta_2])$ branched along $6$ lines
$\zeta_2=0, \zeta_0=0, \zeta_2 + 2 \zeta_1 + \zeta_0=0$ and $\zeta_2 + 2 \lambda_j \zeta_1 + \lambda_j^2 \zeta_0=0$ $(j=1,2,3)$.
Humbert's modular equations for $\Delta=5$ and $\Delta=8$
are obtained by  a study of ${\rm Kum}(C(\lambda_1,\lambda_2,\lambda_3))$.

\begin{rem}\label{KummerRem}
Let
 $\mathcal{M}_{2,2}$ be the moduli space of genus $2$ curves with level $2$ structure
and $\mathcal{Q}:\mathcal{M}_{2,2} \rightarrow \mathcal{A}_2$ be the canonical projection.
Humbert's modular equation for $\Delta$ is not a defining equation of the Humbert surface $\mathcal{H}_\Delta \subset \mathcal{A}_2$,
but defines a component of $\mathcal{Q}^{-1}(\mathcal{H}_\Delta)$. 
To the best of the author's knowledge, 
to study Humbert's modular equations  is not easy, for they have complicated forms in $\lambda_1,\lambda_2$ and $\lambda_3$.
However, Humbert's modular equation (\ref{HumbertModular}) for $\Delta=5$ and our simple modular equation (\ref{modular5}) for $\mathcal{H}_5$ are explicitly related by the formula of \cite{NaganoShiga} Theorem 8.7.
\end{rem}

Hashimoto and Murabayashi \cite{HM} studied the genus $2$ curve $C(\lambda_1,\lambda_2,\lambda_3)$,
where $(\lambda_1,\lambda_2,\lambda_3)$ satisfies Humbert's modular equations $\Delta=5$ and $\Delta=8$.
Applying  Theorem \ref{HashimotoC} (2), they obtained the following results.

\begin{prop}  (\cite{HM}, Theorem 1.3, 1.7) \label{ThmHM}

(1) Set a genus $2$ curve $C_6(s,t)$ given by
$$
C_6(s,t): Y^2 = X (X^4 -P X^3 + Q X^2 -R X +1)
$$
with
$
 P=-2(s+t),
 Q=\displaystyle \frac{(1+2t^2) (11-28 t^2 +8t^4)}{3 (1-t^2)(1-4t^2)}$ and  $R=-2 (s-t)$.
Here,
$
(s,t) 
$
satisfies
\begin{align} \label{Cover6}
4 s^2 t^2 -s^2 + t^2 +2 =0.
\end{align}
Then, ${\rm Jac}(C_6(s,t))$ is a principally polarized abelian surface with  quaternion multiplication by $\mathfrak{O}_6$.

(2)  Set a genus $2$ curve $C_{10}(s,t)$ given by
$$
C_{10}(s,t): Y^2 = X (X^4 -P X^3 + Q X^2 -R X +1)
$$
with
$
 P=\displaystyle \frac{4(2 t +1 )( t^2 -t-1)}{ (t-1)^2 }, Q=\displaystyle \frac{(1+t^2) (t^4 + 8t^3 -10 t^2 -8 t +1)}{t (t-1)^2 (t+1)^2}$
and $ R=\displaystyle  \frac{(t-1) s}{ t (t+1) (2 t +1)}$.
Here,
$
(s,t) 
$
satisfies
\begin{align} \label{Cover10}
s^2  -  t (t-2)(2 t+1) =0.
\end{align}
Then, ${\rm Jac}(C_{10}(s,t))$ is a principally polarized abelian surface with  quaternion multiplication by $\mathfrak{O}_{10}$.
\end{prop}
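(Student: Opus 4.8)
The plan is to exploit the equivalence, recorded in Definition~\ref{QMDef} and the surrounding discussion, between ``$\mathrm{Jac}(C)$ has quaternion multiplication by $\mathfrak{O}_D$'' and ``the moduli point of $\mathrm{Jac}(C)$ lies on the Shimura curve $\mathcal{S}_D\subset\mathcal{A}_2$'', and then to realize $\mathcal{S}_6$ and $\mathcal{S}_{10}$ as components of a single intersection of Humbert surfaces. By Theorem~\ref{HashimotoC}~(2) together with Example~\ref{Exap5,8}, both $\mathcal{S}_6$ and $\mathcal{S}_{10}$ are contained in $\mathcal{H}_5\cap\mathcal{H}_8$, because $5$ and $8$ are simultaneously represented by the forms $\Delta_6(m,n)$ of (\ref{D=6}) and $\Delta_{10}(m,n)$ of (\ref{D=10}). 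So the first move is to impose on the Rosenhain curve $C(\lambda_1,\lambda_2,\lambda_3)$ of (\ref{genus2}) the two conditions that simultaneously force real multiplication by $\mathcal{O}_5$ and by $\mathcal{O}_8$.

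Concretely, I would begin from Humbert's modular equations. Equation (\ref{HumbertModular}) cuts out (a component of the preimage in the $\lambda$-space of) $\mathcal{H}_5$, and the analogous Humbert equation for $\Delta=8$ recalled in \cite{HM} does the same for $\mathcal{H}_8$; solving the two simultaneously carves out a one-dimensional locus in $(\lambda_1,\lambda_2,\lambda_3)$-space. I would then move the six branch points into the reciprocal-type normal form $Y^2=X(X^4-PX^3+QX^2-RX+1)$, in which the involution $X\mapsto 1/X$ on the branch locus is manifest (this is the convenient shape for a $D=6$ or $D=10$ family, since the constant and leading coefficients agree). Reading off $P,Q,R$ as rational functions of two well-chosen parameters and eliminating should recover both the explicit $P,Q,R$ of the statement and the defining relation $4s^2t^2-s^2+t^2+2=0$ of (\ref{Cover6}) for the $D=6$ family, and likewise $s^2-t(t-2)(2t+1)=0$ of (\ref{Cover10}) for $D=10$.

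The hard part is the separation and correct labeling of the components of $\mathcal{H}_5\cap\mathcal{H}_8$. This intersection is genuinely reducible: it contains both $\mathcal{S}_6$ and $\mathcal{S}_{10}$, and in general it may carry spurious branches as well — loci where the abelian surface degenerates or splits, where the endomorphism ring is only a non-maximal order, or diagonal-type components of the kind already encountered in Theorem~\ref{ThmS6S10}. One must therefore decide which branch of the simultaneous Humbert locus is $\mathcal{S}_6$ and which is $\mathcal{S}_{10}$, and confirm that the generic member really has $\mathrm{End}_0=B_D$ rather than merely real multiplication by two quadratic fields. The cleanest way to close this gap is to compute the endomorphism algebra at a single special point of each branch — for instance a CM point whose quartic CM field forces the discriminant and pins down the maximal order $\mathfrak{O}_D$ — and then invoke that quaternion multiplication is a closed condition that is constant along the irreducible Shimura component, so verification at one point propagates to the whole curve. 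Matching the degree of each branch against Hashimoto's representation data for $\Delta_6(m,n)$ and $\Delta_{10}(m,n)$ then fixes the labeling, and the normalization above yields the stated $(s,t)$-parametrizations.
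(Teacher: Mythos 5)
This proposition is not proved in the paper at all: it is imported verbatim from Hashimoto--Murabayashi \cite{HM} (their Theorems 1.3 and 1.7), and the paper's surrounding text only records their method --- impose Humbert's modular equations for $\Delta=5$ and $\Delta=8$ on the Rosenhain model (\ref{genus2}) and apply Theorem \ref{HashimotoC} (2), using the representations $5=\Delta_6(5,-1)=\Delta_{10}(5,-1)$, $8=\Delta_6(4,-1)=\Delta_{10}(4,-1)$ of Example \ref{Exap5,8}. Up through ``solve the two Humbert equations simultaneously and normalize the branch points,'' your outline is a reconstruction of that cited argument, not a new route. (One small slip along the way: the quartic $X^4-PX^3+QX^2-RX+1$ is not palindromic, since $P\neq R$ in both families, so $X\mapsto 1/X$ is not an involution of the branch locus; it interchanges the curves with data $(P,Q,R)$ and $(R,Q,P)$.)

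The genuine gap is in the step you yourself identify as the hard part: deciding which branch of $\mathcal{H}_5\cap\mathcal{H}_8$ carries quaternion multiplication by $\mathfrak{O}_6$, which by $\mathfrak{O}_{10}$, and that the generic member has QM by a \emph{maximal} order at all. Your device --- compute ${\rm End}_0$ at one CM point of each branch and ``propagate'' along the component --- has the specialization logic backwards. In a family of abelian surfaces the endomorphism ring can only grow under specialization, so generic QM implies extra endomorphisms at special points, never conversely; a one-point computation bounds the generic endomorphism algebra from below by nothing. Worse, at a CM point of any Shimura curve one has ${\rm End}_0(A)\simeq M_2(K)$ for an imaginary quadratic field $K$, and every indefinite $B_D$ split by $K$ embeds into $M_2(K)$, so such a point lies on many Shimura curves simultaneously: its endomorphism algebra cannot ``force the discriminant,'' let alone the maximality of the order. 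The identification must instead be run in the opposite direction, which is exactly what \cite{HM} does and what the present paper does in the analogous Theorem \ref{ThmS6S10}: start from an explicit quaternion modular embedding such as (\ref{Omega6}), i.e.\ from a period matrix lying on $\mathcal{S}_6$ \emph{by construction}, map it into the moduli space, and check which component contains its image (in the paper this is Lemma \ref{lemR1} combined with the diagonal evaluation (\ref{XYDiag})). Your fallback, ``matching the degree of each branch against Hashimoto's representation data,'' is not a defined procedure: the forms (\ref{D=6}) and (\ref{D=10}) constrain which discriminants can occur inside $\mathcal{H}_5\cap\mathcal{H}_8$, but carry no information about degrees of branches in the $(\lambda_1,\lambda_2,\lambda_3)$- or $(s,t)$-coordinates. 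As written, the proposal therefore does not establish the attribution of $\mathfrak{O}_6$ to $C_6(s,t)$ and $\mathfrak{O}_{10}$ to $C_{10}(s,t)$, which is precisely the content of the statement.
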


\begin{rem}
The equation  (\ref{Cover6}) ((\ref{Cover10}), resp.) does not give the exact defining equation of the Shimura curve $\mathcal{S}_6$
($\mathcal{S}_{10}$, resp.) but defines a covering of $\mathcal{S}_6$
($\mathcal{S}_{10}$, resp.).
In fact, each curve defined by (\ref{Cover6}) and (\ref{Cover10}) is of genus $1$.
On the other hand, 
$\mathcal{S}_6$ and  $\mathcal{S}_{10}$ are genus $0$ curves.
\end{rem}

Let $j\in\{6,10\}$.
Set $\mathcal{C}_j=\{C_j(s,t)\}$ in Proposition \ref{ThmHM}.
For two members $C_j(s_1,t_1)$ and $C_j(s_2,t_2)$ of $\mathcal{C}_j$,
if ${\rm Jac}(C_j(s_1,t_1))$ and ${\rm Jac}(C_j(s_2,t_2))$ are    isomorphic as principally polarized abelian surfaces,
we call two members are equivalent.
Let $[C_j(s,t)]$ be the equivalence class of  $C_j(s,t) \in \mathcal{C}_j$.
Let $\tilde{\mathcal{C}}_j$ denote the equivalent class of $\mathcal{C}_j$.
We have the family  ${\rm Kum}(\tilde{\mathcal{C}}_j)=
\{{\rm Kum}( C_j(s,t)) | [C_j(s,t)]  \in \tilde{\mathcal{C}}_j\}
$
of Kummer surfaces.

On the other hand,
our $K3$ surface $S(\mathfrak{A}:\mathfrak{B}:\mathfrak{C}) $ in (\ref{S(ABC)}) has the Shioda-Inose structure.
Namely, there exists  an involution $\sigma$ on 
$S(\mathfrak{A}:\mathfrak{B}:\mathfrak{C}) $
such that 
the minimal resolution $K(\mathfrak{A}:\mathfrak{B}:\mathfrak{C})$ of $S(\mathfrak{A}:\mathfrak{B}:\mathfrak{C}) /\langle id ,\sigma \rangle$ is a Kummer surface.
The Kummer surface $K(\mathfrak{A}:\mathfrak{B}:\mathfrak{C})$ is given by the following equation (see \cite{NaganoKummer} Theorem 2.13),
\begin{align}\label{ourKummer}
v^2=(u^2-2t^5)(u-(5\mathfrak{A}t^2-10\mathfrak{B}t+\mathfrak{C})).
\end{align}

\begin{rem}\label{KummerPerRem}
The period mapping for the family $\mathcal{F}=\{S(\mathfrak{A}:\mathfrak{B}:\mathfrak{C}) \}$ coincides with that of  the family $\mathcal{K}=\{K(\mathfrak{A}:\mathfrak{B}:\mathfrak{C})\}$ of Kummer surfaces. 
(see \cite{NaganoKummer} Section 2.4).
\end{rem}

From Theorem \ref{ThmS6S10} and the defining equation (\ref{ourKummer}), 
 we have two families of Kummer surfaces $\mathcal{K}_j=\{K_j (\mathfrak{A}:\mathfrak{B})\}$ ($j\in \{6,10\}$) given by
 \begin{align}
 \begin{cases}
 & K_6(\mathfrak{A}:\mathfrak{B}) :  v^2=(u^2-2t^5)(u-(5\mathfrak{A}t^2-10\mathfrak{B}t-\frac{3125 \mathfrak{A}^5 - 3375 \mathfrak{A}^2 \mathfrak{B}}{243})), \\
 &K_{10} (\mathfrak{A}:\mathfrak{B})  : v^2=(u^2-2t^5)(u-(5\mathfrak{A}t^2-10\mathfrak{B}t-( \mathfrak{A}^5 - 5 \mathfrak{A}^2 \mathfrak{B})). \\
 \end{cases}
 \end{align}
 
 Considering the properties of the $K3$ surface $S(\mathfrak{A}:\mathfrak{B}:\mathfrak{C}) $,
  the above procedure of the families $\mathcal{K}_j$ for $j=6,10$,   
 Remark \ref{RemRot}, Remark \ref{KummerRem} and Remark \ref{KummerPerRem},
 we have the following proposition.

 \begin{prop}
 For $j\in \{6,10\}$, the family ${\rm Kum}(\tilde{\mathcal{C}}_j)$ coincides with the family  $\mathcal{K}_j$.
 \end{prop}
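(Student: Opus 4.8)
The plan is to prove the coincidence by showing that both families consist of exactly the Kummer surfaces attached to the principally polarized abelian surfaces with quaternion multiplication by $\mathfrak{O}_j$, and that both sweep out these over the same base, namely the Shimura curve $\mathcal{S}_j$. The proof is conceptual rather than computational: it rests on the Shioda--Inose correspondence, on Theorem \ref{ThmS6S10}, and on the moduli interpretation of $\mathcal{S}_j$.

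First I would identify the members of $\mathcal{K}_j$ geometrically. By construction $K_j(\mathfrak{A}:\mathfrak{B})$ is the restriction of the Kummer family $\mathcal{K}=\{K(\mathfrak{A}:\mathfrak{B}:\mathfrak{C})\}$ of (\ref{ourKummer}) to the curve $\Psi_5^*(\mathcal{S}_j)$, which is $R_2$ for $j=6$ and $R_1$ for $j=10$ by Theorem \ref{ThmS6S10}. The Shioda--Inose structure on $S(\mathfrak{A}:\mathfrak{B}:\mathfrak{C})$ together with Remark \ref{KummerPerRem} shows that $K(\mathfrak{A}:\mathfrak{B}:\mathfrak{C})$ is the Kummer surface of the principally polarized abelian surface $A(\mathfrak{A}:\mathfrak{B}:\mathfrak{C})$ sitting at the point $\Psi_5(\mathfrak{A}:\mathfrak{B}:\mathfrak{C})\in\mathcal{A}_2$. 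Since $(\mathfrak{A}:\mathfrak{B}:\mathfrak{C})\in\Psi_5^*(\mathcal{S}_j)$ means $\Psi_5(\mathfrak{A}:\mathfrak{B}:\mathfrak{C})\in\mathcal{S}_j$, the surface $A(\mathfrak{A}:\mathfrak{B}:\mathfrak{C})$ carries quaternion multiplication by $\mathfrak{O}_j$. Hence every member of $\mathcal{K}_j$ is the Kummer surface of such an abelian surface, and as $(\mathfrak{A}:\mathfrak{B})$ runs over $\Psi_5^*(\mathcal{S}_j)\cong\mathcal{S}_j$ we obtain all of them.

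Next I would treat ${\rm Kum}(\tilde{\mathcal{C}}_j)$. By Proposition \ref{ThmHM}, ${\rm Jac}(C_j(s,t))$ is a principally polarized abelian surface with quaternion multiplication by $\mathfrak{O}_j$, and the Hashimoto--Murabayashi curves cover $\mathcal{S}_j$; passing to the equivalence classes $\tilde{\mathcal{C}}_j$ identifies members with isomorphic Jacobians, so ${\rm Kum}(\tilde{\mathcal{C}}_j)$ is precisely the family of Kummer surfaces ${\rm Kum}({\rm Jac}(C_j(s,t)))$ of these abelian surfaces. Because the forgetful map $\varphi_j$ is unique for $j=6,10$ (Remark \ref{RemRot}), the Shimura curve $\mathcal{S}_j$ is unambiguous and both $\tilde{\mathcal{C}}_j$ and $\Psi_5^*(\mathcal{S}_j)$ are parametrizations of the same set of isomorphism classes of principally polarized abelian surfaces with quaternion multiplication by $\mathfrak{O}_j$.

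Finally I would conclude by noting that the assignment sending a principally polarized abelian surface to its Kummer surface descends to a well-defined map on isomorphism classes, so that matching points of $\tilde{\mathcal{C}}_j$ and $\Psi_5^*(\mathcal{S}_j)$ are sent to isomorphic Kummer surfaces. As both $\mathcal{K}_j$ and ${\rm Kum}(\tilde{\mathcal{C}}_j)$ realize the image of this assignment over the whole of $\mathcal{S}_j$, the two families coincide. The main obstacle is the bookkeeping of this correspondence between the two parameter spaces: I must be sure that the $(s,t)$-parametrization of Hashimoto--Murabayashi and the $(\mathfrak{A}:\mathfrak{B})$-parametrization attach the same abelian surface to matching points of $\mathcal{S}_j$, and that no spurious polarization intervenes. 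This is exactly where the uniqueness in Remark \ref{RemRot} and the translation between Humbert's modular equation and the simple equation (\ref{modular5}) recorded in Remark \ref{KummerRem} are needed.
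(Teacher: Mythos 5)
Your proposal is correct and takes essentially the same approach as the paper: the paper's own justification of this proposition is exactly an appeal to the Shioda--Inose structure of $S(\mathfrak{A}:\mathfrak{B}:\mathfrak{C})$, the construction of $\mathcal{K}_j$ by restricting the Kummer family (\ref{ourKummer}) to the loci $R_2$ and $R_1$ identified in Theorem \ref{ThmS6S10}, together with Remarks \ref{RemRot}, \ref{KummerRem} and \ref{KummerPerRem}---precisely the ingredients you assemble. Your write-up simply makes explicit the moduli-theoretic bookkeeping (both families realize the Kummer surfaces of the principally polarized abelian surfaces with quaternion multiplication by $\mathfrak{O}_j$ over the same curve $\mathcal{S}_j$, with uniqueness of $\varphi_j$ removing any ambiguity) that the paper leaves implicit in its one-sentence argument.
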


\section{The Shimura curves of discriminant $14$  and $15 $ in $\mathbb{P}(1:3:5)$}

In this section, we obtain the explicit forms
of the Shimura curves for discriminants $14$ and $15$
 in the weighted projective space ${\rm Proj}(\mathbb{C}[\mathfrak{A}:\mathfrak{B}:\mathfrak{C}])$.
 
However, as in Remark \ref{RemRot},
 the Shimura curve
 $\mathcal{S}_D= \varphi_D (\mathbb{S}_D) \subset \mathcal{A}_2$ 
 is not unique 
 for $D=14$
  because
  there exist two choices of $\varphi_D$. 
So, 
the image $\mathcal{S}_{14}$ of  the  Shimura curve depends on the triples $(p,a,b)$ in  the argument of Section 2.4.

 In this section, as in Example \ref{Exap5,12} and \ref{Exap5,21},
we only consider the Shimura curve $\mathcal{S}_{14}$  
 in $\mathcal{A}_2$ coming form the triple $(p,a,b)= (5,1,3)$.

\begin{thm}\label{Thm5,12}
The pull-back $\Psi_5^*(\mathcal{H}_5 \cap \mathcal{H}_{12})$ is given by the union of four devisors $R_2, R_3, R_4, L_2$,
where
$R_2$ is given by (\ref{R1R2R3}) and $R_3,R_4$ and $L_2$ are curves in $\mathbb{P}(1:3:5)={\rm Proj}(\mathbb{C}[ \mathfrak{A}, \mathfrak{B}, \mathfrak{C}])$ in the following:
\begin{align*}
&R_3:1048576 \mathfrak{A}^{10} - 30965760 \mathfrak{A}^7 \mathfrak{B} - 144633600 \mathfrak{A}^4 \mathfrak{B}^2 \notag \\
& \quad\quad- 157464000 \mathfrak{A} \mathfrak{B}^3 + 72721152 \mathfrak{A}^5 \mathfrak{C} - 27293760 \mathfrak{A}^2 \mathfrak{B} \mathfrak{C} - 
  59049 \mathfrak{C}^2=0,
\end{align*}
\begin{align*}
&R_4:30517578125 \mathfrak{A}^{15} + 911865234375 \mathfrak{A}^{12} \mathfrak{B} + 42529296875 \mathfrak{A}^9 \mathfrak{B}^2 - 
  97897974609375 \mathfrak{A}^6 \mathfrak{B}^3 \notag \\
  & \quad \quad + 424490000000000 \mathfrak{A}^3 \mathfrak{B}^4 - 
  345600000000000 \mathfrak{B}^5 + 2383486328125 \mathfrak{A}^10 \mathfrak{C} \notag \\
&\quad \quad + 
  32875975781250 \mathfrak{A}^7 \mathfrak{B} \mathfrak{C} - 147816767984375 \mathfrak{A}^4 \mathfrak{B}^2 \mathfrak{C} + 
  228155760000000 \mathfrak{A} \mathfrak{B}^3 \mathfrak{C} \notag \\
  &\quad \quad + 19189204671875 \mathfrak{A}^5 \mathfrak{C}^2 - 
  29675018141125 \mathfrak{A}^2 \mathfrak{B} \mathfrak{C}^2 + 344730881243 \mathfrak{C}^3=0,
\end{align*}
{\small \begin{align*}
& L_2:   -64000000000000000000 \mathfrak{A}^{12} \mathfrak{B}^6 + 370000000000000000000 \mathfrak{A}^9 \mathfrak{B}^7 + 
  815234375000000000000 \mathfrak{A}^6 \mathfrak{B}^8    \notag \\
 & \quad - 3902343750000000000000 \mathfrak{A}^3 \mathfrak{B}^9 - 7119140625000000000000 \mathfrak{B}^10 + 
  38400000000000000000 \mathfrak{A}^{13} \mathfrak{B}^4 \mathfrak{C} \notag \\
& \quad  -  223920000000000000000 \mathfrak{A}^{10} \mathfrak{B}^5 \mathfrak{C} - 
  1075967500000000000000 \mathfrak{A}^7 \mathfrak{B}^6 \mathfrak{C} + 
  3969323437500000000000 \mathfrak{A}^4 \mathfrak{B}^7 \mathfrak{C} \notag \\
&\quad  + 
  4702429687500000000000 \mathfrak{A} \mathfrak{B}^8 \mathfrak{C} - 
  7680000000000000000 \mathfrak{A}^{14} \mathfrak{B}^2 \mathfrak{C}^2 + 
  45577600000000000000 \mathfrak{A}^{11} \mathfrak{B}^3 \mathfrak{C}^2 \notag\\
  &\quad + 
  449730698000000000000 \mathfrak{A}^8 \mathfrak{B}^4 \mathfrak{C}^2 - 
  1463038602500000000000 \mathfrak{A}^5 \mathfrak{B}^5 \mathfrak{C}^2 - 
  1122863301562500000000 \mathfrak{A}^2 \mathfrak{B}^6 \mathfrak{C}^2 \notag\\
 &\quad + 
 512000000000000000 \mathfrak{A}^{15} \mathfrak{C}^3 - 3077760000000000000 \mathfrak{A}^{12} \mathfrak{B} \mathfrak{C}^3 - 
  77561010400000000000 \mathfrak{A}^9 \mathfrak{B}^2 \mathfrak{C}^3   \notag \\ 
 &\quad + 
  235959322740000000000 \mathfrak{A}^6 \mathfrak{B}^3 \mathfrak{C}^3 + 
  121351323118750000000 \mathfrak{A}^3 \mathfrak{B}^4 \mathfrak{C}^3 + 
  13523702118750000000 \mathfrak{B}^5 \mathfrak{C}^3  \notag \\
& \quad   + 4779900760000000000 \mathfrak{A}^{10} \mathfrak{C}^4 - 
  13908191752800000000 \mathfrak{A}^7 \mathfrak{B} \mathfrak{C}^4 - 
  8326918293212000000 \mathfrak{A}^4 \mathfrak{B}^2 \mathfrak{C}^4 \notag \\ 
&\quad  -  2530877087227500000 \mathfrak{A} \mathfrak{B}^3 \mathfrak{C}^4 + 449415539646800000 \mathfrak{A}^5 \mathfrak{C}^5 \notag \\
& \quad + 
  103922033314060000 \mathfrak{A}^2 \mathfrak{B} \mathfrak{C}^5 + 50787635527751 \mathfrak{C}^6=0.
\end{align*}}
\end{thm}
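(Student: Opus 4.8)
The plan is to run exactly the elimination argument from the proof of Theorem \ref{ShimuraModular}, with the parametrization $\chi_8$ of $\mathcal{H}_8$ replaced by the parametrization $\chi_{12}$ of $\mathcal{H}_{12}$ recorded in (\ref{parameter12}). Since $\Psi_5$ is an embedding with $\mathrm{Im}(\Psi_5)=\mathcal{H}_5$, the pull-back $\Psi_5^*(\mathcal{H}_5\cap\mathcal{H}_{12})$ equals $\Psi_5^{-1}(\mathcal{H}_{12})$, so I only need to decide for which $(\mathfrak{A}:\mathfrak{B}:\mathfrak{C})\in\mathbb{P}(1:3:5)$ the point $\Psi_5(\mathfrak{A}:\mathfrak{B}:\mathfrak{C})$ lies on the image of $\chi_{12}$.

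First I would set up the coincidence equations in $\mathbb{P}(2:3:5:6)$. Because $\chi_{12}$ is defined on $\mathbb{P}(2:1:1)$ and lands in $\mathbb{P}(2:3:5:6)=\mathbb{P}(4:6:10:12)$, I grade the ring $\mathbb{C}[\mathfrak{A},\mathfrak{B},\mathfrak{C},e,f,g]$ by the weights $(2,6,10,2,1,1)$, using the identification $\mathbb{P}(1:3:5)=\mathbb{P}(2:6:10)$; then the four differences
\begin{align*}
F_1=\alpha_5-\alpha_{12},\quad F_2=\beta_5-\beta_{12},\quad F_3=\gamma_5-\gamma_{12},\quad F_4=\delta_5-\delta_{12},
\end{align*}
formed from (\ref{(ABCDXY)}) and (\ref{parameter12}), become weighted homogeneous of weights $4,6,10,12$. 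As in Theorem \ref{ShimuraModular}, the freedom to rescale $(\mathfrak{A}:\mathfrak{B}:\mathfrak{C})$ and $(e:f:g)$ turns any projective coincidence $\Psi_5(\mathfrak{A}:\mathfrak{B}:\mathfrak{C})=\chi_{12}(e:f:g)$ into an honest equality of cone representatives (choosing the rescaling parameter $\mu$ with $\mu^2$ equal to the inverse of the projective scaling factor), so $(\mathfrak{A}:\mathfrak{B}:\mathfrak{C})\in\Psi_5^{-1}(\mathcal{H}_{12})$ if and only if it is the projection of a point of $V(I)$ for the ideal $I=\langle F_1,F_2,F_3,F_4\rangle$.

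Next I would eliminate the auxiliary coordinates. Computing a Gr\"obner basis of $I$ for an elimination order that removes $e,f,g$ yields the elimination ideal $I_S=I\cap\mathbb{C}[\mathfrak{A},\mathfrak{B},\mathfrak{C}]$, whose zero set is the Zariski closure of the projection of $V(I)$ to $\mathbb{P}(1:3:5)$, that is, $\Psi_5^*(\mathcal{H}_5\cap\mathcal{H}_{12})$. I expect $I_S$ to be principal, as in Theorem \ref{ShimuraModular}, and I would then factor its generator over $\mathbb{Q}$ to exhibit it as a constant multiple of $R_2\cdot R_3\cdot R_4\cdot L_2$; the weights $5,10,15,30$ of these factors sum to $60$, which should be the weight of the generator, so once the factorization is produced there is nothing left to match.

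The hard part will be these last two steps. The ideal sits in six variables carrying high weights, and the sought generator has weight $60$, so both the Gr\"obner basis elimination and the subsequent multivariate factorization are only feasible with computer algebra (as was already the case for $\Delta=8$ in Theorem \ref{ShimuraModular}); I would carry them out in Mathematica along the lines of \cite{CLO}. The one genuinely delicate point is to certify that the generator splits precisely into the four stated polynomials and that these give distinct components; I would verify this by passing to the affine chart $\mathfrak{A}\neq 0$ with the coordinates $(X,Y)$ of (\ref{XY}) and checking the factorization together with the irreducibility of $R_3$, $R_4$ and $L_2$ there.
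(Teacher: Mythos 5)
Your proposal is correct and follows essentially the same route as the paper: form the ideal $I_{12}=\langle F_1^{(5,12)},\dots,F_4^{(5,12)}\rangle$ from the differences of $\Psi_5$ in (\ref{(ABCDXY)}) and $\chi_{12}$ in (\ref{parameter12}), eliminate $e,f,g$ by a Gr\"obner basis computation, and identify the zero set of $I_{12,S}=I_{12}\cap\mathbb{C}[\mathfrak{A},\mathfrak{B},\mathfrak{C}]$ with the four components $R_2,R_3,R_4,L_2$. The extra care you take with the weighted grading (doubling to $\mathbb{P}(2:6:10)$ so the $F_j$ are homogeneous of weights $4,6,10,12$) and with certifying the factorization in the affine $(X,Y)$-chart are refinements of points the paper leaves implicit, not a different method.
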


\begin{proof}
Recalling (\ref{(ABCDXY)}) and (\ref{parameter12}),  set
\begin{align*}
\begin{cases}
& F_1^{(5,12)}(\mathfrak{A},\mathfrak{B},\mathfrak{C},e,f,g)=\alpha_5(\mathfrak{A},\mathfrak{B},\mathfrak{C})-\alpha_{12}(e,f,g),\\
& F_2^{(5,12)} (\mathfrak{A},\mathfrak{B},\mathfrak{C},e,f,g)=\beta_5(\mathfrak{A},\mathfrak{B},\mathfrak{C})-\beta_{12}(e,f,g),\\
&F_3^{(5,12)}(\mathfrak{A},\mathfrak{B},\mathfrak{C},e,f,g)=\gamma_5(\mathfrak{A},\mathfrak{B},\mathfrak{C})-\gamma_{12}(e,f,g),\\
&F_4^{(5,12)}(\mathfrak{A},\mathfrak{B},\mathfrak{C},e,f,g)=\delta_5(\mathfrak{A},\mathfrak{B},\mathfrak{C})-\delta_{12}(e,f,g)\\
\end{cases}
\end{align*}
and take the weighted homogeneous  ideal
$
I_{12}=\langle F_1^{(5,12)},F_2^{(5,12)}, F_3^{(5,12)},F_4^{(5,12)}  \rangle 
$
of the ring $\mathbb{C}[\mathfrak{A},\mathfrak{B},\mathfrak{C},e,f,g]$.
As in the proof of Theorem \ref{ShimuraModular},
the zero set of the ideal $I_{12,S}=I_{12}\cap \mathbb{C}[\mathfrak{A},\mathfrak{B},\mathfrak{C}]$
corresponds to the pull-back $\Psi_5^* (\mathcal{H}_5 \cap \mathcal{H}_{12})$.
By a computer aided calculation, we can show that the zero set of $I_{12,S}$ is the union of the curves $R_2,R_3,R_4$ and $L_2$.
\end{proof}

\begin{thm}\label{ThmS14S15}
The Shimura curve $\Psi_5^*(\mathcal{S}_{15})$  corresponds to $R_3$ and  
the Shimura curve $\Psi_{5}^*(\mathcal{S}_{14})$  corresponds to $R_4$.
\end{thm}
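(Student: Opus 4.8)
The plan is to pin down $\Psi_5^*(\mathcal{S}_{14})$ and $\Psi_5^*(\mathcal{S}_{15})$ among the four components produced by Theorem \ref{Thm5,12}, and then to separate $R_3$ from $R_4$ by means of a \emph{second} Humbert intersection. By Example \ref{Exap5,12}, both $\mathcal{S}_{14}$ (attached to $(p,a,b)=(5,1,3)$) and $\mathcal{S}_{15}$ lie in $\mathcal{H}_5\cap\mathcal{H}_{12}$. Since $\Psi_5$ is an embedding of varieties and these curves already lie in $\mathcal{H}_5=\mathrm{Im}(\Psi_5)$, each pull-back $\Psi_5^*(\mathcal{S}_{14})$, $\Psi_5^*(\mathcal{S}_{15})$ is an irreducible curve contained in $\Psi_5^*(\mathcal{H}_5\cap\mathcal{H}_{12})=R_2\cup R_3\cup R_4\cup L_2$, hence must coincide with one of the irreducible components $R_2,R_3,R_4,L_2$.

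First I would discard $R_2$ and $L_2$, following the pattern of Theorem \ref{ThmS6S10} and Lemma \ref{R3lem}. We have $R_2=\Psi_5^*(\mathcal{S}_6)$ by Theorem \ref{ThmS6S10}, and $\mathcal{S}_6,\mathcal{S}_{14},\mathcal{S}_{15}$ are pairwise distinct curves in $\mathcal{A}_2$ (they parametrize abelian surfaces with quaternion multiplication of distinct discriminants), so neither wanted pull-back equals $R_2$. Next, every monomial of $L_2$ carries a factor $\mathfrak{B}$ or $\mathfrak{C}$, so $L_2$ vanishes at $(\mathfrak{A}:\mathfrak{B}:\mathfrak{C})=(1:0:0)$; this is the unique cusp of Proposition \ref{KleinH}(2), which a compact Shimura curve cannot meet, exactly as in Lemma \ref{R3lem}. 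By contrast $R_3$ and $R_4$ have nonzero pure $\mathfrak{A}$-coefficients ($1048576$ and $30517578125$ respectively), so they avoid the cusp. Thus $L_2$ is spurious and $\{\Psi_5^*(\mathcal{S}_{14}),\Psi_5^*(\mathcal{S}_{15})\}=\{R_3,R_4\}$.

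To decide which component is which, I would invoke Example \ref{Exap5,21}: the curve $\mathcal{S}_{14}$ for $(5,1,3)$ lies in $\mathcal{H}_5\cap\mathcal{H}_{21}$, whereas $\mathcal{S}_{15}$ does not, since $21$ is not represented by $\Delta_{15}(m,n)$ (Theorem \ref{HashimotoC}(2)). The key computation is therefore the pull-back $\Psi_5^*(\mathcal{H}_5\cap\mathcal{H}_{21})$, obtained by the same elimination-ideal method as in Theorems \ref{ShimuraModular} and \ref{Thm5,12}: using $\Psi_5$ from (\ref{(ABCDXY)}) and $\chi_{21}$ from (\ref{ABCD21}), form the weighted homogeneous ideal generated by $\alpha_5-\alpha_{21},\beta_5-\beta_{21},\gamma_5-\gamma_{21},\delta_5-\delta_{21}$ in $\mathbb{C}[\mathfrak{A},\mathfrak{B},\mathfrak{C},q_1,r_1,s_1]$ and eliminate $q_1,r_1,s_1$ by a Gr\"obner basis. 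I expect $R_4$ to appear as a factor of the resulting defining equation while $R_3$ does not. Since $\Psi_5^*(\mathcal{S}_{14})$ must be a component of $\Psi_5^*(\mathcal{H}_5\cap\mathcal{H}_{21})$ and $\Psi_5^*(\mathcal{S}_{15})$ cannot be, this forces $R_4=\Psi_5^*(\mathcal{S}_{14})$ and $R_3=\Psi_5^*(\mathcal{S}_{15})$, which is the assertion.

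The conceptual content is light; the genuine difficulty will be purely computational. Because $\chi_{21}$ is built from high-degree polynomials in $(q_1,r_1,s_1)$, the elimination yielding $\Psi_5^*(\mathcal{H}_5\cap\mathcal{H}_{21})$ is a substantial Gr\"obner basis calculation (to be carried out by computer, as in the earlier theorems), and its output will again contain extraneous factors that have to be separated from the genuine Shimura component $R_4$ by the cusp test used above. One must also keep the bookkeeping straight: the image $\mathcal{S}_{14}$ is well defined only after fixing the forgetful map $\varphi_{14}$, and the argument is coherent only because the single choice $(p,a,b)=(5,1,3)$ underlies both Example \ref{Exap5,12} and Example \ref{Exap5,21}.
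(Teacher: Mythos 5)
Your argument is correct and follows the paper's proof almost line for line: the same four-component decomposition from Theorem \ref{Thm5,12}, the same elimination of $R_2$ (as $\Psi_5^*(\mathcal{S}_6)$, by Theorem \ref{ThmS6S10}) and of $L_2$ (by the cusp test of Lemma \ref{R3lem} and Proposition \ref{KleinH}(2)), and the same use of Examples \ref{Exap5,12} and \ref{Exap5,21} to separate $R_3$ from $R_4$ through the second intersection $\mathcal{H}_5\cap\mathcal{H}_{21}$. The one place where you diverge---and where your plan would likely founder in practice---is the key computation. You propose to eliminate $q_1,r_1,s_1$ directly from the ideal $I_{21}=\langle F_1^{(5,21)},F_2^{(5,21)},F_3^{(5,21)},F_4^{(5,21)}\rangle$ and read off the full defining equation of $\Psi_5^*(\mathcal{H}_5\cap\mathcal{H}_{21})$; the paper states explicitly that this direct Gr\"obner basis computation of $I_{21,S}=I_{21}\cap\mathbb{C}[\mathfrak{A},\mathfrak{B},\mathfrak{C}]$ involves too large a calculation to carry out. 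The paper's workaround is to eliminate in the opposite direction first, computing $I_{21,T}=I_{21}\cap\mathbb{C}[q_1,r_1,s_1]$, which turns out to be principal with a generator that factors; one then adjoins a single factor, $q_1^3-q_1s_1-r_1s_1$, to form the enlarged ideal $J_{21}$, for which the elimination of $q_1,r_1,s_1$ becomes tractable, and checks that $R_4$ lies in the zero set of $J_{21,S}=J_{21}\cap\mathbb{C}[\mathfrak{A},\mathfrak{B},\mathfrak{C}]$, hence is a component of $\Psi_5^*(\mathcal{H}_5\cap\mathcal{H}_{21})$. Note also that only this positive containment is needed: since one of $R_3,R_4$ must equal $\Psi_5^*(\mathcal{S}_{15})$, which by Example \ref{Exap5,21} cannot lie in $\Psi_5^*(\mathcal{H}_5\cap\mathcal{H}_{21})$, exhibiting $R_4$ as a component already forces $R_4=\Psi_5^*(\mathcal{S}_{14})$ and $R_3=\Psi_5^*(\mathcal{S}_{15})$; your additional check that $R_3$ does not appear as a factor is logically superfluous and, given the infeasibility of the full elimination, not readily available anyway.
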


\begin{proof}
First, according to Theorem \ref{Thm5,12},  
the divisor $\Psi_5^* (\mathcal{H}_5 \cap \mathcal{H}_{12})$ 
consists of only four irreducible components $R_2, R_3, R_4$ and $L_2$.
However, from Theorem \ref{ThmS6S10},
 the curve $R_2 $ is  the Shimura curve  $\Psi_5^*(\mathcal{S}_6)$. 
 Moreover, since the curve $L_2$ passes the cusp  $(\mathfrak{A}:\mathfrak{B}:\mathfrak{C})=(1:0:0)$, 
 the curve $L_2$ does not corresponds to any Shimura curves.

 Then, we shall identify  the curves $R_3$ and $R_4$.
 Because we have Example \ref{Exap5,12}, 
 the divisor $\Psi_5^*(\mathcal{H}_5\cap \mathcal{H}_{12})$ contains the Shimura curves $\Psi_5^*(\mathcal{S}_{15})$ and $\Psi_5^*(\mathcal{S}_{14})$.
 So, 
 of the two curves $R_3$ and $R_4$,
 one corresponds to 
  $\Psi_5^*(\mathcal{S}_{15})$ and the other  corresponds to $\Psi_5^*(\mathcal{S}_{14})$.
 By the way,
 according to Example \ref{Exap5,21}, only the Shimura curve $\Psi_5^* (\mathcal{S}_{14})$ is contained in the divisor $\Psi_5^*(\mathcal{H}_5\cap \mathcal{H}_{21})$.  
 Moreover, due to the next lemma, 
the curve $R_4$ is an irreducible component  of  $\Psi_5^*(\mathcal{H}_5 \cap \mathcal{H}_{21})$.
Therefore, we conclude that the curve $R_3$ ($R_4$, resp.) gives the explicit model of the Shimura curve $\Psi_5^*(\mathcal{S}_{15})$, ($\Psi_5^*(\mathcal{S}_{14})$, resp.). 
\end{proof}

\begin{lem}
The curve $R_4$ is a irreducible component of the divisor $\Psi_5^* (\mathcal{H}_5 \cap \mathcal{H}_{21})$.
\end{lem}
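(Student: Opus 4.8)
The plan is to run the same elimination scheme used in the proofs of Theorem \ref{ShimuraModular} and Theorem \ref{Thm5,12}, now pairing the embedding $\Psi_5$ of (\ref{(ABCDXY)}) with the parametrization $\chi_{21}$ of $\mathcal{H}_{21}$ from (\ref{ABCD21}). Concretely, I would introduce the four weighted-homogeneous polynomials $F_1^{(5,21)}(\mathfrak{A},\mathfrak{B},\mathfrak{C},q_1,r_1,s_1)=\alpha_5(\mathfrak{A},\mathfrak{B},\mathfrak{C})-\alpha_{21}(q_1,r_1,s_1)$ and likewise $F_2^{(5,21)},F_3^{(5,21)},F_4^{(5,21)}$ formed from the $\beta,\gamma,\delta$ coordinates, take the ideal $I_{21}=\langle F_1^{(5,21)},F_2^{(5,21)},F_3^{(5,21)},F_4^{(5,21)}\rangle$ in $\mathbb{C}[\mathfrak{A},\mathfrak{B},\mathfrak{C},q_1,r_1,s_1]$, and compute the elimination ideal $I_{21,S}=I_{21}\cap\mathbb{C}[\mathfrak{A},\mathfrak{B},\mathfrak{C}]$ by a Gr\"obner basis. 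Exactly as before, since $\Psi_5$ parametrizes $\mathcal{H}_5$ and $\chi_{21}$ parametrizes $\mathcal{H}_{21}$, the zero set $V(I_{21,S})$ is the Zariski closure of the pull-back $\Psi_5^*(\mathcal{H}_5\cap\mathcal{H}_{21})$.

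Once a generator of $I_{21,S}$ is in hand, the lemma reduces to a divisibility check: it suffices to verify that the polynomial defining $R_4$ divides this generator. Since $R_4$ is one of the irreducible curves already produced in Theorem \ref{Thm5,12}, divisibility immediately exhibits it as an irreducible component of $\Psi_5^*(\mathcal{H}_5\cap\mathcal{H}_{21})$. I expect the resulting factorization to contain $R_4$ but not $R_3$; this asymmetry is precisely the arithmetic content of Example \ref{Exap5,21}, where $\mathcal{S}_{14}$ is contained in $\mathcal{H}_{21}$ while $\mathcal{S}_{15}$ is not, and it is exactly what Theorem \ref{ThmS14S15} invokes to separate the two curves.

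The main obstacle is computational rather than conceptual: the coordinates $\alpha_{21},\dots,\delta_{21}$ in (\ref{ABCD21}) have degree up to $6$ in $(q_1,r_1,s_1)$, so the elimination is considerably heavier than in the discriminant $8$ and $12$ cases and the intermediate Gr\"obner basis can be unwieldy. To keep it tractable I would eliminate the three parameters one at a time by iterated resultants, then discard the spurious factors coming from the base locus of $\chi_{21}$ and from the simultaneous vanishing of leading coefficients---typically by checking that such factors pass through the cusp $(\mathfrak{A}:\mathfrak{B}:\mathfrak{C})=(1:0:0)$ or correspond to degenerate fibrations, hence cannot be the genuine component. As a final safeguard I would confirm that $R_4$ really lies in the pull-back, and is not an artifact of the resultant step, by taking a generic point of $R_4$, solving the system $\Psi_5=\chi_{21}$ for $(q_1:r_1:s_1)\in\mathbb{P}(1:1:2)$, and exhibiting an actual preimage.
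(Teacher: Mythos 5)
Your setup (the polynomials $F_i^{(5,21)}$, the ideal $I_{21}\subset\mathbb{C}[\mathfrak{A},\mathfrak{B},\mathfrak{C},q_1,r_1,s_1]$, and the identification of $V(I_{21,S})$ with the pull-back) is exactly the paper's, but your primary route then rests on precisely the computation the paper reports as prohibitively difficult: the direct Gr\"obner-basis elimination producing $I_{21,S}=I_{21}\cap\mathbb{C}[\mathfrak{A},\mathfrak{B},\mathfrak{C}]$. Your proposed mitigation---iterated resultants in $q_1,r_1,s_1$---is not a real workaround: successive resultants on polynomials of these weighted degrees typically blow up at least as badly as a Gr\"obner elimination and generate large extraneous factors that must themselves be identified and factored, so you hit the same wall. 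The paper's actual proof avoids the direct elimination altogether by a two-step device absent from your proposal: it eliminates in the \emph{opposite} direction, computing $I_{21,T}=I_{21}\cap\mathbb{C}[q_1,r_1,s_1]$, which is feasible and turns out to be principal with the factored generator (\ref{genersq}); it then adjoins the single factor $q_1^3-q_1s_1-r_1s_1$ to form the augmented ideal $J_{21}$, whose zero set is small enough that the elimination $J_{21,S}=J_{21}\cap\mathbb{C}[\mathfrak{A},\mathfrak{B},\mathfrak{C}]$ becomes computable; finally it checks that $R_4$ is contained in $V(J_{21,S})$, hence in $\Psi_5^*(\mathcal{H}_5\cap\mathcal{H}_{21})$, which (as $R_4$ is an irreducible curve and the pull-back is one-dimensional) makes $R_4$ a component. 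Note also that the paper never obtains, and never needs, the full decomposition of $\Psi_5^*(\mathcal{H}_5\cap\mathcal{H}_{21})$ that your divisibility test presupposes; it only needs the one inclusion.

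Your ``final safeguard'' is in fact the germ of a valid standalone argument, and it is much closer in spirit to the paper's method than your main route: if the system $\Psi_5=\chi_{21}$ is solvable over the function field $\mathbb{C}(R_4)$ (equivalently, for a Zariski-dense set of points of $R_4$), then $R_4$ lies in the closure of $\Psi_5^*(\mathcal{H}_5\cap\mathcal{H}_{21})$ and is therefore an irreducible component; the paper's factor $q_1^3-q_1s_1-r_1s_1$ is exactly such a preimage locus in parameter space. But as stated your check is insufficient: verifying a preimage at one ``generic'' closed point of $R_4$ only shows that this single point lies in the pull-back, and a curve distinct from every component of the pull-back can still meet it in finitely many points. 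To close the gap you would need to promote this safeguard to the main argument and carry it out generically over $\mathbb{C}(R_4)$, rather than leave it as a one-point confirmation appended to an elimination that cannot be completed.
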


\begin{proof}
Using the notation in (\ref{(ABCDXY)}) and (\ref{ABCD21}),
we set
\begin{align*}
\begin{cases}
&F_1^{(5,21)}(\mathfrak{A},\mathfrak{B},\mathfrak{C},q_1,r_1,s_1)= \alpha_5(\mathfrak{A},\mathfrak{B},\mathfrak{C}) - \alpha_{21}(q_1,r_1,s_1),\\
&F_2^{(5,21)}(\mathfrak{A},\mathfrak{B},\mathfrak{C},q_1,r_1,s_1)= \beta_5(\mathfrak{A},\mathfrak{B},\mathfrak{C}) - \beta_{21}(q_1,r_1,s_1),\\
&F_3^{(5,21)}(\mathfrak{A},\mathfrak{B},\mathfrak{C},q_1,r_1,s_1)=  \gamma_5(\mathfrak{A},\mathfrak{B},\mathfrak{C}) - \gamma_{21}(q_1,r_1,s_1),\\
&F_4^{(5,21)}(\mathfrak{A},\mathfrak{B},\mathfrak{C},q_1,r_1,s_1)=\delta_5(\mathfrak{A},\mathfrak{B},\mathfrak{C}) - \delta_{21}(q_1,r_1,s_1).
\end{cases}
\end{align*}
We take the weighted homogeneous  ideal
$$
I_{21}=\langle  F_1^{(5,21)},F_2^{(5,21)},F_3^{(5,21)},F_4^{(5,21)}\rangle
$$
in the ring $\mathbb{C}[\mathfrak{A},\mathfrak{B},\mathfrak{C},q_1,r_1,s_1]$.
The zero set  of the ideal 
$$
I_{21,S}=I_{21} \cap \mathbb{C}[\mathfrak{A},\mathfrak{B},\mathfrak{C}]
$$ 
gives the curve $\Psi_5^*( \mathcal{H}_5 \cap \mathcal{H}_{21})$.
 However, 
 because of a huge amount of calculations of the Gr\"obner basis,
 it is very difficult to obtain a system of generators of the ideal $I_{21,S}$ directly. 

Instead, 
we shall consider  the ideal
$$
  I_{21,T}=I_{21} \cap \mathbb{C}[q_1,r_1,s_1].
$$
By a computer aided calculation, we can show that the ideal $I_{21,T}$ is a principal ideal generated by
\begin{align}\label{genersq}
&q_1 (2 q_1 + r_1) s_1^2 (q_1^3 - q_1 s_1 - r_1 s_1) \notag\\
&\times (2 q_1^4 + q_1^3 r_1 - 11 q_1^2 s_1 - 
   22 q_1 r_1 s_1 - 8 r_1^2 s_1 + 9 s_1^2) \times (2 q_1^4 - 27 q_1^2 s_1 - 27 q_1 r_1 s_1 + 
   81 s_1^2) \notag\\
  & \times (q_1^6 - 20 q_1^4 s_1 - 9 q_1^3 r_1 s_1 + 98 q_1^2 s_1^2 + 62 q_1 r_1 s_1^2 + 
   8 r_1^2 s_1^2 + 9 s_1^3).
\end{align}

Next, 
taking a factor $q_1^3 - q_1 s_1 - r_1 s_1$ of (\ref{genersq}),
we consider the new ideal 
$$
J_{21}=\langle  F_1^{(5,21)},F_2^{(5,21)},F_3^{(5,21)},F_4^{(5,21)},  q_1^3 - q_1 s_1 - r_1 s_1 \rangle.
$$
The zero set $V(J_{21,S})$ of the elimination ideal 
$$
J_{21,S}=  J_{21} \cap \mathbb{C}[\mathfrak{A},\mathfrak{B},\mathfrak{C}]
$$
corresponds to certain components of $\Psi_5^*(\mathcal{H}_5 \cap \mathcal{H}_{21})$.
By a computer aided calculation, 
we can compute the Gr\"obner basis of the ideal $J_{21,S}$.
So, we can check that the curve $R_4$ is contained in the zero set of the ideal $J_{21,S}$.
\end{proof}

The author believes that 
the method using icosahedral invariants and the theta expression (\ref{Ourtheta})
is effective for the Shimura curves $\mathcal{S}_D$ for $D>15$
with some skillful computations of Gr\"obner basis.

\section*{Acknowledgment}
The author would like to thank Professor Hironori Shiga for helpful advises and  valuable suggestions,
and also to Professor Kimio Ueno  for kind encouragements.
He is grateful to the referee for careful comments. 
This work is supported by 
The JSPS Program for Advancing Strategic International Networks to Accelerate the Circulation of Talented Researchers
"Mathematical Science of Symmetry, Topology and Moduli, Evolution of International Research Network based on OCAMI",
The Sumitomo Foundation Grant for Basic Science Research Project (No.150108) 
and
Waseda University Grant for Special Research Project (2014B-169 and 2015B-191).

\begin{center}
\hspace{7.7cm}\textit{Atsuhira  Nagano}\\
\hspace{7cm}\textit{Department of Mathematics}\\
\hspace{7.7cm}\textit{King's College London}\\
\hspace{7.7cm}\textit{Strand, London, WC2R 2LS}\\
\hspace{7.7cm}\textit{United Kingdom}\\
 \hspace{7.7cm}\textit{(E-mail: atsuhira.nagano@gmail.com)}
  \end{center}


\begin{thebibliography}{20}


\bibitem[AKMMMP]{AKMMMP} S. Y. An,  S. Y. Kim,  D. C. Marshall, S.H. Marshall, W. G. McCallum and A. R. Perlis,
\newblock{\em Jacobians of Genus One Curves,}
\newblock{J. of Number Theory, {\bf 90} (2), 304-315.}


\bibitem[B]{Besser} A. Besser,
\newblock{\em Elliptic fibrations of $K3$ surfaces and QM Kummer surfaces,}
\newblock{Math. Z., {\bf 228} (2),  1998, 283-308.}


\bibitem[BG]{BG} M. A. Bonfanti and B. van Geemen,
\newblock{\em Abelian surfaces with an automorphism and quaternionic multiplication,}
\newblock{Canadian J. Math.,  2015, to appear. }


\bibitem[CD]{CD} A. Clingher and C. Doran,
\newblock{\em Lattice polarized $K3$ surfaces and Siegel modular forms,}
\newblock{Adv. Math., {\bf 231}, 2012, 172-212.}



\bibitem[CLO]{CLO} D. Cox, J. Little and D. O'Shea,
\newblock{\em Using algebraic geometry,}
\newblock{Springer, 1998.}


\bibitem[D]{Dolgachev} I. V. Dolgechev,
\newblock{\em Mirror symmetry for lattice polarized $K3$ surfaces,}
\newblock{J. Math. Sci., {\bf 81} (3),  1996, 2599-2630.}


\bibitem[E1]{Elkies1} N. Elkies,
\newblock{\em Shimura curve computations}
\newblock{Algorithmic Number Theory, Lect. Notes in Computer Sci., {\bf 1423}, Springer, 1998, 1-47. }



\bibitem[E2]{Elkies2} N. Elkies,
\newblock{\em Shimura curve computations via $K3$ surfaces of N\'eron-Severi rank at least $19$, }
\newblock{Proc. of the 8th conference on Algorithmic Number Theory, Springer, 2008, 196-211.}



\bibitem[EK]{EK} N. Elkies and A. Kumar,
\newblock{\em $K3$ Surfaces and equations for Hilbert modular surfaces, }
\newblock{arXiv:1209.3527v2.}



\bibitem[G]{Geer}
 G. van der Geer,
\newblock{\em Hilbert modular surfaces},
\newblock{Ergebnisse der Math. und ihrer Grenzgebiete 3-Folge {\bf 16}, Springer,  1988.}






\bibitem[HM]{HM}
 K. Hashimoto and N. Murabayashi,
\newblock{\em Shimura curves as intersections of  Humbert surfaces and defining equations  of  QM-curves of genus two},
\newblock{Tohoku  Math. J., {\bf 47},  1995, 271-296.}


\bibitem[HNU]{HNU}
K. Hashimoto, A. Nagano and K. Ueda, 
\newblock{\em  Modular surfaces associated with toric $K3$ surfaces}, 
\newblock{arXiv:1403.5818, prerprint, 2014. }


\bibitem[Ha]{Hashimoto}
 K. Hashimoto,
\newblock{\em Explicit forms of quaternion modular embeddings},
\newblock{Osaka   J. Math., {\bf 32}, 1995, 533-546.}


\bibitem[Hi]{Hirzebruch}F. Hirzebruch,
\newblock {\em The ring of Hilbert modular forms for real quadratic fields of small discriminant,}
\newblock {Lecture Notes in Math. {\bf 627}, Springer-Verlag, 1977, 287-323.}


 \bibitem[Hu]{Humbert} G. Humbert,
\newblock{\em Sur les fonctions ab\'{e}liennes singuli\`{e}res,}
\newblock{Oeuvres de G. Humbert 2, pub. par les soins de Pierre Humbert et de Gaston Julia, Gauthier-Villars, 297-401, 1936.}


\bibitem[Kl]{Klein}F. Klein,
\newblock{\em Vorlesungen \"{u}ber das Ikosaeder und die Aufl\"{o}sung der Gleichungen vom f\"{u}nften Grade,}
\newblock{Tauber, 1884.}


\bibitem[Kum]{Kumar}
A. Kumar,
\newblock{\em $K3$ surfaces associated to curves of genus two,}
\newblock{Int. Math. Re. Not., {\bf 16}, 2008, ArticleID: rnm165.}


\bibitem[Kur]{Kurihara}
A. Kurihara,
\newblock{\em On some examples of equations defining Shimura curves and the Mumford uniformization,}
\newblock{J. Fac. Sci. Univ. Tokyo {\bf 25}, 1979, 277-301.}

\bibitem[KV]{KohelVerrill}
D. Kohel and H. Verrill,
\newblock{\em Fundamental Domains for Shimura Curves,}
\newblock{J. Th\'eor. Nombres Bordeaux, {\bf 15}, 2003.}




\bibitem[M]{Muller}
R. M\"uller
\newblock{\em Hilbertsche Modulformen und Modulfunctionen zu $\mathbb{Q}(\sqrt{5})$,}
\newblock{Arch. Math. {\bf 45}, 1985, 239-251}


\bibitem[N1]{NaganoPDE} 
A. Nagano,
\newblock {\em Period differential equations for the families of $K3$ surfaces with two parameters derived from the reflexive polytopes,}
\newblock {Kyushu J. Math., {\bf 66} (1), 2012, 193-244.}


\bibitem[N2]{NaganoTheta} 
A. Nagano,
\newblock {\em A theta expression of the Hilbert modular functions for $\sqrt{5}$ via period of $K3$ surfaces,}
\newblock {Kyoto J. Math., {\bf 53} (4), 2013, 815-843.}

\bibitem[N3]{NaganoKummer} 
A. Nagano,
\newblock {\em Double integrals on a weighted projective plane and the Hilbert modular functions for $\mathbb{Q}(\sqrt{5})$,}
\newblock {Acta Arith., {\bf 167} (4), 2015, 327-345.}

\bibitem[N4]{NaganoCM}
A. Nagano,
\newblock {\em Icosahedral invariants and CM points and class fields,}
\newblock {preprint, 2015, arXiv:1504.07500.}

\bibitem[NS]{NaganoShiga} 
A. Nagano and H. Shiga,
\newblock {\em Modular map for the family of abelian surfaces via elliptic $K3$ surfaces,}
\newblock {Math. Nachr., {\bf 288} (1), 89-114, 2015.}



\bibitem[R1]{Rotger}
V. Rotger,
\newblock{\em Shimura curves embedded in Igusa's threefold,}
\newblock{Modular curves and abelian varieties (Progress in Math. {\bf 224}), Birkh\"auser,  2004, 263-276.}


\bibitem[R2]{Rotger2}
V. Rotger,
\newblock{\em Modular Shimura varieties and forgetful maps,}
\newblock{Trans. Amer. Math. Soc.  {\bf 356},  2004, 1535-1550.}


\bibitem[S1]{S67} 
G. Shimura,
\newblock {\em Construction of class fields and zeta functions of algebraic curves,}
\newblock {Ann. of Math., {\bf 85}, 1967, 58-159.}


\bibitem[S2]{S70}G. Shimura,
\newblock {\em On canonical models of arithmetic quotients of bounded symmetric domains I,}
\newblock {Ann. of Math. {\bf 91}, 1970, 144-222.}



\bibitem[S3]{Shimura} 
G. Shimura,
\newblock {\em Introduction to the arithmetic theory of automorphic functions,}
\newblock {Publ. Math. Soc. Japan {\bf 11}, 1971.}



\bibitem[S4]{S75}G. Shimura,
\newblock {\em On the real points of an arithmetic quotient of a bounded symmetric domain,}
\newblock {Math. Ann. {\bf 215}, 1975, 135-164.}



\bibitem[S5]{S97}G. Shimura,
\newblock {\em Abelian Varieties with Complex Multiplication and Modular Functions,}
\newblock {Princeton Univ. Press, 1997.}


\bibitem[SW]{SW} 
H. P. F. Swinnerton-Dyer,
\newblock {\em Analytic Theory of Abelian Varieties,}
\newblock {London Mathematical Societiy Lecture Note Series {\bf 14}, 1974.}


\bibitem[Vi]{Vigneras}
M. F. Vign\'eras,
\newblock{\em Arithm\'etiques des alg\'ebres de quaternions,}
\newblock{Lec. Note. Math. {\bf 800}, Springer, 1980.}


\bibitem[Vo]{Voight}
J. Voight,
\newblock{\em Shimura Curve Computations,}
\newblock{Arithmetic Geometry (Clay Math. Proc. {\bf 8}), 2009, 103-113.}


\bibitem[Y]{Yang}
Y. Yang,
\newblock{\em Quaternionic loci in Siegel's modular threefolds,}
\newblock{http//www.tims.ntu.edu.tw/download.talk.Summary.pdf, 2015.}


\end{thebibliography}
\end{document}